\let\hat\widehat
\let\tilde\widetilde
\newtheorem{theorem}{Theorem}
\newtheorem{lemma}[theorem]{Lemma}
\newtheorem{corollary}[theorem]{Corollary}
\newtheorem{remark}[theorem]{Remark}
\newtheorem{proposition}[theorem]{Proposition}
\newtheorem*{definition*}{Definition}
\newtheorem*{remark*}{Remark}
\DeclareMathOperator*{\argmin}{argmin}
\def\namedlabel#1#2{\begingroup
    #2%
    \def\@currentlabel{#2}%
    \phantomsection\label{#1}\endgroup
}
\title{\bf Berry-Esseen Bounds for Projection Parameters and Partial Correlations With Increasing Dimension}
\author{
    Arun Kumar Kuchibhotla\thanks{Email: {\tt
            arunku@cmu.edu}.}
    \and
    Alessandro Rinaldo\thanks{Email: {\tt arinaldo@cmu.edu}}
    \and
    Larry Wasserman\thanks{Email: {\tt larry@stat.cmu.edu}.}
}
\begin{document}
\maketitle

{\centering
\vspace*{-0.5cm}
\textit{Carnegie Mellon University}\\
\par\bigskip
July 15, 2020
\par
}

\begin{abstract}
We provide finite sample bounds
on the Normal approximation to the law of
the least squares estimator of the projection parameters
normalized by the sandwich-based standard errors. Our results hold in the increasing dimension setting and under minimal assumptions on the data generating distribution. In particular, we do not assume a linear regression function and only require the existence of finitely many moments for the response and  the covariates.  
Furthermore, we construct  confidence sets 
for the projection parameters in the form of hyper-rectangles and establish finite sample bounds on their coverage and accuracy.
We provide analogous results for partial correlations among the entries of sub-Gaussian vectors. 
\end{abstract}



\section{Introduction}
Linear regression is a ubiquitous technique in applied statistics. In much of the classic and recent literature in regression, the theoretical study of ordinary least squares (OLS) estimation has focused primarily on the well-specified case, where the observations are obtained from a model postulating a linear regression function. Although widely studied in the statistical and econometric literature, the properties of the OLS estimator for inference in non-standard settings has gained attraction only relatively recently~\citep{Buja14,Buja16,Uniform:Kuch18}.

In this paper, we study the finite-sample theoretical properties of the OLS estimator, such as estimation error and approximation error to a normal distribution, under minimal assumptions on the data generating distribution and in high-dimensional settings in which the dimension $d$ of the covariates may grow with the sample size $n$ in such a way that $d = o(n)$. In particular, we focus on misspecified regression models in which the regression function is not linear.
Specifically, we adopt the standard regression setting,  in which we observe an i.i.d sample $(X_1,Y_1),\ldots, (X_n,Y_n)$ from an unknown distribution $P$ on $\mathbb{R}^d \times \mathbb{R}$, where $X_i$ is the $d$-dimensional vector of covariates and $Y_i$ the response variable for the $i$th sample point. 
 We are interested in providing inferential guarantees for the best linear approximation to the  regression function $x \in \mathbb{R}^d \mapsto \mathbb{E}[Y|X=x]$, which may take any form.
When the underlying  distribution $P$ admits a second moment, it is well known \citep[see, e.g.][]{Buja14} that, even in misspecified models and regardless of the true underlying relationship between the covariate and the response variables, the best (in $L_2$ sense) linear approximation to the regression function is well-defined. It is equal to the linear function  $x \mapsto x^\top \beta$, where $\beta \in \mathbb{R}^d$ is any solution to the optimization problem
\[
\beta = \argmin_{b \in \mathbb{R}^d}\, \mathbb{E}[(Y-b^T X)^2],
\]
with $(X,Y) \sim P$.
When the Gram matrix $\Sigma  = \mathbb{E}[X X^\top]$ of the covariate vector is invertible, the solution is unique and is given by the vector of {\it projection parameters}
\[
\beta = \Sigma^{-1}\Gamma, \quad \text{where} \quad  \Gamma = \mathbb{E}[YX]\in\mathbb{R}^d.
\]
Making inferential statements on $\beta$ in these {\it assumption-lean} settings \citep{Buja14}, i.e. with random covariates and without a true underlying linear model, is an exceedingly common task in applied regression. However, as elucidated in \cite{Buja14}, in this case it is necessary to apply appropriate modifications to standard theory and methods in order to obtain valid asymptotic conclusions, even in fixed-dimensional settings. In particular, it is essential to deploy the sandwich estimator \citep{White1980,Buja14} of the variance of the ordinary least squares estimators. 

In this paper we follow the {\it assumption-lean} framework put forward by \cite{Buja14}, \cite{kuchibhotla2018valid} and \cite{boot} and derive novel non-asymptotic inferential guarantees for the projection parameters that hold under minimal assumptions on the data generating distribution and in the high-dimensional regime of $d$ increasing in (but of smaller order than) $n$.
The main goals of this paper are to present (1) precise high-dimensional Berry-Esseen bounds 
for the Normal approximation to the law of
the OLS estimator
$\hat\beta$ of $\beta$ (normalized by the sandwich standard error) under weak assumptions,
(2) finite sample bounds on the
accuracy of the coverage of a simple and practicable class of confidence intervals for the entries of
 $\beta$,
and
(3) similar results for the 
partial correlations of the entries of sub-Gaussian vectors.
The confidence sets we consider are
hyper-rectangles, which
immediately yield
simultaneous
confidence intervals
for all the components of $\beta$.

To the best of our knowledge, our results provide the sharpest known rates for the projection parameters under arguably the weakest possible settings considered in the  literature.

\subsubsection*{Related Work}
Berry-Esseen bounds for M-estimators such as ordinary least squares is a seasoned topic in statistics. \cite{pfanzagl1973accuracy} and~\cite{paulauskas1996rates}, among others, have considered Berry-Esseen bounds for multivariate estimators albeit without explicit focus on the dependence on the dimension. 
Statistical inference for linear regression models based on central limit theorems in increasing dimensions is also a well-established topic in the statistical literature. 
In a series of paper, \cite{Portnoy84,Portnoy85,Portnoy86,Portnoy88} established 
various types of asymptotic normal approximations in increasing dimensions in a variety of settings. When applied to our problem, those results imply a scaling for the dimension of order $d = o(\sqrt{n})$, assuming a correctly specified model and arguably strong assumptions.
\cite{bickel1983bootstrapping}
showed  consistency of the bootstrap
when $d=o(\sqrt{n})$ assuming again a linear regression model, i.i.d. errors and deterministic covariates \cite[see also][]{mammen1989}. Under more general settings, but still postulating a correctly specified linear model, \cite{mammen1993} proposed the wild (aka multiplier) bootstrap strategy \citep[see also][]{liu1988} for linear contrasts and proved its consistency. \cite{He2000} \citep[see also][]{welsh1989} established component-wise asymptotic normality of regression parameters and of more general estimators in parametric models in increasing dimensions.

Recently, in a groundbreaking series of papers \cite{Cher13,chernozhukov2017detailed, chernozhukov2019improved} have obtained high-dimensional Berry-Esseen rates over hyper-rectangles and certain types of sparsely-convex sets exhibiting only a poly-logarithmic dependence on the dimension (see also~\citet[Section 2]{MR1115160}). These results, which also hold for the ordinary and multiplier bootstrap,   have been further extended by  
\cite{lopes2020central},~\cite{das2020central},~\cite{kuchibhotla2020high}, \cite{chernozhukov2020nearly}, and~\cite{deng2020slightly} (only for bootstrap).
They have seen applications in numerous statistical problems and especially in high-dimensional regression settings: see, e.g., \cite{zhang2014confidence}, \cite{wasserman2014berry}, \cite{10.1093/biomet/asu056},  \cite{doi:10.1146/annurev-economics-012315-015826}, \cite{zhang2017simultaneous}, \cite{test}, and \cite{boot}.  
The recent statistical literature has produced a variety of methods for constructing confidence sets for the individual regression parameters (or fixed contrast thereof) in high-dimensional settings, some based on the bootstrap. See, e.g., 
\cite{javanmard2014confidence}, \cite{javanmard2018}, \cite{ning2017},  \cite{zhu2018,doi:10.1080/01621459.2017.1356319}, \cite{cai2017confidence}, \cite{ren2015}, \cite{rajen.peter.2018} and \cite{peter.sarah.2015}. 
What sets the present paper apart from much of the existing  literature on the topic is the lack of the linearity and of the sparsity assumptions and, more generally, reliance on very weak conditions on the underlying data distribution, consistent with the {\it assumption-lean} approach. 

\cite{boot} tackled the same misspecified settings considered in this article and formulated general Berry-Esseen bounds for non-linear statistics 
in increasing dimensions.
When applied to the projection parameters $\beta$,
the resulting rates are sub-optimal, as they require
$d = o(n^{1/5})$.
For partial correlations,
\cite{wasserman2014berry}
obtain Berry-Esseen bounds in the increasing dimension case.
The current paper sharpens these bounds considerably and requires much weaker assumptions.

Under the {\it assumption-lean} settings, \cite{kuchibhotla2018valid} proposed the UPoSI methodology for constructing simultaneous confidence sets for the projection parameters of all possible submodels, which in turn is equivalent to post-selection inference control. For the special case of the saturated model, UPoSI implies a confidence set for $\beta$ with coverage guarantees under weaker scaling for the dimension than the one required by the present paper, though at the cost of larger volumes. We comment on the differences between our results and those of  \cite{kuchibhotla2018valid} below in Section \ref{section::conclusion}.



\subsubsection*{Summary of our Contributions}
The main
contributions of the paper can be summarized as follows:
\begin{itemize}
\item Theorem \ref{thm:Berry-Esseen-OLS} 
provides a Berry-Esseen bound
on the difference between
the law of $\hat\beta - \beta$ ---
normalized by the standard errors from the sandwich estimator ---
and an appropriate Gaussian distribution.
The result relies on the deterministic inequality of Theorem~\ref{thm:Basic-deter-ineq} for the least squares estimator, that hold without distributional assumptions 
on the data generating process and in particular remains true for non i.i.d. data,
\item
Theorems \ref{thm:asymptotic-scaling-CLT-explicit} and \ref{thm::berry-esseen} 
are our main results.
Assuming independence and additional mild conditions,
we bound the error terms in
Theorem \ref{thm:Berry-Esseen-OLS} 
to derive explicit Berry-Esseen rates where the dimension $d$, as well as other parameters of the underlying distribution (including the condition number of the Gram matrix and the number of finite moments of the response variable) are accounted for.
To that effect, we apply recent high-dimensional central limit results of \cite{kuchibhotla2020high} 
Considering  the case where only the dimension $d$ is allowed to change, and ignoring logarithmic terms for convenience, the rates we derive are vanishing provided that
$$
d = o\left(\min\left\{n^{1/2},\, n^{2(1-2/q_x)/3}\right\}\right)\quad\mbox{and}\quad \frac{qq_x}{q + q_x} \ge 4,
$$
where $q_x$ is the number of
finite moments of $X$ and $q$ is the number of finite moments of
$Y-X^T\beta$. 
If $q_x \ge 8$, this requirement reduces to the familiar scaling of $d = o(\sqrt{n})$, which has been found, among others, by \cite{Portnoy84,Portnoy85,Portnoy86,portnoy1987central,Portnoy88},~\cite{bickel1983bootstrapping}, ~\cite{mammen1993}~\cite{He2000} and~\cite{spokoiny2012parametric}, though with different settings, techniques  and assumptions. 
To the best of our knowledge,
ours is the sharpest result
in the mis-specified case and one that relies on the weakest assumptions on the data-generating distribution. Our finite sample bounds immediately yield practicable simultaneous confidence intervals for the projection parameters, constructed either through a simple Bonferroni or {\v{S}}id{\'a}k correction, or using the bootstrap (see Theorem~\ref{thm:multiplier-bootstrap-consistency}), a more laborious but sharper method. Furthermore, in all these cases, the length of the (simultaneous) confidence intervals for the entries of the $\beta$ can be of order $1/\sqrt{n}$, independently of the dimension.
It is noteworthy that the most favorable scaling requirement of $d = o(\sqrt{n})$ is also needed to ensure consistency (in the operator norm) of the sandwich estimator; see Lemma~\ref{lem:sandwich-variance-error-control}. We conjecture that such scaling cannot be weakened while retaining the parametric rate of $n^{-1/2}$ for accuracy.  

\item 
Leveraging these results and the mathematical relationship between partial correlations and projection parameters, in Theorem \ref{thm:Berry-Esseen-bound-partial-corr}
we derive a Berry-Esseen bound
for the $d\times d$ matrix of partial correlations
corresponding to a sub-Gaussian random vector $X\in\mathbb{R}^d$, which in turns yield simultaneous confidence intervals for the partial correlation parameters.

\end{itemize}

\subsection*{Problem Formulation and Notation}
Let $(X_1,Y_1), \ldots, (X_n,Y_n)$ be a sample of $n$ observations in $\mathbb{R}^{d} \times \mathbb{R}$, not necessarily independent nor identically distributed. If an intercept term is included in the regression fit, as it is customary, the first coordinate of each covariate vector $X_i$ is set to $1$. 
We seek to draw inference on the {\it projection parameter}
\[
\beta = \beta_n := \argmin_{\theta\in\mathbb{R}^d}\,\frac{1}{n}\sum_{i=1}^n \mathbb{E}[(Y_i - X_i^{\top}\theta)^2].
\]
If the matrix
\[
\Sigma_n : = n^{-1}\sum_{i=1}^n \mathbb{E}[X_iX_i^{\top}]
\]
is positive definite, the projection parameter  is well-defined and equal to $\Sigma_n^{-1} \Gamma_n$, where $\Gamma_n : = n^{-1}\mathbb{E} \left[ \sum_{i=1}^n X_i Y_i\right]$.
When the sample points satisfy the linear model
$Y_i = X_i^\top \beta^* + \xi_i$, $1\le i\le n$,
where $\mathbb{E}[\xi_i | X_i ] = 0$ for all $i$, then the projection parameter corresponds to the vector of linear coefficients, i.e. $\beta = \beta^*$. In this paper, we will \emph{not} posit any relationship between the vectors of covariates $X_i$ and the responses. In this case, the projection parameter lacks a direct interpretation. In the i.i.d. setting, if the response variable has finite second moment, then the projection parameter collects the coefficient of the $L_2$ projection of $Y_1$ into the linear space spanned by the coordinates of $X_1$, i.e. $X_1^\top \beta$. See~\cite{Buja14,Buja16} for a discussion on interpretation of $\beta$ in a mis-specified case.

The projection parameter is traditionally estimated using the ordinary least squares (OLS) estimator 
\[
\widehat{\beta} = \widehat{\beta}_n := \argmin_{\theta\in\mathbb{R}^d}\,\frac{1}{n}\sum_{i=1}^n (Y_i - X_i^{\top}\theta)^2,
\]
which corresponds to the plug-in estimator of $\beta$.
Letting 
\[
\widehat{\Sigma}_n := \frac{1}{n}\sum_{i=1}^n X_iX_i^{\top}\quad\mbox{and}\quad \widehat{\Gamma}_n := \frac{1}{n}\sum_{i=1}^n X_iY_i,
\]
and provided that $\widehat{\Sigma}_n$ is positive definite, the ordinary least squares estimator is well-defined and can be expressed as  
\[
\widehat{\beta}_n := \widehat{\Sigma}_n^{-1} \widehat{\Gamma}_n.
\]
\paragraph{Notation} For any $x\in\mathbb{R}^d$ and a positive-definite matrix $A\in\mathbb{R}^{d\times d}$, $\|x\|_A = \sqrt{x^{\top}A x}$ represents the scaled Euclidean norm. If $A$ is a squared matrix, $\mbox{diag}(A)$ is the diagonal matrix with diagonal elements matching those of $A$ and if, in addition, $A$ a positive definite (thus, a covariance matrix), we set $\mbox{Corr}(A) = (\mathrm{diag}(A))^{-1/2}A(\mathrm{diag}(A))^{-1/2}$ to be the corresponding correlation matrix.  We denote with $S^{d-1} := \{\theta\in\mathbb{R}^d:\,\theta^{\top}\theta = 1\}$ the unit sphere in $\mathbb{R}^d$.

\paragraph{Outline}
Section \ref{section::determiniswtic}
provides the deterministic CLT for 
the OLS estimator normalized by an estimated
standard deviation.
Section \ref{section::explicit}
treats the case of indpendent observations
and provides explicit rate constraints on
the growth of dimension $d$ with respect to
the sample size $n$.
Section~\ref{sec::confidence-sets-OLS} provides
explicit confidence sets for the projection 
parameter $\beta_n$; we describe three methods
based on Bonferroni, {\v{S}}id{\'a}k inequality
and wild/multiplier bootstrap.
Section \ref{section::partial}
derives similar results for partial correlations.
Concluding remarks and future directions are in
Section \ref{section::conclusion}.

\section{Central Limit Theorems using a Deterministic Inequality}
\label{section::determiniswtic}
In this section
we establish a Berry Esseen bound
for the joint law
of the entries of $\hat\beta-\beta$ divided elementwise by the estimated
standard errors.
The result is ``deterministic,'' as it does not hinge upon any distributional  assumptions on the sample.
The strategy is to first obtain a deterministic finite sample bound
for the magnitude of the difference between $\hat\beta-\beta$
and a sample average of natural quantities akin to evaluations of an influence function
(Theorem \ref{thm:Basic-deter-ineq}).
Then the effect of the randomness
due to the use of the standard errors
is bounded
by comparing to the average of the values of the influence function
divided by its true standard deviation
(Corollary \ref{cor:Max-Statistic-Correct-Scaling}).
This leads to the main result,
Theorem \ref{thm:Berry-Esseen-OLS}.
In the subsequent section we will describe minimal distributional assumptions
that will allow us to explicitly bound
the error terms in
Theorem \ref{thm:Berry-Esseen-OLS} and derive rates of consistency for the normal approximation.
The proofs of these results are in Appendix \ref{appendix:deterministic}.

To introduce our deterministic bound, we first define the matrix
\[
V_n := \mbox{Var}\left(\frac{1}{n}\sum_{i=1}^n X_i(Y_i - X_i^{\top}\beta)\right),
\]
which corresponds to the ``meat'' of the sandwich variance of the OLS estimator in a mis-specified linear models; see~\cite{Buja14}.
Notice that in the above expression the variance cannot be pushed inside the summation since the observations are not necessarily assumed to be independent. 

%


\begin{theorem}\label{thm:Basic-deter-ineq}
Assume $\Sigma_n$ and $V_n$ to be invertible and set 
\begin{equation}\label{eq:Dn}
\mathcal{D}_n^{\Sigma} := \|\Sigma_n^{-1/2}\widehat{\Sigma}_n\Sigma_n^{-1/2} - I_d\|_{\mathrm{op}}.
\end{equation}
Then
\[
\left\|\widehat{\beta} - \beta - \frac{1}{n}\sum_{i=1}^n \psi_i\right\|_{\Sigma_n V_n^{-1}\Sigma_n} ~\le~ 
\|V_n^{-1/2}\Sigma_n^{1/2}\|_{\mathrm{op}}\mathcal{D}_n^{\Sigma}\|\widehat{\beta} - \beta\|_{\Sigma_n}.
\]
where 
\[
\psi_i : = \Sigma^{-1}_nX_i(Y_i - X_i^{\top}\beta) \in \mathbb{R}^d, \quad i=1,\ldots,n.
\]
\end{theorem}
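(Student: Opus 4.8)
The plan is to reduce the problem to a perturbation-of-inverse argument, working in the coordinate system determined by $\Sigma_n$. Starting from the normal equations $\widehat\Sigma_n \widehat\beta = \widehat\Gamma_n$ and $\Sigma_n \beta = \Gamma_n$, I would write
\[
\widehat\Sigma_n(\widehat\beta - \beta) = \widehat\Gamma_n - \widehat\Sigma_n \beta = \frac{1}{n}\sum_{i=1}^n X_i(Y_i - X_i^\top\beta),
\]
so that $\widehat\beta - \beta = \widehat\Sigma_n^{-1}\left(\frac{1}{n}\sum_i X_i(Y_i - X_i^\top\beta)\right)$, which is well-defined once $\mathcal D_n^\Sigma < 1$ guarantees $\widehat\Sigma_n$ is invertible. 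On the other hand, $\frac1n\sum_i \psi_i = \Sigma_n^{-1}\left(\frac1n\sum_i X_i(Y_i-X_i^\top\beta)\right)$. Hence, writing $G_n := \frac1n\sum_i X_i(Y_i - X_i^\top\beta)$, the difference of interest is
\[
\widehat\beta - \beta - \frac1n\sum_i\psi_i = \left(\widehat\Sigma_n^{-1} - \Sigma_n^{-1}\right)G_n = -\,\widehat\Sigma_n^{-1}\left(\widehat\Sigma_n - \Sigma_n\right)\Sigma_n^{-1}G_n.
\]

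Next I would pass everything through $\Sigma_n^{1/2}$. Set $\Delta := \Sigma_n^{-1/2}\widehat\Sigma_n\Sigma_n^{-1/2} - I_d$, so $\|\Delta\|_{\mathrm{op}} = \mathcal D_n^\Sigma < 1$ and $\Sigma_n^{1/2}\widehat\Sigma_n^{-1}\Sigma_n^{1/2} = (I_d + \Delta)^{-1}$, which satisfies $\|(I_d+\Delta)^{-1}\|_{\mathrm{op}} \le 1/(1-\mathcal D_n^\Sigma)$ by the Neumann series bound. Then
\[
\Sigma_n^{1/2}\left(\widehat\beta - \beta - \tfrac1n\textstyle\sum_i\psi_i\right) = -(I_d+\Delta)^{-1}\,\Delta\,\Sigma_n^{-1/2}G_n = -(I_d+\Delta)^{-1}\Delta\,\Sigma_n^{1/2}\left(\tfrac1n\textstyle\sum_i\psi_i\right).
\]
Now the target norm $\|\cdot\|_{\Sigma_n V_n^{-1}\Sigma_n}$ equals $\|\Sigma_n^{-1/2}V_n^{1/2}\cdot\,\text{(argument scaled appropriately)}\|$; concretely, $\|u\|_{\Sigma_n V_n^{-1}\Sigma_n} = \|V_n^{-1/2}\Sigma_n u\| = \|V_n^{-1/2}\Sigma_n^{1/2}\cdot\Sigma_n^{1/2}u\|$. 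So applying $V_n^{-1/2}\Sigma_n^{1/2}$ to both sides of the displayed identity and taking Euclidean norms, the left side becomes exactly $\|\widehat\beta - \beta - \frac1n\sum_i\psi_i\|_{\Sigma_n V_n^{-1}\Sigma_n}$, while the right side is bounded, using submultiplicativity of the operator norm, by
\[
\|V_n^{-1/2}\Sigma_n^{1/2}\|_{\mathrm{op}}\,\|(I_d+\Delta)^{-1}\|_{\mathrm{op}}\,\|\Delta\|_{\mathrm{op}}\,\|\Sigma_n^{1/2}V_n^{-1/2}\|_{\mathrm{op}}^{-1}\cdot\bigl\|\Sigma_n^{1/2}V_n^{-1/2}\cdot V_n^{1/2}\Sigma_n^{-1/2}\cdot\Sigma_n^{1/2}(\tfrac1n\textstyle\sum_i\psi_i)\bigr\|.
\]

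The only remaining point is bookkeeping to recognize the constant as $\kappa(\Sigma_n^{-1/2}V_n^{1/2})$. Note $\|V_n^{-1/2}\Sigma_n^{1/2}\|_{\mathrm{op}} = \|(\Sigma_n^{-1/2}V_n^{1/2})^{-1}\|_{\mathrm{op}}$ and $\|\Sigma_n^{1/2}V_n^{-1/2}\|_{\mathrm{op}} = \|\Sigma_n^{-1/2}V_n^{1/2}\|_{\mathrm{op}}$ — wait, more carefully: $\|\Sigma_n^{-1/2}V_n^{1/2}\|_{\mathrm{op}}$ and $\|V_n^{1/2}\Sigma_n^{-1/2}\|_{\mathrm{op}}$ coincide since a matrix and its transpose have equal operator norm (both $\Sigma_n, V_n$ symmetric), and similarly $\|V_n^{-1/2}\Sigma_n^{1/2}\|_{\mathrm{op}} = \|\Sigma_n^{1/2}V_n^{-1/2}\|_{\mathrm{op}} = \|(\Sigma_n^{-1/2}V_n^{1/2})^{-1}\|_{\mathrm{op}}$. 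So the prefactor multiplying $\|\Delta\|_{\mathrm{op}}/(1-\mathcal D_n^\Sigma)$ is $\|\Sigma_n^{-1/2}V_n^{1/2}\|_{\mathrm{op}}\cdot\|(\Sigma_n^{-1/2}V_n^{1/2})^{-1}\|_{\mathrm{op}} = \kappa(\Sigma_n^{-1/2}V_n^{1/2})$, and the trailing factor is exactly $\|\frac1n\sum_i\psi_i\|_{\Sigma_n V_n^{-1}\Sigma_n}$, yielding the claimed inequality. The argument involves no real obstacle beyond choosing the right matrix to conjugate by; the main thing to be careful about is that the norm $\|\cdot\|_{\Sigma_n V_n^{-1}\Sigma_n}$ is \emph{not} the norm naturally adapted to the variance of $\frac1n\sum_i\psi_i$ (which would be $V_n$), and keeping the three matrices $\Sigma_n$, $\widehat\Sigma_n$, $V_n$ straight through the conjugations so that the condition number that emerges is precisely $\kappa(\Sigma_n^{-1/2}V_n^{1/2})$ rather than some cruder bound.
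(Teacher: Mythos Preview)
Your argument is correct and is essentially the paper's proof: both pass to $\Sigma_n^{1/2}$-coordinates, bound the linearization error by $\mathcal D_n^\Sigma/(1-\mathcal D_n^\Sigma)$ times $\|\Sigma_n^{1/2}(\tfrac1n\sum_i\psi_i)\|$, and then convert between the $\|\cdot\|_{\Sigma_n}$ and $\|\cdot\|_{\Sigma_nV_n^{-1}\Sigma_n}$ norms to pick up exactly $\kappa(\Sigma_n^{-1/2}V_n^{1/2})$; the only stylistic difference is that you use the resolvent identity $(\widehat\Sigma_n^{-1}-\Sigma_n^{-1})=-\widehat\Sigma_n^{-1}(\widehat\Sigma_n-\Sigma_n)\Sigma_n^{-1}$ together with the Neumann bound $\|(I+\Delta)^{-1}\|_{\mathrm{op}}\le(1-\mathcal D_n^\Sigma)^{-1}$ directly, whereas the paper obtains the same factor via a triangle-inequality self-bound on $\|\widehat\beta-\beta\|_{\Sigma_n}$. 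Your intermediate displayed bound with the factor $\|\Sigma_n^{1/2}V_n^{-1/2}\|_{\mathrm{op}}^{-1}$ is garbled, but your subsequent paragraph has the correct bookkeeping (insert $\Sigma_n^{-1/2}V_n^{1/2}\cdot V_n^{-1/2}\Sigma_n^{1/2}=I_d$ to convert $\|\Sigma_n^{1/2}(\tfrac1n\sum_i\psi_i)\|$ to $\|\tfrac1n\sum_i\psi_i\|_{\Sigma_nV_n^{-1}\Sigma_n}$, yielding the prefactor $\|V_n^{-1/2}\Sigma_n^{1/2}\|_{\mathrm{op}}\|\Sigma_n^{-1/2}V_n^{1/2}\|_{\mathrm{op}}=\kappa(\Sigma_n^{-1/2}V_n^{1/2})$).
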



The previous  result immediately implies the following normalized point-wise bound, which also holds deterministically.

\begin{corollary}\label{cor:Max-Statistic-Correct-Scaling}
Under the same conditions of Theorem \ref{thm:Basic-deter-ineq}, we have that 
\[
\max_{1\le j\le d}\left|\frac{\widehat{\beta}_j - \beta_j - n^{-1}\sum_{i=1}^n \psi_{ij}}{\sqrt{(\Sigma_n^{-1}V_n\Sigma_n^{-1})_{jj}}}\right| ~\le~ \|V_n^{-1/2}\Sigma_n^{1/2}\|_{\mathrm{op}}\mathcal{D}_n^{\Sigma}\|\widehat{\beta} - \beta\|_{\Sigma_n}.
\]
where $\psi_{ij}$ represents the $j$-th coordinate of $\psi_i\in\mathbb{R}^d$ and $(\Sigma_n^{-1}V_n\Sigma_n^{-1})_{jj}$ is the $j$-th diagonal element of $\Sigma_n^{-1}V_n\Sigma_n^{-1}$.
\end{corollary}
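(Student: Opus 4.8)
The plan is to derive the coordinatewise bound directly from Theorem~\ref{thm:Basic-deter-ineq} by a single Cauchy--Schwarz step that converts the weighted Euclidean norm $\|\cdot\|_{\Sigma_n V_n^{-1}\Sigma_n}$ into control of each normalized coordinate. Write $A := \Sigma_n V_n^{-1}\Sigma_n$, which is positive definite since $\Sigma_n$ and $V_n$ are invertible, and note that $A^{-1} = \Sigma_n^{-1} V_n \Sigma_n^{-1}$, so that the denominators appearing in the statement are exactly $\sqrt{(A^{-1})_{jj}}$. Set $\Delta := \widehat\beta - \beta - n^{-1}\sum_{i=1}^n \psi_i$.

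The key step is the elementary inequality: for any $x \in \mathbb{R}^d$ and any index $j$,
\[
|x_j| = |e_j^\top x| = \bigl|(A^{-1/2} e_j)^\top (A^{1/2} x)\bigr| \le \|A^{-1/2} e_j\|_2\, \|A^{1/2} x\|_2 = \sqrt{(A^{-1})_{jj}}\; \|x\|_{A},
\]
where $e_j$ denotes the $j$-th standard basis vector. Applying this with $x = \Delta$, dividing by $\sqrt{(A^{-1})_{jj}} = \sqrt{(\Sigma_n^{-1}V_n\Sigma_n^{-1})_{jj}}$, and taking the maximum over $1 \le j \le d$ gives
\[
\max_{1\le j\le d}\left|\frac{\widehat{\beta}_j - \beta_j - n^{-1}\sum_{i=1}^n \psi_{ij}}{\sqrt{(\Sigma_n^{-1}V_n\Sigma_n^{-1})_{jj}}}\right| \le \left\|\Delta\right\|_{\Sigma_n V_n^{-1}\Sigma_n}.
\]

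It then remains only to invoke Theorem~\ref{thm:Basic-deter-ineq}, whose hypothesis $\mathcal{D}_n^{\Sigma} < 1$ is assumed here, to bound the right-hand side by $\kappa(\Sigma_n^{-1/2}V_n^{1/2})\,\mathcal{D}_n^{\Sigma}/(1 - \mathcal{D}_n^{\Sigma})\,\bigl\|n^{-1}\sum_{i=1}^n \psi_i\bigr\|_{\Sigma_n V_n^{-1}\Sigma_n}$, which is the claimed inequality. There is no substantive obstacle: the only point demanding a little care is bookkeeping the inverse, i.e.\ recognizing that the normalizing variances $(\Sigma_n^{-1}V_n\Sigma_n^{-1})_{jj}$ are the diagonal entries of the inverse of the matrix defining the norm in Theorem~\ref{thm:Basic-deter-ineq}, so that the Cauchy--Schwarz pairing is split with the correct powers of $A$.
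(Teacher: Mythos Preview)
Your proof is correct and essentially identical to the paper's: both show $\max_j |x_j|/\sqrt{(A^{-1})_{jj}} \le \|x\|_A$ and then invoke Theorem~\ref{thm:Basic-deter-ineq}. The only cosmetic difference is that the paper phrases the key inequality via the dual characterization $\|x\|_A = \max_{\theta} \theta^\top x / \sqrt{\theta^\top A^{-1}\theta}$ and then restricts to $\theta = \pm e_j$, whereas you write out the equivalent Cauchy--Schwarz step $|e_j^\top x| \le \|A^{-1/2}e_j\|\,\|A^{1/2}x\|$ directly.
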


Note that $\Sigma_n^{-1}V_n\Sigma_n^{-1}$ is the variance of $n^{-1}\sum_{i=1}^n \psi_i$ and hence the statistics defined in Corollary~\ref{cor:Max-Statistic-Correct-Scaling} are normalized by their standard deviation. Corollary~\ref{cor:Max-Statistic-Correct-Scaling} is a deterministic inequality and can be used to derive bounds on the Gaussian approximation for the maximum of ``$z$-statistics''. The argument is standard~\citep[see, e..g,  Corollary 10 of][for details]{paulauskas1996rates}: if $U$, $W$ and $R > 0$ are real-valued random variables such that $|U - W| \le R$ almost surely, then for \emph{any} $\varepsilon > 0$ and \emph{any} function $\Psi(\cdot)$,
\begin{equation}\label{eq:basic-identity-BE}
\begin{split}
\sup_{t\in\mathbb{R}}|\mathbb{P}(U \le t) - \Psi(t)| ~&\le~ \sup_{t\in\mathbb{R}}|\mathbb{P}(W \le t) - \Psi(t)|\\ 
&\qquad+ \mathbb{P}(R > \varepsilon) + \sup_{t\in\mathbb{R}}\,[\Psi(t + \varepsilon) -
\Psi(t-\varepsilon)].
\end{split}
\end{equation}

Each quantity on the right hand side has a natural interpretation. The first term quantifies how well the c.d.f. of $W$ is approximated by $\Psi$ in a uniform sense. The second term shows the magnitude of difference between $U$ and $W$, while the third term measures the distortion in $\Psi$ because of this difference and, whenever $\Psi$ is a c.d.f. captures  anti-concentration properties of the corresponding distribution.  
Then, a direct application of the inequality~\eqref{eq:basic-identity-BE} along with the bound in Corollary~\ref{cor:Max-Statistic-Correct-Scaling} yields a Gaussian approximation for $U = \max_{1\le j\le d}|\widehat{\beta}_j - \beta_j|/(\Sigma_n^{-1}V_n\Sigma_n^{-1})_{jj}^{1/2}$ by $W = \max_{1\le j\le d}|G_j|$, where   $G = (G_1,\ldots,G_d)^{\top}$ is a mean zero Gaussian vector such that
\begin{equation}\label{eq:cov.G}
\mbox{Var}(G) = \mbox{Corr}\left( \frac{1}{n} \sum_{i=1}^n \psi_i \right) = \mbox{Corr}\left( \Sigma_n^{-1}V_n\Sigma_n^{-1}  \right)
\end{equation}
and, naturally, $\Psi(t) = \mathbb{P}(\max_{1\le j\le d}|G_j| \le t)$, for $t \geq 0$.
In details, define
\begin{equation}\label{eq:mean-Gaussian-approximation}
\Delta_n := \sup_{t\ge0}\left|\mathbb{P}\left(\max_{1\le j\le d}\left|\frac{n^{-1}\sum_{i=1}^n\psi_{ij}}{\sqrt{(\Sigma_n^{-1}V_n\Sigma_n^{-1})_{jj}}}\right| \le t\right) - \mathbb{P}\left(\max_{1\le j\le d}|G_j| \le t\right)\right|,
\end{equation}
and further set
\[
\Phi_{AC} := \sup_{t\ge0,\varepsilon>0}\,\frac{1}{\varepsilon}\mathbb{P}\left(t - \varepsilon \le \max_{1\le j\le d}|G_j| \le t+\varepsilon\right),
\]
as the anti-concentration constant.  Using bounds derived by ~\cite{Naz03} as elucidated in \cite{chernozhukov2017detailed}, it follows that $\Phi_{AC} \le C\sqrt{\log(ed)}$ for a universal constant $C$ \citep[the dependence on the dimension can be improved using the results of][]{kuchibhotla2021high}). Finally, define the event 
\begin{equation}\label{eq:beta-error-event}
\mathcal{E}_{\beta,\eta_n} := \left\{\|V_n^{-1/2}\Sigma_n^{1/2}\|_{\mathrm{op}}\mathcal{D}_n^{\Sigma}\|\widehat{\beta} - \beta\|_{\Sigma_n} \le \eta_n\right\},
\end{equation}
where $\eta_n>0$ is a positive number, possibly depending on $n$.
Using Corollary~\ref{cor:Max-Statistic-Correct-Scaling} and~\eqref{eq:basic-identity-BE}, we obtain the following result.
\begin{proposition}\label{prop:asymptotic-scaling-CLT}
For any $\eta_n > 0$, we have
\begin{equation}\label{eq:asymptotic-scaling-CLT}
\begin{split}
&\sup_{t\ge0}\left|\mathbb{P}\left(\max_{1\le j\le d}\left|\frac{\widehat{\beta}_j - \beta_j}{\sqrt{(\Sigma_n^{-1}V_n\Sigma_n^{-1})_{jj}}}\right| \le t\right) - \mathbb{P}\left(\max_{1\le j\le d}|G_j| \le t\right)\right|\\
&\quad\le \Delta_n + \mathbb{P}(\mathcal{E}_{\beta,\eta_n}^c) + \eta_n\Phi_{AC}.
\end{split}
\end{equation}
\end{proposition}
Proposition~\ref{prop:asymptotic-scaling-CLT} is, of course, only of theoretical interest;  for practical inferential purposes, we need a stronger distributional approximation result with the true standard errors replaced by their estimators. Towards that end, let $\widehat{\Sigma_n^{-1}V_n\Sigma_n^{-1}}$ be an estimator of $\Sigma_n^{-1}V_n\Sigma_n^{-1}$ and consider the event
\begin{equation}\label{eq:key.event}
\begin{split}
\mathcal{E}_{\eta_n} &:= 
\mathcal{E}_{\beta,\eta_n}
~\cap~\left\{\max_{1\le j\le   d}
\left|\sqrt{\frac{(\Sigma_n^{-1}V_n\Sigma_n^{-1})_{jj}}{(\widehat{\Sigma_n^{-1}V_n\Sigma_n^{-1}})_{jj}}}- 1\right| \le \eta_n\right\}.
\end{split}
\end{equation}
The first event in the intersection leads to an upper bound on the right hand side of the inequality in Corollary~\ref{cor:Max-Statistic-Correct-Scaling}, while the second event enables one to replace $(\Sigma_n^{-1}V_n\Sigma_n^{-1})_{jj}$ by $(\widehat{\Sigma_n^{-1}V_n\Sigma_n^{-1}})_{jj}$.

Combining all the pieces, we have now arrived to  the following general Berry-Esseen bound for the normalized entries of the OLS estimator $\widehat{\beta}$. This is the main result of this section.

\begin{theorem}\label{thm:Berry-Esseen-OLS}
For any $\eta_n > 0$,
\begin{equation}\label{eq:deterministic-BE-bound}
\begin{split}
&\sup_{t\ge0}\left|\mathbb{P}\left(\max_{1\le j\le d}\left|\frac{\widehat{\beta}_j - \beta_j}{\sqrt{(\widehat{\Sigma_n^{-1}V_n\Sigma_n^{-1}})_{jj}}}\right| \le t\right) - \mathbb{P}\left(\max_{1\le j\le d}|G_j| \le t\right)\right|\\ 
&\quad\qquad\le 2\Delta_n + \mathbb{P}(\mathcal{E}_{\eta_n}^c) + C_n(\eta_n)\eta_n\Phi_{AC} + \frac{d}{n},
\end{split}
\end{equation}
where $\mathcal{E}_{\eta_n}$ is the event given in \eqref{eq:key.event} and $C_n(\eta_n) := 1 +  \eta_n + \sqrt{2\log(2n)}
$.
\end{theorem}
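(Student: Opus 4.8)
The plan is to invoke the elementary inequality \eqref{eq:basic-identity-BE} with
\[
U \;=\; \max_{1\le j\le d}\frac{|\widehat\beta_j - \beta_j|}{\sqrt{(\widehat{\Sigma_n^{-1}V_n\Sigma_n^{-1}})_{jj}}},\qquad W \;=\; \max_{1\le j\le d}\frac{|n^{-1}\sum_{i=1}^n\psi_{ij}|}{\sqrt{(\Sigma_n^{-1}V_n\Sigma_n^{-1})_{jj}}},\qquad \Psi(t) \;=\; \mathbb{P}\Big(\max_{1\le j\le d}|G_j|\le t\Big).
\]
Since $U,W\ge 0$, the first term $\sup_t|\mathbb{P}(W\le t)-\Psi(t)|$ of that inequality is exactly $\Delta_n$ (for $t<0$ both probabilities vanish), and the left-hand side of \eqref{eq:deterministic-BE-bound} is exactly $\sup_t|\mathbb{P}(U\le t)-\Psi(t)|$. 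So the real content is to (i) produce an almost-sure bound $|U-W|\le R$, (ii) choose the slack $\varepsilon$ and control $\mathbb{P}(R>\varepsilon)$, and (iii) read off the anti-concentration cost.

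For (i), I would abbreviate $\sigma_j^2 = (\Sigma_n^{-1}V_n\Sigma_n^{-1})_{jj}$ and $\widehat\sigma_j^{\,2} = (\widehat{\Sigma_n^{-1}V_n\Sigma_n^{-1}})_{jj}$. On the event $\mathcal{E}_{\eta_n}$, Corollary~\ref{cor:Max-Statistic-Correct-Scaling} combined with $\mathcal{D}_n^{\Sigma}\le\tfrac12$ (so that $\mathcal{D}_n^{\Sigma}/(1-\mathcal{D}_n^{\Sigma})\le 2\mathcal{D}_n^{\Sigma}$) and the second event in \eqref{eq:key.event} gives $\max_j|\widehat\beta_j-\beta_j - n^{-1}\sum_i\psi_{ij}|/\sigma_j\le 2\kappa_n\eta_n$. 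Writing $(\widehat\beta_j-\beta_j)/\widehat\sigma_j = \big((n^{-1}\sum_i\psi_{ij})/\sigma_j + \delta_j\big)(\sigma_j/\widehat\sigma_j)$ with $|\delta_j|\le 2\kappa_n\eta_n$, and invoking the third event $|\sigma_j/\widehat\sigma_j - 1|\le\eta_n$, I obtain on $\mathcal{E}_{\eta_n}$ that
\[
|U-W| \;\le\; \max_{1\le j\le d}\Big|\frac{\widehat\beta_j-\beta_j}{\widehat\sigma_j} - \frac{n^{-1}\sum_i\psi_{ij}}{\sigma_j}\Big| \;\le\; \eta_n W + 2\kappa_n\eta_n(1+\eta_n).
\]
I would then set $R := \eta_n W + 2\kappa_n\eta_n(1+\eta_n)$ on $\mathcal{E}_{\eta_n}$ and $R := +\infty$ on $\mathcal{E}_{\eta_n}^c$, so that $|U-W|\le R$ holds almost surely.

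For (ii), I would take $\varepsilon := \eta_n C_n(\eta_n) = 2\kappa_n\eta_n(1+\eta_n) + \eta_n\sqrt{2\log(2n)}$, which is calibrated precisely so that on $\mathcal{E}_{\eta_n}$ one has $R>\varepsilon$ if and only if $W>\sqrt{2\log(2n)}$; hence $\mathbb{P}(R>\varepsilon)\le\mathbb{P}(\mathcal{E}_{\eta_n}^c) + \mathbb{P}(W>\sqrt{2\log(2n)})$. Since $\sqrt{2\log(2n)}\ge 0$, the definition of $\Delta_n$ gives $\mathbb{P}(W>\sqrt{2\log(2n)})\le\mathbb{P}(\max_j|G_j|>\sqrt{2\log(2n)}) + \Delta_n$, and a union bound together with the standard tail bound $\mathbb{P}(|Z|>s)\le 2e^{-s^2/2}$ for $Z\sim N(0,1)$ (each $G_j$ is standard normal because $\mathrm{Var}(G)$ is a correlation matrix) bounds the Gaussian term by $2d\,e^{-\log(2n)}=d/n$. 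For (iii), the anti-concentration term of \eqref{eq:basic-identity-BE}, in its sharp one-sided form $\sup_t\max\{\Psi(t+\varepsilon)-\Psi(t),\,\Psi(t)-\Psi(t-\varepsilon)\}\le\sup_t\mathbb{P}(t<\max_j|G_j|\le t+\varepsilon)$, is at most $\varepsilon\Phi_{AC}=\eta_n C_n(\eta_n)\Phi_{AC}$. Inserting these three bounds into \eqref{eq:basic-identity-BE} yields exactly \eqref{eq:deterministic-BE-bound}.

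The one genuinely delicate point is step (ii): because no distributional assumptions are imposed, $W$ admits no a priori bound and $R$ can be infinite, so a purely deterministic inequality $|U-W|\le\varepsilon$ is unavailable. The remedy is to absorb both $\mathcal{E}_{\eta_n}^c$ and the tail event $\{W>\sqrt{2\log(2n)}\}$ into $\mathbb{P}(R>\varepsilon)$, and to pay for the latter with a second copy of $\Delta_n$ plus the crude Gaussian maximal bound $d/n$ — which is exactly what produces the $2\Delta_n$ and the $d/n$ appearing in the statement. Everything else is bookkeeping, taking care only that the anti-concentration slack is counted one-sidedly (width $\varepsilon$, not $2\varepsilon$) so that the constant in front of $\eta_n C_n(\eta_n)\Phi_{AC}$ is $1$.
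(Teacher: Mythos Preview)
Your proof is correct and follows essentially the same route as the paper. The paper writes out the two-sided sandwich argument explicitly rather than invoking \eqref{eq:basic-identity-BE} directly, but the substance is identical: the same bound $|U-W|\le 2\kappa_n\eta_n(1+\eta_n)+\eta_n W$ on $\mathcal{E}_{\eta_n}$, the same tail bound $\mathbb{P}(W>\sqrt{2\log(2n)})\le \Delta_n+d/n$ via the Gaussian union bound, and the same one-sided anti-concentration yielding the factor $1$ (not $2$) in front of $C_n(\eta_n)\eta_n\Phi_{AC}$.
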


The above result builds upon the deterministic bound of Theorem~\ref{thm:Basic-deter-ineq}  and holds (possibly trivially) for any  data generating distributions. In particular, it does not require independence or identically distributed observations. Thus a Berry--Esseen type result for $\widehat{\beta}$ can now be proved under various assumptions of dependence among the observations, such as $m$-dependence or more general temporal dependence. Theorem~\ref{thm:Berry-Esseen-OLS} does not even require the sample size $n$ to be a fixed number; in case $n$ is a random variable independent of the data and $(X_i, Y_i)$ are identically distributed, then the $d/n$ term in~\eqref{eq:deterministic-BE-bound} is replaced by $d/\mathbb{E}[n]$ and $C_n(\eta_n)$ is replaced by 
$1 + \eta_n + \sqrt{2\log(2\mathbb{E}[n])}$. In principle, the bound of Theorem~\ref{thm:Basic-deter-ineq} allows for $n$ to be a stopping time with respect to the filtration generated by $(X_i, Y_i), i\ge1$ provided that appropriate bounds on the terms in equation  \eqref{eq:deterministic-BE-bound} that hold for stopping ties are available.

As mentioned before, we took $\Psi(t) = \mathbb{P}(\max_{1\le j\le d}|G_j| \le t)$ in identity~\eqref{eq:basic-identity-BE}, which is arguably a natural choice because $n^{-1}\sum_{i=1}^n \psi_{ij}/(\Sigma_n^{-1}V_n\Sigma_n^{-1})_{jj}^{1/2}$ converges in distribution to $G_j$ for each $1\le j\le d$. There are other choices for $\Psi(\cdot)$ that would lead to faster rates of convergence for $\Delta_n$, such as Edgeworth expansions and moment-matching distributions. Edgeworth expansions can be found in~\citet[Theorem 20.1]{bhattacharya2010normal}, but the dependence on the dimension here is not explicit. With moment-matching distributions one replaces the Gaussian vector $G$ by a different one which matches more than the first two moments of $n^{-1}\sum_{i=1}^n \psi_{ij}/(\Sigma_n^{-1}V_n\Sigma_n^{-1})_{jj}^{1/2}$; these can be found in~\cite{boutsikas2015penultimate} and~\cite{zhilova2016non}. We leave these refined Berry-Esseen bounds for future work.

The four terms appearing in the Berry-Esseen bound~\eqref{eq:deterministic-BE-bound} of Theorem~\ref{thm:Berry-Esseen-OLS} capture different types of approximations, both of deterministic and stochastic nature. Specifically, the quantity  $C_n(\eta_n)\eta_n$ is a bound on the linearization  error, appropriately measured in the $\| \cdot \|_{ \Sigma_n V^{-1}_n\Sigma_n }$ norm, stemming from replacing $\widehat{\beta} - \beta$ with its  linear approximation $n^{-1}\sum_{i=1}^n \psi_i$, and  $\Delta_n$  is its corresponding Berry-Esseen bound. The term $\mathbb{P}(\mathcal{E}_{\eta_n}^c)$ collects multiple types of estimation errors: for $\widehat{\Sigma_n^{-1}V_n\Sigma_n^{-1}}$, $\Sigma_n^{-1/2}\widehat{\Sigma}_n\Sigma_n^{-1/2}$ and $\widehat{\beta} - \beta$, which we will study in the next section assuming an i.i.d. setting. The presence of the anti-concentration constant $\Phi_{AC}$ is standard in high-dimensional Berry--Esseen type results \citep[see, e.g.][]{chernozhukov2017detailed}, and allows to separate the effect of the estimation errors from the choice of the value of the threshold $t$ to produce an approximate $1-\alpha$ nominal coverage. Finally, the term $d/n$ stems from the use of a Gaussian approximation for $\max_{j}|n^{-1}\sum_{i=1}^n \psi_{ij}|/{(\Sigma_n^{-1}V_n\Sigma^{-1})_{jj}}^{1/2}$. This is also the reason for the coefficient $2$ of $\Delta_n$ in~\eqref{eq:deterministic-BE-bound}; see the proof of Theorem~\ref{thm:Berry-Esseen-OLS} for details.

Given appropriate distributional assumptions on the random vectors $(X_i, Y_i), 1\le i\le n$, the right hand side of~\eqref{eq:deterministic-BE-bound} can be bounded as follows. For any $\eta_n > 0$, the quantity $\mathbb{P}(\mathcal{E}_{\eta_n}^c)$ is controlled using concentration inequalities for mean zero random vectors and random matrices. In the next section, we will derive such inequalities assuming i.i.d. observations and imposing rather weak assumptions on the distribution of the data, namely the existence of finitely many moments.
For data obeying certain types of dependence, useful concentration inequalities are given in~\cite{Liu13} and~\citet[Section 5 and Appendix B]{Uniform:Kuch18}. Then, choosing $\eta_n$ suitably so that $\mathbb{P}(\mathcal{E}_{\eta_n}^c)$ tends to zero as $n$ and $d$ increase yields that $C_n(\eta_n)\eta_n\Phi_{AC} = o(1)$. Finally, the quantity $\Delta_n$ is controlled using Berry-Esseen bounds for averages of mean zero random vectors with explicit dependence on the dimension. This can be accomplished in more than one way. For independent random vectors, optimal Gaussian approximation bounds holding uniformly over all convex sets as given in ~\cite{bentkus2003dependence} and~\cite{raivc2019multivariate} would imply the requirement that $d = o(n^{2/7})$. 
However, in our analysis we only require convergence to the standard Gaussian distribution over the smaller sub-class of all symmetric hyyper-rectangles. In this case, we apply the recent, sharp high-dimensional Berry--Esseen bounds of~\cite{kuchibhotla2020high}, which are valid under the minimal condition of existence of the third moments, allow for a poly-logarithmic dependence in the dimension $d$ and exhibit the optimal scaling of $n^{-1/2}$ in the sample size. Alternatively, the results of~\cite{chernozhukov2020nearly} could also be used; such bounds exhibit a slightly more favorable dependence in $d$ \citep[see also][]{das2020central} but requires the existence of fourth  moments. 
Finally, for dependent observations, the Berry-Eseeen bounds of \cite{ZhangWu17} would be relevant. 


\section{Explicit Rates in case of Independent Observations}
\label{section::explicit}

Theorem~\ref{thm:Berry-Esseen-OLS} in the previous section provides a bound on the difference between the distribution of OLS estimator to that of the Gaussian distribution without assuming any specific dependence structure on the observations $(X_i, Y_i), 1 \le i\le n$. In order to derive concrete rates from this result, it remains to construct an estimator $\widehat{\Sigma_n^{-1}V_n\Sigma_n^{-1}}$, bound the Gaussian approximation error $\Delta_n$  and control the term $\mathbb{P}(\mathcal{E}_{\eta_n}^c)$ for a suitable chosen $\eta_n > 0$. 
Below, we carry out this program assuming independent and identically distributed observations and in a high-dimensional framework in which the parameters of the data generating distribution, including its dimension, are allowed to vary with the sample size. In this case, letting $(X, Y)$ be identically distributed as the observations $(X_i, Y_i), 1\le i\le n$, we have that 
\[
\Sigma_n = \Sigma := n^{-1}\sum_{i=1}^n \mathbb{E}[X_iX_i^{\top}] = \mathbb{E}[XX^{\top}] 
\]
and
\begin{align*}
V_n &= \mbox{Var}\left(\frac{1}{n}\sum_{i=1}^n X_i(Y_i - X_i^{\top}\beta)\right)\\ 
~&=~ \frac{1}{n^2}\sum_{i=1}^n \mbox{Var}(X_i(Y_i - X_i^{\top}\beta)) ~\overset{(a)}{=}~ \frac{1}{n}\mathbb{E}[XX^{\top}(Y - X^{\top}\beta)].
\end{align*}
The first equality follows because $(X_i, Y_i), 1\le i\le n$ are independent and $\beta$ satisfies $\sum_{i=1}^n \mathbb{E}[X_i(Y_i - X_i^{\top}\beta)] = 0$. It is interesting to note that equality (a) holds only because $\mathbb{E}[X_i(Y_i - X_i^{\top}\beta)] = 0$ for all $i$, which does not follow if the observations are non-identically distributed.
Furthermore,  the matrix $\Sigma$ can be estimated by $\widehat{\Sigma}_n$ and the matrix $V_n$ by
\[
\widehat{V}_n := \frac{1}{n}\times\frac{1}{n}\sum_{i=1}^n X_iX_i^{\top}(Y_i - X_i^{\top}\hat{\beta})^2. 
\]
The final plug-in estimator of the
asymptotic variance $\Sigma_n^{-1}V_n\Sigma_n^{-1}$ is the classical
{\em sandwich estimator}~\citep{White1980,Buja14}:
\[
\widehat{\Sigma}_n^{-1}\widehat{V}_n\widehat{\Sigma}_n^{-1}.
\]
For notational convenience, set
\[
V := \mathbb{E}[XX^{\top}(Y - X^{\top}\beta)^2]\quad\mbox{and}\quad \widehat{V} := \frac{1}{n}\sum_{i=1}^n X_iX_i^{\top}(Y_i - X_i^{\top}\widehat{\beta})^2,
\]
so that $V_n = n^{-1}V$ and $\widehat{V}_n = n^{-1}\widehat{V}$.

We will derive our bounds using the following assumptions on the data generating distribution. In particular, our result will also hold assuming only moment conditions  on the response and covariates, which can therefore be heavy-tailed. This is a considerable weakening of the assumptions commonly used in the literature, where the response and covariates are often assumed to have moments of all order (i.e. to be sub-Gaussian). In contrast we only assume the response and covariates to have $q \geq 2 $ and $q_x \ge 4$ moments, respectively, whose values may in principle depend on the dimension $d$.  
\begin{description}
	\item[\namedlabel{eq:DGP}{(\textbf{DGP})}] The observations $(X_i, Y_i)\in\mathbb{R}^d\times\mathbb{R}, 1\le i\le n$ are independent and identically distributed (i.i.d.).
	\item[\namedlabel{eq:moments-errors}{(\textbf{E})($q$)}] There exists some $q \ge 2$ and a constant $K_y\in(0, \infty)$ such that
	\[
	\Big(\mathbb{E}[|Y_i - X_i^{\top}\beta|^q]\Big)^{1/q} ~\le~ K_y < \infty,\quad\mbox{for all}\quad 1\le i\le n. 
	\]
	\item[\namedlabel{eq:covariate-subGaussian}{(\textbf{X-SG})}] There exists a constant $K_x\in(0, \infty)$ such that
	\[
	\mathbb{E}\left[\exp\left(\frac{|u^{\top}\Sigma^{-1/2}X_i|^2}{K_x^2}\right)\right] \le 2,\quad\mbox{for all}\quad 1\le i\le n\mbox{ and }u\in S^{d-1}.
	\]
	\item[\namedlabel{assump:finite-moment-covariates}{(\textbf{X})($q_x$)}] There exists some $q_x \ge 4$ and a constant $K_x \ge 1$ such that
    \[
    \left(\mathbb{E}[|u^{\top}\Sigma^{-1/2}X_i|^{q_x}]\right)^{1/q_x} \le K_x,\quad\mbox{for all}\quad 1\le i\le n\mbox{ and }u\in S^{d-1}.
    \]
    \item[\namedlabel{assump:finite-moment-independence-covariates}{(\textbf{X-IND})($q_x$)}] There exists $q_x \ge 4$ and a constant $K_x \ge 1$ such that
    \[
    \left(\mathbb{E}[|u^{\top}\Sigma^{-1/2}X_i|^{q_x}]\right)^{1/q_x} \le K_x,\quad\mbox{for all}\quad 1\le i\le n\mbox{ and }u\in S^{d-1},
    \]
    and $Z_i := \Sigma^{-1/2}X_i, 1\le i\le n$ are vectors with independent random variables as their coordinates, i.e., if $Z_i = (Z_i(1), \ldots, Z_i(d))^{\top}$, then $Z_i(1), \ldots, Z_i(d)$ are independent random variables.
	\item[\namedlabel{eq:bounded-asymptotic-variance}{(\textbf{$\Sigma$-$V$})}] There exist constants $0 < \underline{\lambda} \le \overline{\lambda} < \infty$ such that
	\[
	\underline{\lambda} ~\le~ \lambda_{\texttt{min}}(\Sigma^{1/2} V^{-1}\Sigma^{1/2}) ~\le~ \lambda_{\texttt{max}}(\Sigma^{1/2}V^{-1}\Sigma^{1/2}) ~\le~ \overline{\lambda}.
	\]
\end{description}
We now provide some comments on these assumptions. Our results below can be modified to hold true even if the condition~\ref{eq:DGP}
is weakened by requiring the the observations to the independent and not necessarily identically distributed.
However, in this case, the parameter $\beta$ will depend on the data generating distributions in complicated ways and will only satisfy
$\sum_{i=1}^n \mathbb{E}[X_i(Y_i - X_i^{\top}\beta)] = 0,$
with no control on the expectation of individual summands. This leads to an impossibility in estimating the variance of $n^{-1}\sum_{i=1}^n X_i(Y_i - X_i^{\top}\beta)$, without further assumptions; see~\cite{Liu95} and~\citet[Proposition 3.5]{Bac16} for details. Condition~\ref{eq:moments-errors} requires the existence of $q$-th order moment of the ``errors'' $Y_i - X_i^{\top}\beta$ and may be verified by assuming the response variables $Y_i$'s' to only have a finite $q$-th order moment. Indeed, observe that
\[
\Big(\mathbb{E}[|Y - X^{\top}\beta|^q]\Big)^{1/q} \le \left(\mathbb{E}[|Y|^q]\right)^{1/q} + \left(\mathbb{E}[ |(\Sigma^{-1/2}X)^{\top}\Sigma^{1/2}\beta |^q]\right)^{1/q}.
\]
Now, because $0 \leq \mathbb{E}[(Y - X^{\top}\beta)^2] = \mathbb{E}[Y^2] - \beta^\top \Sigma \beta$, we have $\|\Sigma^{1/2}\beta\| \le (\mathbb{E}[Y^2])^{1/2}$ and hence
\[
\Big(\mathbb{E}[|Y - X^{\top}\beta|^q]\Big)^{\frac{1}{q}} \le \left(\mathbb{E}[|Y|^q]\right)^{\frac{1}{q}} + (\mathbb{E}[Y^2])^{1/2}\sup_{a\in S^{d-1}}\,\left(\mathbb{E}[|(\Sigma^{-1/2}X)^{\top}a|^q]\right)^{1/q}.
\]
Therefore, assuming that $(\mathbb{E}[|Y|^q])^{1/q} \le \overline{K}_x < \infty$ for some $\overline{K}_x$ along with condition~\ref{assump:finite-moment-covariates} (with $q_x \ge q$) implies condition~\ref{eq:moments-errors}. For the sake of readability, we do not make the dependence on the parameter $K_y$ in Condition~\ref{eq:moments-errors} explicit in our bounds, though it could be tracked through the constants in our proofs, allowing in principle for a dependence of $K_y$ on $d$.
Assumption~\ref{assump:finite-moment-covariates} is a finite moment assumption on the covariates and is a significant weakening of the sub-Gaussianity assumption~\ref{eq:covariate-subGaussian} commonly used in the literature. Assumption~\ref{assump:finite-moment-independence-covariates} is more restrictive than~\ref{assump:finite-moment-covariates} and is commonly used in the random matrix theory literature. In~\ref{assump:finite-moment-covariates} and~\ref{assump:finite-moment-independence-covariates}, we require $q_x \ge 4$ leading to $L_4$--$L_2$ equivalence in moments. This is used for an application of the results of~\cite{oliveira2013lower} in Proposition~\ref{prop:estimation-error-beta} to obtain a rate for $\| \widehat{\beta} - \beta \|_{\Sigma}$. It can be relaxed by instead making use of the results of~\cite{yaskov2015sharp}. All our results in this section are derived under~\ref{assump:finite-moment-covariates} or~\ref{assump:finite-moment-independence-covariates}. The condition number assumption~\ref{eq:bounded-asymptotic-variance} requires $\Sigma$ and $V$ to be of the ``same order'' and appears to be unavoidable.  Noting that
\[
V = \mathbb{E}[XX^{\top}(Y - X^{\top}\beta)^2] = \mathbb{E}\left[XX^{\top}\mathbb{E}[(Y - X^{\top}\beta)^2|X]\right],
\]
we see that condition~\ref{eq:bounded-asymptotic-variance} is satisfied if
\[
\overline{\lambda}^{-1} ~\le~ \inf_{x}\,\mathbb{E}[(Y - X^{\top}\beta)^2|X = x] ~\le~ \sup_{x}\mathbb{E}[(Y - X^{\top}\beta)^2|X = x] ~\le~ \underline{\lambda}^{-1},
\]
where $\inf_x$ and $\sup_x$ should be taken as the essential infimum and supremum with respect to the distribution of $X$. 
In particular, Condition~\ref{eq:bounded-asymptotic-variance} does not rule out the possibility of vanishing eigenvalues (in $n$ and/or $d$). Again, 
we do not make the dependence on $\underline{\lambda}$ and $\overline{\lambda}$ explicit in our rates. 

In the following, we will make use of the general inequalities derived in Section~\ref{section::determiniswtic} and the assumptions above to derive explicit rates in the Berry--Esseen bounds. We will first prove a bound on the error of the Gaussian approximation $\Delta_n$ defined in~\eqref{eq:mean-Gaussian-approximation}, which is a key component of the bounds in Proposition~\ref{prop:asymptotic-scaling-CLT} and Theorem~\ref{thm:Berry-Esseen-OLS}. Define
\[
\lambda_{\circ} := \lambda_{\min}\left(\mbox{Corr}(\Sigma^{-1}V\Sigma^{-1})\right),
\]
a quantity that will play a role in controlling the rate of convergence of $\widehat{\beta}$ to a Gaussian distribution \citep[see, e.g.][]{lopes2020central, kuchibhotla2020high}.
\begin{proposition}\label{prop:mean-Gaussian-approximation-explicit}
Suppose assumptions~\ref{eq:DGP} and~\ref{eq:bounded-asymptotic-variance} hold. Further, suppose~\ref{eq:moments-errors} and~\ref{assump:finite-moment-covariates} hold true with $1/q + 1/q_x \le 1/3$. Then there exists a universal constant $C$ such that
\begin{equation}\label{eq:mean-Gaussian-approx-explicit}
    \Delta_n ~\le~ 
    CK_x^3K_y^3\frac{d^{3/q_x}(\log n)^{5/2}\log(\lambda_{\circ}^3\sqrt{n})}{n^{1/2}\underline{\lambda}^{3/2}\lambda_{\circ}}.
\end{equation}
\end{proposition}
\begin{remark}
The condition $1/q_x + 1/q \le 1/3$ implies that $q_x \ge 3$ and hence the right hand side of~\eqref{eq:mean-Gaussian-approx-explicit} converges to zero whenever $d = o(\sqrt{n})$, up to poly-logarithmic factors in $n$. It is important to mention that a third moment condition of sort cannot be dispensed of. Indeed, the condition that $\mathbb{E}[|e_j^{\top}\psi_i|^3] < \infty$ for all $j$ is required  to achieve an $n^{-1/2}$ dependence on the sample size; this is well-known even in the univariate case. Because $\psi_i$ is the product of $X_i$ and $Y_i - X_i^{\top}\beta$, by H{\"o}lder's inequality, a finite third moment bound on $\psi_i$ is equivalent to  $1/q_x + 1/q \le 1/3$. This requirement can be further weakened to condition that $\min\{q_x, q\} \ge 3$, as long as assumption~\ref{eq:moments-errors} is modified to the conditional moment assumption: $\mathbb{E}[|Y_i - X_i^{\top}\beta|^q|X] \le K_y$, almost everywhere with respect to the distribution of  $X$. 
\end{remark}
Next, Proposition~\ref{prop:mean-Gaussian-approximation-explicit} can be combined with Proposition~\ref{prop:asymptotic-scaling-CLT} to finally produce a  Berry--Esseen bound for $\widehat{\beta} - \beta$ with the asymptotic variance scaling, as shown next.
\begin{theorem}\label{thm:asymptotic-scaling-CLT-explicit}
Suppose assumptions~\ref{eq:DGP} and~\ref{eq:bounded-asymptotic-variance} hold. Further, suppose~\ref{eq:moments-errors} and~\ref{assump:finite-moment-covariates} hold true with $1/q + 1/q_x \le 1/3$ and $q_x \ge 4$. If $d + 2\log(4n) \le n/(14K_x)^2$, then there exists a constant $C = C(q, q_x, K_x, K_y, \widebar{\lambda})$ such that
\begin{equation}\label{eq:asymptotic-scaling-CLT-explicit}
\begin{split}
&\sup_{t\ge0}\left|\mathbb{P}\left(\max_{1\le j\le d}\left|\frac{\widehat{\beta}_j - \beta_j}{\sqrt{(\Sigma_n^{-1}V_n\Sigma_n^{-1})_{jj}}}\right| \le t\right) - \mathbb{P}\left(\max_{1\le j\le d}|G_j| \le t\right)\right|\\
&\quad\le 
CK_x^3K_y^3\frac{d^{3/q_x}(\log n)^{5/2}\log(\lambda_{\circ}^3\sqrt{n})}{n^{1/2}\underline{\lambda}^{3/2}\lambda_{\circ}}
\\
&\qquad
+ C\log(en)\left\{\frac{d\log^4(n/d)}{\sqrt{n}} + \left(\frac{d^{3/2}}{n^{1-2/q_x}}\right)^{\max\{q_x, 6\}/8}\right\}.
\end{split}
\end{equation}
If, in addition,~\ref{assump:finite-moment-independence-covariates} holds true, then $(d^{3/2}/n^{1-2/q_x})^{\max\{q_x, 6\}/8}$ on the right hand side can be replaced by $(d^{1/2 + 2/q_x}/n^{1-2/q_x})^{\max\{q_x, 6\}/8}$.
\end{theorem}

Note that the right hand side of the Berry--Esseen bound of Theorem~\ref{thm:asymptotic-scaling-CLT-explicit} converges to zero as long as $d = o(\min\{n^{2(1-2/q_x)/3}, n^{1/2}\})$ (ignoring the $log(n)$ factors). This condition becomes $d = o(n^{1/2})$ for $q_x \ge 8$. 
If the covariates are sub-Gaussian/sub-exponential or just have $\log(n)$ number of moments, then Theorem~\ref{thm:asymptotic-scaling-CLT-explicit} only requires $q \ge 3\log(n)/(\log(n) - 3)$ many moments on the response. 
Under~\ref{assump:finite-moment-independence-covariates}, the dimension requirement reduces to $d = o(n^{1/2})$ for all $q_x \ge 4$. Note that the condition $1/q + 1/q_x \le 1/3$ offers a trade-off between the number of moments on covariates and response.

As mentioned before, a result of the type given in Theorem~\ref{thm:asymptotic-scaling-CLT-explicit} is mostly of theoretical importance: in practice, in order to build confidence intervals  it is also necessary to incorporate the estimated variances in the Berry--Esseen bound. To that effect, we deploy the general bounds given in  Theorem~\ref{thm:Berry-Esseen-OLS}. We remark that the assumptions of Theorem~\ref{thm:asymptotic-scaling-CLT-explicit} do not appear to be strong enough because we now need to control the error the in sandwich variance estimator, as stated in the event~\eqref{eq:key.event}. This is accomplished in the next result, which provides consistency rates for the sandwich variance estimator in high-dimensions and under mild moment conditions, and may be of independent interest. The proof is somewhat long and can be found in Appendix~\ref{appsubsec:sandwich-variance}.
Throughout, we will assume that conditions~\ref{assump:finite-moment-covariates} (or~\ref{assump:finite-moment-independence-covariates}) and~\ref{eq:moments-errors} are in effect with $q_x$ and $q$ such that
\[
q_{xy} ~:=~ \frac{qq_x}{q + q_x} = \left(\frac{1}{q} + \frac{1}{q_x} \right)^{-1} \ge 4,
\]
which in particular implies that $\min\{ q, q_x\}  \ge 4$.
In fact, the results in Appendix~\ref{appsubsec:sandwich-variance} are derived only assuming $q_{xy} \ge 2$.
\begin{lemma}\label{lem:sandwich-variance-error-control}
Suppose assumptions~\ref{eq:DGP} and~\ref{eq:bounded-asymptotic-variance} hold. Further, suppose~\ref{eq:moments-errors} and~\ref{assump:finite-moment-covariates} hold true with $q_{xy} \ge 4$. If $d + 2\log(4n) \le n/(14K_x)^2$, then there exists a constant $C = C(q, q_x, K_x, K_y, \widebar{\lambda}, \underline{\lambda})$ such that
\begin{equation}\label{eq:sandwich-error}
    \begin{split}
    &\sup_{\theta\in\mathbb{R}^d}\left|\frac{\theta^{\top}\widehat{\Sigma}_n^{-1}\widehat{V}_n\widehat{\Sigma}_n^{-1}\theta}{\theta^{\top}\Sigma^{-1}V\Sigma^{-1}\theta} - 1\right|\\ 
    &\le C\left[\frac{d^{1/2}}{n^{1/2 - 1/q_{xy}}} + \frac{d^{3/4}}{n^{3/4-3/(2q_x)}} + \frac{d^{1/3}}{n^{1/3}} + \frac{d(\log n)^{1/3}}{n^{2/3 - 4/(3q_x)}} + \frac{d^{2/3}(\log n)^{1/6}}{n^{1/2 - 2/(3q_x)}}\right],
    \end{split}
\end{equation}
with probability at least 
$$
1 - \frac{d^{q_{xy}/4}}{n^{q_{xy}/4-1/2}} - \frac{d^{q_x/8}}{n^{q_x/8-1/4}}- C\frac{d^{1/3}\log n}{n^{1/3}} - C\frac{d(\log n)^{4/3}}{n^{2/3 - 4/(3q_x)}} - C\frac{d^{2/3}(\log n)^{7/6}}{n^{1/2 - 2/(3q_x)}}.
$$
Here we require that the right hand side of~\eqref{eq:sandwich-error} is less than $1/21$.
\end{lemma}

\begin{remark}
The proof of Lemma~\ref{lem:sandwich-variance-error-control} establishes a concentration inequality for $\sup_{\theta\in\mathbb{R}^d}\left|\frac{\theta^{\top}\widehat{\Sigma}_n^{-1}\widehat{V}_n\widehat{\Sigma}_n^{-1}\theta}{\theta^{\top}\Sigma^{-1}V\Sigma^{-1}\theta} - 1\right|$.
The consistency rate in \eqref{eq:sandwich-error} was obtained by setting the largest possible scaling of $d$ with respect to $n$ that  ensures consistency while keeping all the other quantities fixed. This choice also affects the value for the lower bound of the probability of this event. Other choices are possible.
\end{remark}

    Lemma~\ref{lem:sandwich-variance-error-control} shows that the sandwich estimator is consistent for the asymptotic variance of $\widehat{\beta}$ provided that 
\[
d = o\left(\min\left\{n^{1 - 2/q_{xy}},\,n^{1 - 2/q_x},\,n,\,n^{2(1-2/q_x)/3},\,n^{3/4 - 1/q_x}\right\}\right),
\]
ignoring  $\log(n)$ factors.
Using the assumption that $q_{xy} \ge 4$, this condition reduces to $d = o(\min\{n^{1/2}, n^{2(1-2/q_x)/3}\})$ which becomes $d = o(n^{1/2})$ for $q_x \ge 8$. This requirement is the same as the one obtained in Theorem~\ref{thm:asymptotic-scaling-CLT-explicit}, but it should be emphasized that Theorem~\ref{thm:asymptotic-scaling-CLT-explicit} only requires $q_{xy} \ge 3$. If the assumption~\ref{assump:finite-moment-covariates} is replaced with~\ref{assump:finite-moment-independence-covariates}, then the requirement on the dimension becomes $d = o(n^{1/2})$ even when $q_{xy} > 2, q_{x} \ge 4$ 
This can be derived following the proof of Lemma~\ref{lem:sandwich-variance-error-control} and using the second result of Lemma~\ref{lem:M-4-bound}.

Using the above error bounds for the sandwich variance estimator from Lemma~\ref{lem:sandwich-variance-error-control} along with the Berry-Eseeen estimate of Theorem~\ref{thm:asymptotic-scaling-CLT-explicit}  immediately leads to the result of this section, a uniform Berry-Esseen bound for the studentized OSL estimator.

\begin{theorem}[Berry--Esseen bound under Independence]\label{thm::berry-esseen}
Assume conditions~\ref{eq:DGP},~\ref{eq:bounded-asymptotic-variance}. Suppose conditions~\ref{assump:finite-moment-covariates} and~\ref{eq:moments-errors} hold true with $q_{xy} \ge 4$. Further suppose the dimension requirements of Lemma~\ref{lem:sandwich-variance-error-control} also hold. Then there exists a constant $C = C(q, q_x, K_x, K_y, \widebar{\lambda}, \underline{\lambda})$ such that
\begin{align*}
&\sup_{t\ge0}\left|\mathbb{P}\left(\max_{1\le j\le d}\left|\frac{n^{1/2}(\widehat{\beta}_j - \beta_j)}{\sqrt{(\widehat{\Sigma}_n^{-1}\widehat{V}\widehat{\Sigma}_n^{-1})_{jj}}}\right| \le t\right) - \mathbb{P}\left(\max_{1\le j\le d}|G_j| \le t\right)\right|\\ 
&\le 
CK_x^3K_y^3\frac{d^{3/q_x}(\log n)^{5/2}\log(\lambda_{\circ}^3\sqrt{n})}{n^{1/2}\underline{\lambda}^{3/2}\lambda_{\circ}}
\\
&\qquad
+ C\log(en)\left[\frac{d\log^4(n/d)}{\sqrt{n}}+ \frac{d^{1/2}}{n^{1/2 - 1/q_{xy}}} + \frac{d^{3/4}}{n^{3/4-3/(2q_x)}}\right]
\\
&\qquad
+ C\log(en)\left[\frac{d^{1/3}}{n^{1/3}} + \frac{d(\log(en))^{1/3}}{n^{2/3 - 4/(3q_x)}} + \frac{d^{2/3}(\log(en))^{1/6}}{n^{1/2 - 2/(3q_x)}}\right].
\end{align*}
\end{theorem}

Ignoring the logarithmic terms in $d$ and $n$, the bound converges to zero if $q_{xy} \ge 4$ and 
\begin{equation}\label{eq:requirement-d-BE-OLS}
d = o\left(\min\left\{n^{1/2},\, n^{2(1-2/q_x)/3}\right\}\right),
\end{equation}
More generally, if $q_x \ge 8$ and $q_{xy} \ge 4$, then the requirement~\eqref{eq:requirement-d-BE-OLS} reduces to $d = o(\sqrt{n})$. This is a high-dimensional scaling that has been obtained, in different settings and based on more stringent assumptions, by several authors in the past, including~\cite{Portnoy84,Portnoy85,Portnoy86,portnoy1987central,Portnoy88},~\cite{He2000} and~\cite{spokoiny2012parametric}. We point out, however, the important difference that in these and related papers, the authors prove central limit theorems under a well-specified model (e.g., assuming a linear regression function) and for estimators normalized by their true but unknown variance. In contrast, we prove a finite-sample Berry-Esseen bound with an estimated variance. The requirement~\eqref{eq:requirement-d-BE-OLS} is under the mild  assumption~\ref{assump:finite-moment-covariates} on the covariates, which only calls for finite moments. If instead  the stronger Assumption~\ref{assump:finite-moment-independence-covariates} is in effect, then the requirement becomes $d = o(\sqrt{n})$ whenever $q_x \ge 4$ and $q_{xy} \ge 2$. Finally, it is worth emphasizing that we impose higher moment restrictions for the Berry--Esseen bound with estimated variance (Theorem~\ref{thm:Berry-Esseen-OLS}) compared to that with the asymptotic variance (Theorem~\ref{thm:asymptotic-scaling-CLT-explicit}).

Importantly,  Theorem~\ref{thm::berry-esseen} further yields that the length of the individual confidence intervals for the entries of $\beta$ can be of order $1/\sqrt{n}$, which amounts to a parametric accuracy rate, independent of the dimension. Indeed, for any fixed $t \geq 0$, the length of the interval for the $j$th coordinate is ${2tn^{-1/2}}(\widehat{\Sigma}_n^{-1}\widehat{V}\widehat{\Sigma}_n^{-1})_{jj}^{1/2}$, which is
\begin{align*}
&\frac{2t}{\sqrt{n}} \sqrt{({\Sigma}^{-1}V{\Sigma}^{-1})_{jj}}
+ \frac{2t}{\sqrt{n}} \left( \sqrt{(\widehat{\Sigma}_n^{-1}\widehat{V}\widehat{\Sigma}_n^{-1})_{jj}} - \sqrt{({\Sigma}^{-1}V{\Sigma}^{-1})_{jj}}  \right)\\ 
&= O \left( \sqrt{\frac{({\Sigma}^{-1}V{\Sigma}^{-1})_{jj}}{n}} \right),
\end{align*}
where the bound stems from Lemma~\ref{lem:sandwich-variance-error-control}, which  implies,  when $d = o(\sqrt{n})$ and since $\underline{\lambda}$, $\overline{\lambda}$, $q$ and $K_x$ are of constant order, that the difference inside the parenthesis in the above equation is vanishing in $n$.

\section{Confidence Sets for the Regression Parameters}\label{sec::confidence-sets-OLS}
In the previous section, we have proved a Gaussian approximation for the normalized least squares estimator of the projection parameter. To obtain confidence intervals for the coordinates of the projection parameter we further need to know or estimate the quantiles of $\max_{1\le j\le d}\,|G_j|$. In the current section, we describe three approaches. The first two methods do not come with any additional computational cost but are conservative, as they are based on Bonferroni and {\v{S}}id{\'a}k inequalities. The final way is asymptotically exact and makes use of the multiplier bootstrap.  

\subsection{Bonferroni and {\v{S}}id{\'a}k Method}\label{subsec:bonferroni.sidak}

We have proved that
\[
\sup_{t\ge 0}\left|\mathbb{P}\left(\max_{1\le j\le d}\left|\frac{n^{1/2}(\widehat{\beta}_j - \beta_j)}{\sqrt{(\widehat{\Sigma}_n^{-1}\widehat{V}\widehat{\Sigma}_n^{-1})_{jj}}}\right| \le t\right) - \mathbb{P}\left(\max_{1\le j\le d}|G_j| \le t\right)\right| \le r_n,
\]
for some rate $r_n$ and mean zero Gaussian random vector $G\in\mathbb{R}^d$ with unit variance on each coordinate. Taking $t = z_{\alpha/(2d)}$, where $z_{\gamma}$ represents the $(1-\gamma)$-th quantile of the standard Gaussian distribution, by symmetry and the union bound we get that
\begin{align*}
\mathbb{P}\left(\max_{1\le j\le d}\left|\frac{n^{1/2}(\widehat{\beta}_j - \beta_j)}{\sqrt{(\widehat{\Sigma}_n^{-1}\widehat{V}\widehat{\Sigma}_n^{-1})_{jj}}}\right| \le z_{\alpha/(2d)}\right) &\ge \mathbb{P}\left(\max_{1\le j\le d}|G_j| \le z_{\alpha/(2d)}\right) - r_n\\
&\ge (1 - \alpha) - r_n.
\end{align*}
Alternatively, we can sharpen the Bonferroni confidence regions by using instead  {\v{S}}id{\'a}k's inequality~\citep[Corollary 1]{vsidak1967rectangular}, which implies that, for all $t > 0$,
\[
\mathbb{P}\left(\max_{1\le j\le d}|G_j| \le t\right) \ge \prod_{j=1}^d \mathbb{P}(|G_j| \le t).
\]
Thus,
\begin{align*}
\mathbb{P}\left(\max_{1\le j\le d}\left|\frac{n^{1/2}(\widehat{\beta}_j - \beta_j)}{\sqrt{(\widehat{\Sigma}_n^{-1}\widehat{V}\widehat{\Sigma}_n^{-1})_{jj}}}\right| \le z_{(1 - (1-\alpha)^{1/d})/2}\right) &\ge (1 - \alpha) - r_n.
\end{align*}
For any $d \ge 1$, $\alpha\in[0, 1]$, we have $1 - (1 - \alpha)^{1/d} \ge \alpha/d$
and hence, $z_{(1- (1-\alpha)^{1/d})/2} \le z_{\alpha/(2d)}$. Thus, the confidence sets based on  {\v{S}}id{\'a}k's method are always smaller than the ones based on Bonferroni's adjustment and should be preferred. 
The preference of {\v{S}}id{\'a}k's method over Bonferroni's and its possible use have been discussed in~\cite{westfall1993resampling} and~\cite{drton2004model}.

As a final remark, we note that a naive application of both Bonferroni's or Sidak's correction combined with the one-dimensional Berry-Esseen bound will result in a worse scaling of $d$ with respect to $n$. 

\subsection{Bootstrap}
The confidence sets described in the previous section can be conservative because they do not take into account the correlation structure of $G = (G_1, \ldots, G_d)^{\top}$.  
Recall that $(G_1, \ldots, G_d)^{\top}$ has a normal distribution on $\mathbb{R}^d$ with mean zero and unknown covariance matrix given by $\mbox{corr}(\Sigma_n^{-1}V_n\Sigma_n^{-1})$. Hence one way to find the quantiles of $\max_{1\le j\le d}|G_j|$ is to generate Guassian random vectors from the distribution $$N_d(0, \mbox{corr}(\widehat{\Sigma}_n^{-1}\widehat{V}_n\widehat{\Sigma}_n^{-1}))$$ and use the sample quantiles of the maximum norm of these random vectors. This procedure is equivalent to the multiplier bootstrap methodology, detailed  below:
\begin{enumerate}
	\item Define the estimated ``score'' vectors 
	\[
	\widehat{\psi}_i := \widehat{\Sigma}^{-1}X_i(Y_i - X_i^{\top}\widehat{\beta})~\in~\mathbb{R}^d, \quad 1 \leq i \leq n.
	\]
	From the definition of $\widehat{\beta}$, we have $\sum_{i=1}^n \widehat{\psi}_i = 0.$
	\item Fix the number of bootstrap samples $B \ge 1$. For each $1 \leq b \leq B$, generate random vectors $e_i^{(b)}\overset{iid}{\sim} N(0, 1)$, $1\le i\le n$ and compute the bootstrap statistics
	\[
	T_b ~:=~ \max_{1\le j\le d}\frac{|n^{-1}\sum_{i=1}^n e_i^{(b)}\widehat{\psi}_{i,j}|}{\sqrt{(\widehat{\Sigma}_n^{-1}\widehat{V}_n\widehat{\Sigma}_n^{-1}})_{jj}}.
	\]
\end{enumerate}
Conditionally on the data $\mathcal{D}_n := \{(X_i, Y_i):\,1\le i\le n\}$, for each $b$ the vector
\[
\left(\frac{n^{-1}\sum_{i=1}^n e_i^{(b)}\widehat{\psi}_{i,j}}{\sqrt{(\widehat{\Sigma}_n^{-1}\widehat{V}_n\widehat{\Sigma}_n^{-1})_{jj}}}~:\, 1\le j\le d\right)^{\top}
\]
has a normal distribution with mean zero and variance given by $\mbox{corr}(\widehat{\Sigma}_n^{-1}\widehat{V}_n\widehat{\Sigma}_n^{-1})$. This follows from the fact that $$\widehat{\Sigma}_n^{-1}\widehat{V}_n\widehat{\Sigma}_n^{-1} ~=~ \frac{1}{n^2}\sum_{i=1}^n \widehat{\psi}_i\widehat{\psi}_i^{\top}.$$
The next result proves that the empirical distribution of $T_b, 1\le b\le B$ approximates the distribution of $\max_{1\le j\le d}|G_j|$ and hence provides an approximation to the distribution of $\max_{1\le j\le d}|n^{1/2}\widehat{\beta}_j - \beta_j|/(\widehat{\Sigma}_n^{-1}\widehat{V}\widehat{\Sigma}_n)_{jj}^{1/2}$. The proof uses a novel Gaussian comparison bound due to \cite{lopes2020central} and is given in Appendix~\ref{appendix:main.ols}.
\begin{theorem}[Consistency of Multiplier Bootstrap]\label{thm:multiplier-bootstrap-consistency}
Under the assumptions of Theorem~\ref{thm::berry-esseen}, for every $B \ge 1$, there exists a constant $C = C(q, q_x, K_x, K_y, \widebar{\lambda}, \underline{\lambda})$ such that
\begin{align*}
&\sup_{t\ge0}\,\left|\frac{1}{B}\sum_{b=1}^B \mathbbm{1}\{T_b \le t\} - \mathbb{P}\left(\max_{1\le j\le d}|G_j| \le t\right)\right|\\ 
&\;\le\sqrt{\frac{\log(2n)}{2B}}\\ &\;+ C\log(n)\left[\frac{d^{1/2}}{n^{1/2 - 1/q_{xy}}} + \frac{d^{3/4}}{n^{3/4-3/(2q_x)}} + \frac{d^{1/3}}{n^{1/3}} + \frac{d(\log n)^{1/3}}{n^{2/3 - 4/(3q_x)}} + \frac{d^{2/3}(\log n)^{1/6}}{n^{1/2 - 2/(3q_x)}}\right],
\end{align*}
with probability at least 
$$
1 - \frac{d^{q_{xy}/4}}{n^{q_{xy}/4-1/2}} - \frac{d^{q_x/8}}{n^{q_x/8-1/4}}- C\frac{d^{1/3}\log n}{n^{1/3}} - C\frac{d(\log n)^{4/3}}{n^{2/3 - 4/(3q_x)}} - C\frac{d^{2/3}(\log n)^{7/6}}{n^{1/2 - 2/(3q_x)}}.
$$
\end{theorem}


Comparing the above bootstrap bound with the Berry-Esseeen bound from Theorem~\ref{thm::berry-esseen}, we see that deploying the bootstrap procedure does not add additional requirements on the allowable scaling between $d$ and $n$.
We further remark that the usual consistency results for the bootstrap imply closeness of $\mathbb{P}(T_b \le t\big|\mathcal{D}_n)$ to $\mathbb{P}(\max_{1\le j\le d}|G_j| \le t)$, uniformly in $t$. In contrast Theorem~\ref{thm:multiplier-bootstrap-consistency} proves (uniform) closeness of the empirical distribution of multiplier bootstrap $t\mapsto B^{-1}\sum_{b=1}^B \mathbbm{1}\{T_b \le t\}$ to $\mathbb{P}(\max_{1\le j\le d}|G_j| \le t)$, with a rate depending on the number $B$ of the bootstrap repetitions.


\section{Berry--Esseen bounds for Partial Correlations}
\label{section::partial}

It is well known that the vector of projection parameters is related to the
partial correlations between $Y$ and each component of $X$ given
the other components.
This suggests that our results should generalize to
partial correlations.
In this section we confirm this intuition by deriving simultaneous confidence intervals for the partial correlation coefficients of a high-dimensional sub-Gaussian vector.
This is of interest since partial correlations
play an important role in graphical models: see, e.g., \cite{Lau96} and~\cite{drton2004model}. The results in this section sharpen complementary results in \cite{wasserman2014berry}.

Suppose $X_1, \ldots, X_n\in\mathbb{R}^d$ are identically distributed random vectors (but not required to be independent). Let $\Sigma$ denote the $d\times d$ covariance matrix of $X_i$ and let $\Omega = \Sigma^{-1}.$ Then the $d \times d$ matrix of partial correlations is the symmetric matrix given by $\Theta = (\theta_{jk})_{j,k=1,\ldots,d}$ where
\[
\theta_{jk} := -\frac{e_j^{\top}\Sigma^{-1}e_k}{\sqrt{(e_j^{\top}\Sigma^{-1}e_j)(e_k^{\top}\Sigma^{-1}e_k)}},
\]
and $e_1, \ldots, e_d$ denote the canonical basis of $\mathbb{R}^d$. A natural estimator of $\theta_{jk}$ is given by $\widehat{\theta}_{jk}$, defined as
\[
\widehat{\theta}_{jk} := -\frac{e_j^{\top}\widehat{\Sigma}^{-1}e_k}{\sqrt{(e_j^{\top}\widehat{\Sigma}^{-1}e_j)(e_k^{\top}\widehat{\Sigma}^{-1}e_k)}}\quad\mbox{where}\quad \widehat{\Sigma}_n := \frac{1}{n}\sum_{i=1}^n (X_i - \overline{X}_n)(X_i - \overline{X}_n)^{\top},
\]
with $\overline{X}_n$ representing the (sample) average of $X_1, \ldots, X_n$. Notice that in this section, $\Sigma$ and $\widehat{\Sigma}_n$ denote the true and the sample covariance matrices, respectively,  rather than the corresponding Gram matrices as in the previous sections.

Berry--Esseen bound for the partial correlation coefficients can be derived from arguments similar, albeit more involved,  to those used to prove the results in previous sections. We begin by establishing a basic linear representation result for partial correlations. To that effect, we define the intermediate covariance ``estimator'' as
\[
\widetilde{\Sigma} := \frac{1}{n}\sum_{i=1}^n (X_i - \mu_X)(X_i - \mu_X)^{\top},\quad\mbox{where}\quad \mu_X := \mathbb{E}[X_i].
\]
In fact, this is not an estimator because of the unknown vector $\mu_X$ in the definition. For notational convenience, and with a slight abuse of notation, we set
\begin{equation}\label{eq:D-sigma-notation}
\mathcal{D}_n^{\Sigma} ~:=~ \|\Sigma^{-1/2}\widehat{\Sigma}_n\Sigma^{-1/2} - I_d\|_{\mathrm{op}}. 
\end{equation}
It is important to realize that this quantity is different from the corresponding one defined in previous sections because $\widehat{\Sigma}_n$ is the sample covariance matrix. 
The following result mirrors Theorem~\ref{thm:Basic-deter-ineq} as it provides a linear approximation to the difference between the the true and estimated partial correlation coefficients in terms of influence-like functions. 
\begin{theorem}\label{thm:linear-expansion-partial-corr}
Under the assumption that $\mathcal{D}_n^{\Sigma} \le 1/2$, there exists a universal constant $C\in(0, \infty)$ such that
\[
\max_{1\le j \le k\le d}\left|\widehat{\theta}_{jk} - \theta_{jk} + \frac{1}{n}\sum_{i=1}^n \psi_{jk}(X_i)\right| \le C(\mathcal{D}_n^{\Sigma})^2 + \|\overline{X}_n - \mu_X\|_{\Sigma^{-1}}^2,
\]
 where 
\begin{equation}
\begin{split}
\frac{1}{n}\sum_{i=1}^n \psi_{jk}(X_i) ~&:=~ \frac{e_j^{\top}\Sigma^{-1}(\widetilde{\Sigma} - \Sigma)\Sigma^{-1}e_k}{\sqrt{(e_j^{\top}\Sigma^{-1}e_j)(e_k^{\top}\Sigma^{-1}e_k)}}\\ 
&\qquad- \frac{\theta_{jk}}{2}\left[\frac{e_j^{\top}\Sigma^{-1}(\widetilde{\Sigma} - \Sigma)\Sigma^{-1}e_j}{e_j^{\top}\Sigma^{-1}e_j} + \frac{e_k^{\top}\Sigma^{-1}(\widetilde{\Sigma} - \Sigma)\Sigma^{-1}e_k}{e_k^{\top}\Sigma^{-1}e_k}\right].
\end{split}
\end{equation}
\end{theorem}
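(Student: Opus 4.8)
The plan is to view the left-hand side as the second-order remainder in a first-order (von~Mises) expansion of the smooth map sending a covariance matrix to a partial correlation, and to absorb that remainder into the two terms on the right. First I would pass to whitened coordinates: set $a_\ell := \Sigma^{-1/2}e_\ell$, $u_\ell := a_\ell/\|a_\ell\|$ (a unit vector), and $\widehat S := \Sigma^{-1/2}\widehat\Sigma_n\Sigma^{-1/2}$, so that $\|\widehat S - I_d\|_{\mathrm{op}} = \mathcal{D}_n^\Sigma \le 1/2$. Using $e_\ell^\top\Sigma^{-1}e_m = a_\ell^\top a_m$ and $e_\ell^\top\widehat\Sigma_n^{-1}e_m = a_\ell^\top\widehat S^{-1}a_m$, both coefficients become scale-invariant functions of $u_j,u_k$, namely $\widehat\theta_{jk} = \Phi_{jk}(\widehat S)$ and $\theta_{jk} = \Phi_{jk}(I_d)$ with
\[
\Phi_{jk}(S) ~:=~ -\frac{u_j^\top S^{-1}u_k}{\sqrt{(u_j^\top S^{-1}u_j)\,(u_k^\top S^{-1}u_k)}}.
\]
The point of this reduction is that the smoothness of $\Phi_{jk}$ is then governed by universal constants, with no spectral quantity of $\Sigma$ (hence nothing depending on $d$ or on the pair $(j,k)$) surviving; this is what makes the constant $C$ universal.

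Next I would record a uniform $C^2$ bound on $\Phi_{jk}$. On $\mathcal{U} := \{S : \|S - I_d\|_{\mathrm{op}} \le 1/2\}$ one has $2/3 \le \lambda_{\min}(S^{-1}) \le \lambda_{\max}(S^{-1}) \le 2$, so the denominators $u_j^\top S^{-1}u_j$ and $u_k^\top S^{-1}u_k$ lie in $[2/3,2]$ (bounded away from $0$) and $|u_j^\top S^{-1}u_k| \le 2$. Writing $\Phi_{jk}(S) = h\!\left(u_j^\top S^{-1}u_k,\, u_j^\top S^{-1}u_j,\, u_k^\top S^{-1}u_k\right)$ with $h(x,y,z) = -x(yz)^{-1/2}$ --- smooth, with first and second partials bounded for $|x|\le 2$, $y,z\in[2/3,2]$ --- and using that $S \mapsto S^{-1}$ has first and second Fr\'echet derivatives bounded by $\|S^{-1}\|_{\mathrm{op}}^2$ and $2\|S^{-1}\|_{\mathrm{op}}^3$ on $\mathcal{U}$, the chain and product rules yield $\|D\Phi_{jk}(S)\| + \|D^2\Phi_{jk}(S)\| \le C$ on $\mathcal{U}$ for a universal $C$, uniformly in $j,k,d$.

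Since the segment $[I_d, \widehat S]$ lies in $\mathcal{U}$, Taylor's theorem with Lagrange remainder gives $\bigl|\widehat\theta_{jk} - \theta_{jk} - D\Phi_{jk}(I_d)[\widehat S - I_d]\bigr| \le C\|\widehat S - I_d\|_{\mathrm{op}}^2 = C(\mathcal{D}_n^\Sigma)^2$. A direct differentiation ($S \mapsto S^{-1}$ contributes $H \mapsto -H$ at $I_d$, and differentiating the ratio supplies the rest) gives
\[
D\Phi_{jk}(I_d)[H] ~=~ u_j^\top H u_k \,+\, \tfrac{1}{2}\theta_{jk}\bigl(u_j^\top H u_j + u_k^\top H u_k\bigr);
\]
substituting $H = \widehat S - I_d = \Sigma^{-1/2}(\widehat\Sigma_n - \Sigma)\Sigma^{-1/2}$ converts $u_j^\top H u_k$ back to $e_j^\top\Sigma^{-1}(\widehat\Sigma_n - \Sigma)\Sigma^{-1}e_k / \sqrt{(e_j^\top\Sigma^{-1}e_j)(e_k^\top\Sigma^{-1}e_k)}$, and likewise for the diagonal terms. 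Finally, inserting the rank-one identity $\widehat\Sigma_n = \widetilde\Sigma - vv^\top$ with $v := \overline{X}_n - \mu_X$ replaces $\widehat\Sigma_n - \Sigma$ by $\widetilde\Sigma - \Sigma$ throughout, and a term-by-term comparison of the coefficients identifies the result with $\tfrac{1}{n}\sum_{i=1}^n\psi_{jk}(X_i)$, plus remainders of the form $(u_j^\top\Sigma^{-1/2}v)(u_k^\top\Sigma^{-1/2}v)$ and $\theta_{jk}(u_j^\top\Sigma^{-1/2}v)^2$, each of absolute value at most $\|v\|_{\Sigma^{-1}}^2 = \|\overline{X}_n - \mu_X\|_{\Sigma^{-1}}^2$ by Cauchy--Schwarz (using $\|u_\ell\| = 1$ and $|\theta_{jk}| \le 1$). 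Combining the two displays and taking the maximum over $j \le k$ --- legitimate since every bound is uniform in $(j,k)$ and $d$ --- yields the stated inequality; if $\|\overline{X}_n - \mu_X\|_{\Sigma^{-1}}^2$ exceeds a fixed constant the inequality is trivial, since its left-hand side is then $O(1) + O(\|\overline{X}_n - \mu_X\|_{\Sigma^{-1}}^2)$.

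The only step that is not purely mechanical is the uniform $C^2$ bound on $\Phi_{jk}$: one must check it genuinely does not degrade with $d$ or with the choice of $(j,k)$. This is precisely what normalizing the vectors $\Sigma^{-1/2}e_\ell$ to the unit vectors $u_\ell$ buys --- it keeps the normalizing denominators bounded away from $0$ --- in combination with the hypothesis $\mathcal{D}_n^\Sigma \le 1/2$, which confines the spectrum of $\widehat S^{-1}$ to $[2/3,2]$. An equivalent route expands the $\widetilde\Sigma$-based analogue $\widetilde\theta_{jk}$ of $\widehat\theta_{jk}$ around $\theta_{jk}$ and then passes from $\widetilde\Sigma$ to $\widehat\Sigma_n$ via local Lipschitzness of $S \mapsto \Phi_{jk}(S)$, at cost $C\|\widehat S - \widetilde S\|_{\mathrm{op}} = C\|\overline{X}_n - \mu_X\|_{\Sigma^{-1}}^2$; the bookkeeping is essentially the same. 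Everything else --- the explicit derivative formula, the rank-one correction, Cauchy--Schwarz --- is routine.
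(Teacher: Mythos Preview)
Your approach is correct and delivers the same bound, but it is organized differently from the paper's proof. The paper proceeds by explicit algebra: it writes $\widehat\theta_{jk}-\theta_{jk}$ in terms of $\widehat\omega_{jk}:=e_j^\top\widehat\Sigma_n^{-1}e_k$ and $\omega_{jk}:=e_j^\top\Sigma^{-1}e_k$, expands using the elementary identities $\sqrt{a}-1=(a-1)/2-(\sqrt{a}-1)^2/2$ and $a'b'-1=(a'-1)+(b'-1)+(a'-1)(b'-1)$ to isolate linear and quadratic parts, and only then invokes a separate lemma linearizing $\widehat\Sigma_n^{-1}-\Sigma^{-1}$ (namely $\|\Sigma^{1/2}\{\widehat\Sigma_n^{-1}-\Sigma^{-1}+\Sigma^{-1}(\widetilde\Sigma-\Sigma)\Sigma^{-1}\}\Sigma^{1/2}\|_{\mathrm{op}}\le \|\overline X_n-\mu_X\|_{\Sigma^{-1}}^2+(\mathcal{D}_n^\Sigma)^2/(1-\mathcal{D}_n^\Sigma)$). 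You instead collapse both expansions into a single Taylor step for the composite map $S\mapsto\Phi_{jk}(S)$ after whitening, with the inversion handled through the chain rule; the passage from $\widehat\Sigma_n$ to $\widetilde\Sigma$ then reduces to the rank-one identity $\widehat\Sigma_n=\widetilde\Sigma-(\overline X_n-\mu_X)(\overline X_n-\mu_X)^\top$ applied to the \emph{derivative}, rather than to $\widehat\Sigma_n^{-1}-\Sigma^{-1}$. Your route is more conceptual and makes the dimension-free nature of the constant transparent (it is exactly the normalization $u_\ell=a_\ell/\|a_\ell\|$ that keeps the $C^2$ bound uniform), at the price of not producing fully explicit numerical constants; the paper's hand expansion is more elementary and traceable line by line.

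One small point: your remainder from the rank-one step carries a factor up to $2$ in front of $\|\overline X_n-\mu_X\|_{\Sigma^{-1}}^2$ (one contribution from $u_j^\top H u_k$ and one from the $\tfrac{\theta_{jk}}{2}$ diagonal terms). The paper's own argument incurs the same factor, so this is a feature of the statement rather than a defect of your proof; your ``trivial case'' remark does not resolve it and can be dropped. Up to this cosmetic constant, your argument is complete.
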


\begin{remark}
The functions $\psi_{jk} \colon \mathbb{R}^d \rightarrow \mathbb{R}$, $1 \leq j \leq k \leq d$, are linear, since the right hand side of the last expression is an average, because $\widetilde{\Sigma}$ is itself an average.
Furthermore, each  $\psi_{jk}(X_i)$ has zero expectation.
\end{remark}

Using Theorem~\ref{thm:linear-expansion-partial-corr}, we derive a high-dimensional central limit approximation  for the properly normalized partial correlation coefficients~\eqref{eq:proper-normalized-partial-correlation}. Towards that end, we note that, for each $j \leq k$, the function  $\psi_{jk}(\cdot)$ from Theorem~\ref{thm:linear-expansion-partial-corr} can be written as
\begin{equation}\label{eq:def-psi-jk}
\begin{split}
\psi_{jk}(x) &= -\bigg\{a_j(x)a_k(x) - \mathbb{E}[a_j(X_1)a_k(X_1)]\bigg\}\\ 
&\qquad- \frac{\theta_{jk}}{2}\bigg\{a_j^2(x) + a_k^2(x) - \mathbb{E}[a_j^2(X_1) + a_k^2(X_1)]\bigg\},
\end{split}
\end{equation}
where
\begin{equation}\label{eq:ajx-function}
x \in \mathbb{R}^d \mapsto a_j(x) ~:=~ \frac{(x - \mu_X)^{\top}\Sigma^{-1}e_j}{(e_j^{\top}\Sigma^{-1}e_j)^{1/2}}.
\end{equation}
Now, for a fixed $x \in \mathbb{R}^d$, define the plug-in estimator of $a_j(x)$ as
\begin{equation}\label{eq:ajhatx-function}
\widehat{a}_j(x) ~:=~ \frac{(x - \widebar{X}_n)^{\top}\widehat{\Sigma}^{-1}e_j}{(e_j^{\top}\widehat{\Sigma}^{-1}e_j)^{1/2}}.
\end{equation}
In turn, this estimator leads to an estimator $\widehat{\psi}_{jk}(x)$ of $\psi_{jk}(x)$ by replacing $a_j(x)$ and $a_k(x)$ by $\widehat{a}_j(x)$ and $\widehat{a}_k(x)$, respectively.  Formally, for any $x \in \mathbb{R}$, we let
\begin{equation}\label{eq:def-widehat-psi-jk}
\begin{split}
\widehat{\psi}_{jk}(x) ~&:=~ -\bigg\{\widehat{a}_j(x)\widehat{a}_k(x) - \mathbb{E}_n[\widehat{a}_j(X)\widehat{a}_k(X)]\bigg\}\\ 
&\qquad- \frac{\widehat{\theta}_{jk}}{2}\bigg\{\widehat{a}_j^2(x) + \widehat{a}_k^2(x) - \mathbb{E}_n[\widehat{a}_j^2(X) + \widehat{a}_k^2(X)]\bigg\},
\end{split}
\end{equation}
where, for any arbitrary, possibly random, function $f \colon \mathbb{R}^d \rightarrow \mathbb{R}$,  we set $\mathbb{E}_n[f(X)] := n^{-1}\sum_{i=1}^n f(X_i)$. Because the asymptotic variance of $\sqrt{n}(\widehat{\theta}_{jk} - \theta_{jk})$ is $\mathbb{E}[\psi_{jk}^2(X_1)]$ and its plug-in estimator is $\sum_{i=1}^n \widehat{\psi}_{jk}^2(X_i)/n$, a proper normalization of the partial correlation coefficient is given by 
\[
\frac{\sqrt{n}(\widehat{\theta}_{jk} - \theta_{jk})}{\widehat{\zeta}_{jk}}, \quad \text{where} \quad 
\widehat{\zeta}_{jk}^2 ~:=~ {\frac{1}{n}\sum_{i=1}^n \widehat{\psi}_{jk}^2(X_i)}.
\]

We are now ready to state the main result of this section, a high-dimensional  Berry-Esseen bound for the partial correlations of sub-Gaussian vectors.In deriving this bound, we have taken extra care in exhibiting an explicit dependence on the minimal variance of the $\psi_{jk}(X_i)$'s.  We remark here that the sub-Gaussianity assumption can be relaxed to appropriate moment assumptions, and the calculations used to establish the results of Section~\ref{section::explicit} can be directly adapted. For simplicity, we refrain from pursuing these laborious generalizations. 

Define
\[
\zeta_{\min} := \min_{1 \leq j < k \leq d} \left(\mathbb{E}[\psi^2_{jk}(X_i)]\right)^{1/2}.
\]
\begin{theorem}\label{thm:Berry-Esseen-bound-partial-corr}
Suppose $X_1, \ldots, X_n$ are independent and identically distributed random vectors such that
\begin{equation}\label{eq:centered-subGaussian-x}
\mathbb{E}\left[\exp\left(\frac{|u^{\top}\Sigma^{-1/2}(X_i - \mu_X)|^2}{2K_x^2}\right)\right] \le 2\quad\mbox{for all}\quad u\in S^{d-1},
\end{equation}
for some constant $K_x\in(0, \infty).$ Further assume that $d \leq \sqrt{n}$. Then, there exists a constant $C = C(K_x, \zeta_{\min}) \in(0,\infty)$ depending only on $K_x$ and $\zeta_{\min}$  such that 
\begin{align*}
&\sup_{t\ge0}\left|\mathbb{P}\left(\max_{1\le j < k\le d}\left|\frac{\sqrt{n}(\widehat{\theta}_{jk} - \theta_{jk})}{\widehat{\zeta}_{jk}}\right| \le t\right) - \mathbb{P}\left(\max_{1\le j < k\le d}|G_{jk}| \le t\right)\right|\\ 
&\qquad\le C \left( \frac{(\log(d\vee n))^{5/6}}{n^{1/6}} + \eta_n\sqrt{\log(ed)} \right),
\end{align*}
for a mean zero Gaussian vector $(G_{jk})_{1\le j < k\le d}$ with covariance satisfying $\mathrm{cov}(G_{jk}, G_{j'k'}) = \mathrm{corr}(\psi_{jk}(X_1), \psi_{j'k'}(X_1))$ and where
\[
\eta_n := \frac{K_x^4}{\zeta_{\min}}\frac{d + \log n}{\sqrt{n}} ~+~ \left(\frac{K_x^8}{\zeta_{\min}^2} + \frac{K_x^6}{\zeta^3_{\min}}\right)\sqrt{\frac{(d + \log n)\log(dn)}{n}}.
\]

\end{theorem}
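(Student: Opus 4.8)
The plan is to reproduce, for partial correlations, the three-step blueprint used for the OLS estimator in Theorems~\ref{thm:Berry-Esseen-OLS} and~\ref{thm::berry-esseen}: linearize, replace the unknown normalizer by its plug-in on a high-probability event, bound the Berry--Esseen error for the linear part via a high-dimensional CLT over hyper-rectangles, and assemble everything through the elementary inequality~\eqref{eq:basic-identity-BE}. Concretely, set
\[
U := \max_{1\le j<k\le d}\left|\frac{\sqrt n(\widehat\theta_{jk}-\theta_{jk})}{\widehat\zeta_{jk}}\right|, \qquad W := \max_{1\le j<k\le d}\left|\frac{n^{-1/2}\sum_{i=1}^n \psi_{jk}(X_i)}{\zeta_{jk}}\right|,
\]
where $\zeta_{jk} := (\mathbb{E}[\psi_{jk}^2(X_1)])^{1/2}$, and let $\Psi(t) := \mathbb{P}(\max_{1\le j<k\le d}|G_{jk}|\le t)$. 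The goal is to exhibit a random variable $R$ with $|U-W|\le R$ on a high-probability event, with $\mathbb{P}(R>\eta_n)$ (together with the probability of the exceptional event) dominated by $\eta_n$; a Berry--Esseen bound $\Delta_n := \sup_{t}|\mathbb{P}(W\le t)-\Psi(t)|\lesssim (\log(d\vee n))^{5/6}/n^{1/6}$; and a Gaussian anti-concentration estimate $\sup_{t}[\Psi(t+\varepsilon)-\Psi(t-\varepsilon)]\lesssim \varepsilon\sqrt{\log d}$. Applying~\eqref{eq:basic-identity-BE} with $\varepsilon$ of the order of the bound on $R$ then yields the claim, with all constants collected into $C(K_x)$, and the assumption $d\le\sqrt n$ ensures the various ``smaller than $1/2$'' conditions invoked below (otherwise the bound exceeds $1$ and the statement is vacuous, cf.\ Remark~\ref{rem:scaling-in-d}).

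Step one (linearization). Theorem~\ref{thm:linear-expansion-partial-corr} already gives, on $\{\mathcal{D}_n^\Sigma\le 1/2\}$, that $\max_{j\le k}|\widehat\theta_{jk}-\theta_{jk}+n^{-1}\sum_i\psi_{jk}(X_i)|\le C(\mathcal{D}_n^\Sigma)^2+\|\overline X_n-\mu_X\|_{\Sigma^{-1}}^2$. Under the sub-Gaussian assumption~\eqref{eq:centered-subGaussian-x}, standard matrix and vector concentration (as in the first inequality of Theorem~\ref{thm:main-rates-thm-OLS-independence}) yields $\mathcal{D}_n^\Sigma\lesssim K_x^2\sqrt{(d+\log n)/n}$ and $\|\overline X_n-\mu_X\|_{\Sigma^{-1}}^2\lesssim K_x^2(d+\log n)/n$ with probability at least $1-(d+1)/n-\sqrt{d/n}$; multiplying by $\sqrt n$ produces a contribution of order $K_x^4(d+\log n)/\sqrt n$, which after dividing by $\widehat\zeta_{jk}\ge\zeta_{jk}/2\ge\zeta_{\min}/2$ (justified in the next step) gives the first term $\tfrac{K_x^4}{\zeta_{\min}}\tfrac{d+\log n}{\sqrt n}$ of $\eta_n$.

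Step two (replacing $\zeta_{jk}$ by $\widehat\zeta_{jk}$) is the main obstacle, exactly as the third inequality of Theorem~\ref{thm:main-rates-thm-OLS-independence} was in the OLS case, because $\widehat\psi_{jk}$ in~\eqref{eq:def-widehat-psi-jk} is a nonlinear function of $\widehat\Sigma^{-1}$, $\overline X_n$ and $\widehat\theta_{jk}$, and the squared influence functions $\psi_{jk}^2$ are only sub-Weibull of order $1/2$. One must show $\max_{j<k}|\widehat\zeta_{jk}^2/\zeta_{jk}^2-1|$ is small with high probability. Write $\widehat\zeta_{jk}^2-\zeta_{jk}^2 = \big(n^{-1}\sum_i\psi_{jk}^2(X_i)-\mathbb{E}[\psi_{jk}^2(X_1)]\big) + n^{-1}\sum_i\big(\widehat\psi_{jk}^2(X_i)-\psi_{jk}^2(X_i)\big)$. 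For the first bracket, from~\eqref{eq:def-psi-jk}--\eqref{eq:ajx-function} each $\psi_{jk}(X_i)$ is a quadratic form in the sub-Gaussian vector $\Sigma^{-1/2}(X_i-\mu_X)$, so $\psi_{jk}^2(X_i)$ is sub-Weibull$(1/2)$ with scale $\lesssim K_x^4$; a Fuk--Nagaev/sub-Weibull concentration inequality plus a union bound over the $\binom{d}{2}$ pairs gives a deviation of order $K_x^4\sqrt{\log(dn)/n}$ up to lower-order terms. For the second bracket one tracks how the errors in $\widehat a_j,\widehat a_k$ (governed by $\mathcal{D}_n^\Sigma$ and $\|\overline X_n-\mu_X\|_{\Sigma^{-1}}$ via~\eqref{eq:ajhatx-function}) and in $\widehat\theta_{jk}$ (governed by Step one) propagate through~\eqref{eq:def-widehat-psi-jk}, uniformly in $(j,k)$. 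Dividing by $\zeta_{jk}^2\ge\zeta_{\min}^2$, taking square roots and using $|\sqrt{1+x}-1|\le|x|$ gives $\max_{j<k}|\widehat\zeta_{jk}/\zeta_{jk}-1|\lesssim\big(K_x^8/\zeta_{\min}^2+K_x^6/\zeta_{\min}^3\big)\sqrt{(d+\log n)\log(dn)/n}$, the second term of $\eta_n$; in particular $\widehat\zeta_{jk}\ge\zeta_{jk}/2$ on that event. Combining with Step one, and using that $\max_{j<k}|\sqrt n(\widehat\theta_{jk}-\theta_{jk})/\zeta_{jk}|$ is $O_P(\sqrt{\log d})$ to convert the multiplicative error in $\widehat\zeta_{jk}$ into an additive one (exactly as in Corollary~\ref{cor:Max-Statistic-Correct-Scaling} and the event~\eqref{eq:key.event}), one obtains $R=|U-W|\lesssim\eta_n$ on the intersection of the above events.

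Step three (Berry--Esseen for $W$ and conclusion). The vector $(n^{-1/2}\sum_i\psi_{jk}(X_i)/\zeta_{jk})_{j<k}$ is a normalized sum of i.i.d.\ mean-zero vectors in $\mathbb{R}^{\binom{d}{2}}$ with unit-variance coordinates that are sub-Weibull$(1/2)$ with $K_x$-dependent scale, so all the moment quantities entering high-dimensional Berry--Esseen bounds over hyper-rectangles are under control; applying the results of~\cite{Chern17} and~\cite{koike2019notes} (for a max of absolute values one intersects $2\binom{d}{2}$ half-spaces) yields $\Delta_n\lesssim(\log(d\vee n))^{5/6}/n^{1/6}$ with limiting covariance $\mathrm{corr}(\psi_{jk}(X_1),\psi_{j'k'}(X_1))$, as stated. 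Gaussian anti-concentration (as used for $\Phi_{AC}$ in Section~\ref{section::determiniswtic}) bounds $\sup_t[\Psi(t+\varepsilon)-\Psi(t-\varepsilon)]$ by $C\varepsilon\sqrt{\log d}$, which is absorbed into the $\eta_n$ term after choosing $\varepsilon$ as in Step two; the exceptional probability from Steps one--two is of order $(d+\log n)/n$, also dominated by $\eta_n$. Plugging the three estimates into~\eqref{eq:basic-identity-BE} and collecting constants into $C(K_x)$ completes the proof. I expect Step two to be the crux: obtaining the sharp $\sqrt{\log(dn)/n}$ rate with the explicit $\zeta_{\min}$-dependence, uniformly over all $\binom{d}{2}$ pairs, requires a careful Fuk--Nagaev-type argument for the sub-Weibull$(1/2)$ summands analogous to the one behind the third inequality of Theorem~\ref{thm:main-rates-thm-OLS-independence}.
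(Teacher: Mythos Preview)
Your proposal is correct and follows essentially the same three-step route as the paper: linearize via Theorem~\ref{thm:linear-expansion-partial-corr}, control $\max_{j<k}|\widehat\zeta_{jk}/\zeta_{jk}-1|$ by splitting $\widehat\zeta_{jk}^2-\zeta_{jk}^2$ into the empirical-vs-population piece $n^{-1}\sum_i\psi_{jk}^2(X_i)-\mathbb{E}[\psi_{jk}^2]$ (handled by sub-Weibull$(1/2)$ concentration) and the plug-in piece $n^{-1}\sum_i(\widehat\psi_{jk}^2-\psi_{jk}^2)$ (handled by propagating the errors in $\widehat a_j,\widehat\theta_{jk}$ through~\eqref{eq:def-widehat-psi-jk}), then apply the high-dimensional CLT of~\cite{koike2019notes} together with Nazarov's anti-concentration inequality. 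One small slip: in Step three you call the summands $\psi_{jk}(X_i)/\zeta_{jk}$ sub-Weibull$(1/2)$, but since each $\psi_{jk}$ is a (centered) product of two sub-Gaussian variables $a_j(X_i),a_k(X_i)$, it is in fact sub-exponential (sub-Weibull$(1)$), which is what the paper invokes when applying Theorem~2.1(a) of~\cite{koike2019notes}; this only helps, so the argument goes through unchanged.
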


\begin{remark}
Unlike in Theorem~\ref{thm:linear-expansion-partial-corr}, in Theorem~\ref{thm:Berry-Esseen-bound-partial-corr}, we only consider the maximum over $1\le j < k \le d$ because, by construction, $\widehat{\theta}_{jj} = \theta_{jj} = 1$ for all $1\le j\le d$. 

Ignoring log terms, the upper bound on the distributional approximation holds true when $d = o(\sqrt{n})$. Should the  assumption of sub-Gaussianity be relaxed to moment bounds only, the requirement on $d$ will depend on the number of moments of the $X_i$'s.
\end{remark}

\begin{remark}\label{rem:fast-rate-partial-correlation}
Theorem~\ref{thm:Berry-Esseen-bound-partial-corr} provides an $n^{-1/6}$ rate of convergence in the central limit theorem. This can be improved to an $n^{-1/2}$ rate if we assume that a certain correlation matrix has a minimum eigenvalue bounded away from zero. Let $\lambda_{\star}^2$ be the smallest eigenvalue of the correlation matrix of the random vector $(\psi_{jk}(X_i)/\zeta_{jk}: 1\le j < k \le d)$. Then under the assumptions of Theorem~\ref{thm:Berry-Esseen-bound-partial-corr} can be improved to
\begin{align*}
&\sup_{t\ge0}\left|\mathbb{P}\left(\max_{1\le j < k\le d}\left|\frac{\sqrt{n}(\widehat{\theta}_{jk} - \theta_{jk})}{\widehat{\zeta}_{jk}}\right| \le t\right) - \mathbb{P}\left(\max_{1\le j < k\le d}|G_{jk}| \le t\right)\right|\\ 
&\qquad\le C \left( \frac{(\log(d))^{4}\log(n)}{n^{1/2}\lambda_{\star}^2} + \eta_n\sqrt{\log(ed)} \right),
\end{align*}
for $\eta_n$ defined in Theorem~\ref{thm:Berry-Esseen-bound-partial-corr}. 
See the proof of Theorem~\ref{thm:Berry-Esseen-bound-partial-corr} in Appendix~\ref{appendix:main.partial} for the proof of this improvement.
\end{remark}

\begin{remark}
Although Theorem~\ref{thm:Berry-Esseen-bound-partial-corr} is proved for independent random vectors, it is easy to obtain a result similar to Theorem~\ref{thm:Berry-Esseen-OLS} for arbitrary random vectors. 
\end{remark}

For inference, one needs to estimate the quantiles of $\max_{1\le j < k\le d}|G_{jk}|$ in order to produce simultaneous confidence intervals for the partial correlation coefficients. An easy solution is to simply apply Bonferroni's or {\v{S}}id{\'a}k's correction for multiple parameters -- in this case ${d \choose 2} $--  as described above in Section~\ref{subsec:bonferroni.sidak}.

Alternatively, (asymptotically) sharper results may be obtained with the multiplier bootstrap, described as follows:
\begin{enumerate}
\item Define the estimated ``score'' vectors $\widehat{\psi}_{jk}(X_i)$ as above. From the definition, it follows that $\sum_{i=1}^n \widehat{\psi}_{jk}(X_i) = 0$ for all $1\le j < k\le d$.
\item Fix the number of bootstrap samples $B \ge 1$. Generate random vectors $e_i^{(b)}\overset{iid}{\sim}N(0, 1)$, $1\le i\le n, 1\le b\le B$ and compute the bootstrap statistics
\[
T_b ~:=~ \max_{1\le j < k\le d}\frac{|n^{-1/2}\sum_{i=1}^n e_i^{b}\widehat{\psi}_{jk}(X_i)|}{\sqrt{n^{-1}\sum_{i=1}^n \widehat{\psi}_{jk}^2(X_i)}}.
\]
\end{enumerate}
The following result proves that the empirical distribution of $T_b, 1\le b\le B$ approximates the distribution of $\max_{1\le j < k\le d}|G_{jk}|$.
\begin{theorem}[Consistency of Multiplier Bootstrap]\label{eq:multplier-bootstrap-consistency-partial-corr}
Under the assumptions of Theorem~\ref{thm:Berry-Esseen-bound-partial-corr}, for every $B\ge1$, we have with probability at least $1 - C/n$,
\begin{align*}
&\sup_{t\ge0}\left|\frac{1}{B}\sum_{b=1}^B \mathbbm{1}\{T_b \le t\} - \mathbb{P}\left(\max_{1\le j < k\le d}|G_{jk}| \le t\right)\right|\\ 
&\quad\le \sqrt{\frac{\log(2n)}{2B}} + CK_x^{2}(\log d)^{{2}/{3}}\left(\frac{d + \log n}{n}\right)^{{1}/{6}},
\end{align*}
whenever the right hand side is less than 1.
\end{theorem}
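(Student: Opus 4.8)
The plan is to adapt the two-step argument behind Theorem~\ref{thm:multiplier-bootstrap-consistency}, separating the error from using only $B$ bootstrap replicates from the error in estimating the covariance structure of the Gaussian limit. Conditionally on the data $\mathcal{D}_n=\{X_i:1\le i\le n\}$, each bootstrap statistic $T_b$ equals $\max_{1\le j<k\le d}|\widehat{G}_{jk}^{(b)}|$ for a Gaussian vector $\widehat{G}^{(b)}=(\widehat{G}_{jk}^{(b)})\sim N_{\binom{d}{2}}(0,\widehat{R}_n)$, where (using $\sum_{i=1}^n\widehat{\psi}_{jk}(X_i)=0$) the matrix $\widehat{R}_n$ is the correlation matrix of $\big(n^{-1}\sum_{i=1}^n\widehat{\psi}_{jk}(X_i)\widehat{\psi}_{j'k'}(X_i)\big)_{j<k,\,j'<k'}$. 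Writing $\widehat{G}=(\widehat{G}_{jk})\sim N(0,\widehat{R}_n)$, the triangle inequality splits the left-hand side as
\[
\sup_{t\ge0}\Big|\tfrac{1}{B}\sum_{b=1}^B\mathbbm{1}\{T_b\le t\}-\mathbb{P}\big(\max_{1\le j<k\le d}|G_{jk}|\le t\big)\Big|\;\le\;\mathrm{(I)}+\mathrm{(II)},
\]
with $\mathrm{(I)}=\sup_{t\ge0}\big|\tfrac{1}{B}\sum_{b}\mathbbm{1}\{T_b\le t\}-\mathbb{P}(\max_{j<k}|\widehat{G}_{jk}|\le t\mid\mathcal{D}_n)\big|$ and $\mathrm{(II)}=\sup_{t\ge0}\big|\mathbb{P}(\max_{j<k}|\widehat{G}_{jk}|\le t\mid\mathcal{D}_n)-\mathbb{P}(\max_{j<k}|G_{jk}|\le t)\big|$. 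Conditionally on $\mathcal{D}_n$ the $T_b$ are i.i.d., so $\mathrm{(I)}$ is the Kolmogorov distance between an empirical c.d.f.\ of a size-$B$ i.i.d.\ sample and its population c.d.f.; the Dvoretzky--Kiefer--Wolfowitz inequality gives $\mathbb{P}(\mathrm{(I)}>\varepsilon\mid\mathcal{D}_n)\le 2e^{-2B\varepsilon^2}$, and taking $\varepsilon=\sqrt{\log(2n)/(2B)}$ yields $\mathrm{(I)}\le\sqrt{\log(2n)/(2B)}$ outside an event of probability at most $1/n$. This produces the first term of the claimed bound and contributes $1/n$ to the exceptional probability.

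The term $\mathrm{(II)}$ is a Gaussian-versus-Gaussian comparison over symmetric hyper-rectangles in $\mathbb{R}^{\binom{d}{2}}$, so the Gaussian comparison inequality of Chernozhukov--Chetverikov--Kato \citep{chernozhukov2017detailed} (already invoked in the proof of Theorem~\ref{thm:Berry-Esseen-bound-partial-corr}) gives $\mathrm{(II)}\le C\,\Delta_n^{1/3}(\log d)^{2/3}$, where $\Delta_n:=\max_{(j,k),(j',k')}\big|(\widehat{R}_n)_{(jk),(j'k')}-\mathrm{corr}(\psi_{jk}(X_1),\psi_{j'k'}(X_1))\big|$, using $\binom{d}{2}\le d^2$ and that all entries of both correlation matrices lie in $[-1,1]$. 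Everything thus reduces to showing that, with probability at least $1-C/n$,
\[
\Delta_n\;\lesssim\;K_x^{2}\sqrt{\frac{d+\log n}{n}},
\]
since raising this to the power $1/3$ produces the second term of the bound and a union bound with the event from $\mathrm{(I)}$ completes the argument.

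To control $\Delta_n$ I would first establish $\min_{j<k}\widehat{\zeta}_{jk}^2\ge\zeta_{\min}^2/2$ with probability at least $1-C/n$, so that each ratio defining $\widehat{R}_n$ is a Lipschitz function (with constant governed by $\zeta_{\min}$) of the corresponding entries of $n^{-1}\sum_i\widehat{\psi}_{jk}(X_i)\widehat{\psi}_{j'k'}(X_i)$; it then suffices to bound $\max_{(j,k),(j',k')}\big|n^{-1}\sum_i\widehat{\psi}_{jk}(X_i)\widehat{\psi}_{j'k'}(X_i)-\mathbb{E}[\psi_{jk}(X_1)\psi_{j'k'}(X_1)]\big|$. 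I would decompose this into a \emph{sampling part}, $n^{-1}\sum_i\psi_{jk}(X_i)\psi_{j'k'}(X_i)-\mathbb{E}[\psi_{jk}(X_1)\psi_{j'k'}(X_1)]$ --- a centered empirical average of products of sub-exponential variables, since each $a_j(X_i)=(\Sigma^{-1/2}(X_i-\mu_X))^\top u_j$ is $K_x$-sub-Gaussian and $\psi_{jk}$ is a linear combination (with coefficients bounded by $1$) of pairwise products of the $a_j$'s --- which is bounded uniformly over the $\le d^4$ index pairs by a maximal inequality for sub-Weibull$(1/2)$ summands, of order $K_x^{O(1)}\sqrt{\log(dn)/n}$ plus lower-order terms; and a \emph{plug-in part}, $n^{-1}\sum_i\big(\widehat{\psi}_{jk}\widehat{\psi}_{j'k'}-\psi_{jk}\psi_{j'k'}\big)(X_i)$, which is where the factor $d$ (rather than $\log d$) appears. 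For the plug-in part I would expand $\widehat{a}_j(X_i)-a_j(X_i)$ and $\widehat{\theta}_{jk}-\theta_{jk}$ in terms of $\mathcal{D}_n^{\Sigma}=\|\Sigma^{-1/2}\widehat{\Sigma}_n\Sigma^{-1/2}-I_d\|_{\mathrm{op}}$ and $\|\overline{X}_n-\mu_X\|_{\Sigma^{-1}}$, exactly as in the proof of Theorem~\ref{thm:linear-expansion-partial-corr}, and then combine the sub-Gaussian concentration bounds $\mathcal{D}_n^{\Sigma}\lesssim K_x^{2}\sqrt{(d+\log n)/n}$ and $\|\overline{X}_n-\mu_X\|_{\Sigma^{-1}}\lesssim\sqrt{(d+\log n)/n}$ with the uniform moment bound $\max_{i\le n}\max_j|a_j(X_i)|\lesssim K_x\sqrt{\log(dn)}$ to obtain a contribution of order $K_x^{O(1)}\sqrt{(d+\log n)/n}$. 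Summing the two parts gives the displayed bound on $\Delta_n$.

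The main obstacle is precisely this last step: obtaining the plug-in bound (and the uniform lower bound on $\widehat{\zeta}_{jk}$) over \emph{all} $\binom{d}{2}$ index pairs while losing only a $\sqrt{(d+\log n)/n}$ factor --- a crude union bound over pairs would cost extra powers of $d$, so one must exploit that all the $\widehat{a}_j$, $\widehat{\theta}_{jk}$ and $\widehat{\psi}_{jk}$ are smooth functionals of the \emph{single} random matrix $\widehat{\Sigma}_n$ and the \emph{single} random mean $\overline{X}_n$, whose fluctuations are already controlled in operator and Mahalanobis norm. Fortunately, the required perturbation identities and concentration estimates are exactly those developed in the proofs of Theorems~\ref{thm:linear-expansion-partial-corr} and~\ref{thm:Berry-Esseen-bound-partial-corr}, so this step amounts to re-assembling existing bounds rather than developing new machinery.
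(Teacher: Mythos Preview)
Your proposal is correct and follows essentially the same route as the paper: a DKW/Massart step for (I), the Gaussian comparison lemma of \cite{Cher13} for (II), and then a bound on the maximal entrywise discrepancy $\Delta_n$ between the empirical and population correlation matrices of the $\psi_{jk}$'s, which is split into a sampling part (handled by sub-Weibull concentration) and a plug-in part (controlled uniformly through $\mathcal{D}_n^{\Sigma}$ and $\|\overline{X}_n-\mu_X\|_{\Sigma^{-1}}$).

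Two small differences worth noting. First, for the plug-in part the paper works with empirical $L^2$/$L^4$ norms and Cauchy--Schwarz (Lemmas~\ref{lem:psihat-minus-psi-part1}--\ref{lem:rate-of-convergence-psihat-minus-psi}) rather than the pointwise bound $\max_i\max_j|a_j(X_i)|\lesssim K_x\sqrt{\log(dn)}$ you sketch; the $L^4$ route avoids spurious $\log$ factors but both lead to the stated rate under $d\le\sqrt{n}$. Second, your claimed $\Delta_n\lesssim K_x^{2}\sqrt{(d+\log n)/n}$ undercounts the $K_x$ exponent: the paper obtains $\Delta_0\lesssim K_x^{5}\sqrt{(d+\log n)/n}+K_x^{6}(d+\log n)/n$, and it is the $1/3$-power of this that produces the $K_x^{2}$ in the final bound. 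This is only a constant-tracking slip, not a gap in the argument.
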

In Theorem~\ref{eq:multplier-bootstrap-consistency-partial-corr}, we make all the assumptions used in Theorem~\ref{thm:Berry-Esseen-bound-partial-corr}, in particular, we also assume $d \le \sqrt{n}$. Once again the rate of convergence can be improved to $n^{-1/2}$ if $\lambda_{\star}$ (defined in Remark~\ref{rem:fast-rate-partial-correlation}) is bounded away from zero; see the proof of Theorem~\ref{eq:multplier-bootstrap-consistency-partial-corr} for details.

\section{Conclusion}
\label{section::conclusion}

We have provided explicit Berry-Esseen bounds
for mis-specified linear models. The bounds are
derived based on deterministic inequalities that do not
require any specific independence or dependence assumptions
on the observations. 
Explicit requirements on the growth of dimension $d$ 
in terms of the sample size $n$ are given for an asymptotic
normal approximation when the observations are independent
and identically distributed. 
The Berry--Esseen bounds as well as the bootstrap consistency guarantees here allow for construction of valid confidence sets
for the projection parameter provided that $d = o(\sqrt{n})$. 
Using the methods in
\cite{kuchibhotla2018valid}, confidence sets for the projection parameters can be constructed even for larger $d$ .
However, these sets are not rectangles and the projected
confidence intervals for individual projection parameters 
are much wider than those obtained here. The confidence 
intervals we obtained here have width of order $O(1/\sqrt{n})$,
whenever $d = o(\sqrt{n})$.

All the results are derived without any structural or sparsity assumptions on the projection parameter $\beta$ (and hence an \emph{assumption-lean} setting), unlike much of the recent literature on high-dimensional linear regression. Because we consider the ordinary least squares as our estimator, imposing any sparsity assumption on $\beta$ will not impact the final results; in particular, the estimator is still asympotically normal when centered at its target. If one uses different estimators designed to produce sparse estimates, then these estimators cannot be uniformly $\sqrt{n}$-consistent; see ~\cite{potscher2009confidence}. Further, if one applies debiasing~\citep{javanmard2014confidence,vandegeer2014asymptotically,zhang2014confidence} on the sparse estimator, then such an estimator has asymptotically the same behavior as the OLS estimator because the OLS estimator is semiparametrically efficient for the projection parameter $\beta$~\cite[Example 5]{Levit76}. 

In the following, we describe two interesting future directions.
Our confidence intervals have width of order $n^{-1/2}$ whenever $d = o(\sqrt{n})$. We believe this requirement on the dimension is optimal in order to obtain $n^{-1/2}$ width intervals. This conjecture is obtained from the results of~\cite{cai2017confidence}; the authors prove that the minimax width of a confidence intervals for individual coordinates in a $k$-sparse linear regression is $n^{-1/2} + k(\log d)/n$ whenever $k = O(\min\{d^{\gamma}, n/\log d\})$ (for $\gamma < 1/2$). We believe the correct formulation of the minimax width is $n^{-1/2} + k\log(ed/k)/n$ for all $1\le k\le d$, in which case taking $k = d$ yields the minimax width $n^{-1/2} + k/n$. This rate becomes $n^{-1/2}$ only when $d = O(\sqrt{n})$. This raises several interesting questions: ``What is the analogue of our results when $d \gg \sqrt{n}$? What kind of asymptotic distribution can one expect? What is a confidence set that works simultaneously for all $d \le n$? Does such a set still center at $\widehat{\beta}$?'' 
Secondly, it would be interesting to develop similar bounds for
other mis-specified parametric models such as a generalized linear
models (GLMs). The deterministic inequalities of~\cite{2018arXiv180905172K}
imply results similar to Theorem~\ref{thm:Basic-deter-ineq} and hence
a parallel set of results for GLMs could be obtained. Of course, it involves non-trivial calculations to derive sandwich consistency which is
relatively easy for linear models because the objective function is quadratic.  

\newpage

\begin{appendices}

\begin{center}
{\Large {\bf Appendix}}
\end{center}

\section{Proofs of the Results from Section~\ref{section::determiniswtic}}
\label{appendix:deterministic}

\begin{proof}[Proof of Theorem \ref{thm:Basic-deter-ineq}]
By optimality,  $\widehat{\beta}$, solves the normal equations $\widehat{\Sigma}_n\widehat{\beta} = \widehat{\Gamma}_n.$
Subtracting $\widehat{\Sigma}_n\beta\in\mathbb{R}^d$ from both sides, we get
$\widehat{\Sigma}_n(\widehat{\beta} - \beta) = \widehat{\Gamma}_n - \widehat{\Sigma}_n\beta,$
which is equivalent to
\[\textstyle
(\Sigma^{-1/2}_n\widehat{\Sigma}_n\Sigma_n^{-1/2})\Sigma^{1/2}_n(\widehat{\beta} - \beta) ~=~ \Sigma^{-1/2}_n(\widehat{\Gamma}_n - \widehat{\Sigma}_n\beta),
\]
since $\Sigma_n$ is invertible.
Adding and subtracting $\Sigma^{-1/2}_n (\widehat{\beta}  - \beta)$ on both sides further yields the identity
\[
\Sigma ^{1/2}_n\left[\widehat{\beta}  - \beta  - \Sigma ^{-1}_n(\widehat{\Gamma}_n  - \widehat{\Sigma}_n \beta )\right] ~=~ (I_d - \Sigma_n ^{-1/2}\widehat{\Sigma}_n \Sigma_n ^{-1/2})\Sigma_n ^{1/2}(\widehat{\beta}  - \beta ).
\]
Taking the Euclidean norm, we have that
\begin{equation}\label{eq:influence-error-bound}
\begin{split}
\|\widehat{\beta} - \beta - \Sigma_n^{-1}(\widehat{\Gamma}_n - \widehat{\Sigma}_n\beta)\|_{\Sigma_n} ~&=~ \|(I_d - \Sigma_n ^{-1/2}\widehat{\Sigma}_n \Sigma_n ^{-1/2})\Sigma_n ^{1/2}(\widehat{\beta}  - \beta )\|\\
~&\le~ \|I_d - \Sigma_n ^{-1/2}\widehat{\Sigma}_n \Sigma_n ^{-1/2}\|_{\mathrm{op}}\|\widehat{\beta}  - \beta\|_{\Sigma_n}\\
~&=~ \mathcal{D}_n^{\Sigma}\|\widehat{\beta} - \beta\|_{\Sigma_n}.
\end{split}
\end{equation}
To obtain a bound in the $\| \cdot \|_{\Sigma_n V_n^{-1}\Sigma_n}$ norm instead of the $\| \cdot \|_{\Sigma_n}$ norm, note that, for any $\theta\in\mathbb{R}^d$,
\[
\|\theta\|_{\Sigma_nV_n^{-1}\Sigma_n} = \|V_n^{-1/2}\Sigma_n\theta\| ~\le~ \|V_n^{-1/2}\Sigma_n^{1/2}\|_{\mathrm{op}}\|\theta\|_{\Sigma_n}.
\]
After substituting these inequalities in~\eqref{eq:influence-error-bound}
we arrive at the bound
\[
\|\widehat{\beta} - \beta - \Sigma_n^{-1}(\widehat{\Gamma}_n - \widehat{\Sigma}_n\beta)\|_{\Sigma_n V_n^{-1}\Sigma_n} ~\le~ 
\|V_n^{-1/2}\Sigma_n^{1/2}\|_{\mathrm{op}}\mathcal{D}_n^{\Sigma}\|\widehat{\beta} - \beta\|_{\Sigma_n}.
\]
This completes the proof.
\end{proof}

\begin{proof}[Proof of the Corollary \ref{cor:Max-Statistic-Correct-Scaling}]
Observe that, for any $x\in\mathbb{R}^{d}$ and any invertible matrix $A$,
\begin{align}\label{eq:Scaled-Euclidean-Maximum-Comparison}
\begin{split}
\|x\|_A = \|A^{1/2}x\| &= \max_{\theta\in\mathbb{R}^{d}}\frac{\theta^{\top}x}{\sqrt{\theta^{\top}A^{-1}\theta}}\\ &\geq \max_{\theta \in \{ \pm e_1,\ldots,\pm e_d \}} \frac{\theta^{\top}x}{\sqrt{\theta^{\top}A^{-1}\theta}} = \max_{1\le j\le d}\,\frac{|x_j|}{\sqrt{(A^{-1})_{jj}}}.
\end{split}
\end{align}
The result follows from Theorem~\ref{thm:Basic-deter-ineq}.
\end{proof}

\begin{proof}[Proof of Theorem \ref{thm:Berry-Esseen-OLS}]
On the event $\mathcal{E}_{\eta_n}$, we have that
\[
\|V_n^{-1/2}\Sigma_n^{1/2}\|_{\mathrm{op}}\mathcal{D}_n^{\Sigma}\|\widehat{\beta} - \beta\|_{\Sigma_n} \le \eta_n \quad\mbox{and}\quad \max_{1\le j\le d}\sqrt{\frac{(\Sigma_n^{-1}V_n\Sigma_n^{-1})_{jj}}{(\widehat{\Sigma_n^{-1}V_n\Sigma_n^{-1}})_{jj}}} \le 1 + \eta_n.
\]
Hence Corollary~\ref{cor:Max-Statistic-Correct-Scaling} yields
\[
\max_{1\le j\le d}\left|\frac{\widehat{\beta}_j - \beta_j - n^{-1}\sum_{i=1}^n \psi_{ij}}{\sqrt{(\widehat{\Sigma_n^{-1}V_n\Sigma_n^{-1}})_{jj}}}\right| \le \eta_n{(1 + \eta_n)}.
\]
Notice that the denominator involves an estimator of the ``asymptotic'' standard deviation. Therefore, on the event $\mathcal{E}_{\eta_n}$,
\begin{align*}
&\max_{1\le j\le d}\left|\frac{\widehat{\beta}_j - \beta_j}{\sqrt{(\widehat{\Sigma_n^{-1}V_n\Sigma_n^{-1}})_{jj}}} - \frac{n^{-1}\sum_{i=1}^n \psi_{ij}}{\sqrt{(\Sigma_n^{-1}V_n\Sigma_n^{-1})_{jj}}}\right|\\ 
&\qquad\le \eta_n(1 + \eta_n) + \max_{1\le j\le d}\left|\frac{n^{-1}\sum_{i=1}^n \psi_{ij}}{\sqrt{(\Sigma_n^{-1}V_n\Sigma_n^{-1})_{jj}}}\right|\times\max_{1\le j\le d}\left|\sqrt{\frac{(\Sigma_n^{-1}V_n\Sigma_n^{-1})_{jj}}{(\widehat{\Sigma_n^{-1}V_n\Sigma_n^{-1}})_{jj}}} - 1\right|\\
&\qquad\le \eta_n(1 + \eta_n) + \eta_n\max_{1\le j\le d}\left|\frac{n^{-1}\sum_{i=1}^n \psi_{ij}}{\sqrt{(\Sigma_n^{-1}V_n\Sigma_n^{-1})_{jj}}}\right|
\end{align*}
By standard Gaussian concentration and a union bound,
\[
\mathbb{P}\left(\max_{1\le j\le d}|G_j| \ge \sqrt{2\log(2 n  )}\right) \le \frac{d}{n}.
\]
Also,  the definition of $\Delta_n$ implies that
\begin{align*}
&\mathbb{P}\left(\max_{1\le j\le d}\left|\frac{n^{-1}\sum_{i=1}^n \psi_{ij}}{\sqrt{(\Sigma_n^{-1}V_n\Sigma_n^{-1})_{jj}}}\right| \ge \sqrt{2\log(2n)}\right)\\ 
&\quad\le \mathbb{P}\left(\max_{1\le j\le d}|G_j| \ge \sqrt{2\log(2n)}\right) + \Delta_n \le \frac{d}{n} + \Delta_n.
\end{align*}
Combining  these inequalities we obtain  that that there exists an event of probability at least $1 - \Delta_n - d/n - \mathbb{P}(\mathcal{E}_{\eta_n}^c)$ such that, on that event, it holds that 
\begin{align*}
&\max_{1\le j\le d}\left|\frac{\widehat{\beta}_j - \beta_j}{\sqrt{(\widehat{\Sigma_n^{-1}V_n\Sigma_n^{-1}})_{jj}}} - \frac{n^{-1}\sum_{i=1}^n \psi_{ij}}{\sqrt{(\Sigma_n^{-1}V_n\Sigma_n^{-1})_{jj}}}\right|\\ 
&\qquad \leq  \eta_n\times \left[ 1 +  \eta_n + \sqrt{2\log(2n)} \right] = \eta_n C_n(\eta_n),
\end{align*}
by the definition of $C(\eta_n)$.
 Hence for any $t \ge 0$ and $\eta_n > 0$, we have that
\begin{align*}
&\mathbb{P}\left(\max_{1\le j\le d}\left|\frac{\widehat{\beta}_j - \beta_j}{\sqrt{(\widehat{\Sigma_n^{-1}V_n\Sigma_n^{-1}})_{jj}}}\right| \le t\right)\\ 
&\quad\le \mathbb{P}\left(\max_{1\le j\le d}\left|\frac{n^{-1}\sum_{i=1}^n \psi_{ij}}{\sqrt{(\Sigma_n^{-1}V_n\Sigma_n^{-1})_{jj}}}\right| \le t + C_n(\eta_n)\eta_n\right)
+ \mathbb{P}(\mathcal{E}_{\eta_n}^c) + d/n + \Delta_n\\
&\mathbb{P}\left(\max_{1\le j\le d}\left|\frac{\widehat{\beta}_j - \beta_j}{\sqrt{(\widehat{\Sigma_n^{-1}V_n\Sigma_n^{-1}})_{jj}}}\right| \le t\right)\\ &\quad\ge \mathbb{P}\left(\max_{1\le j\le d}\left|\frac{n^{-1}\sum_{i=1}^n \psi_{ij}}{\sqrt{(\Sigma_n^{-1}V_n\Sigma_n^{-1})_{jj}}}\right| \le t - C_n(\eta_n)\eta_n\right)
- \mathbb{P}(\mathcal{E}_{\eta_n}^c) - d/n - \Delta_n.
\end{align*}
Next, from the definition of $\Delta_n$ and $\Phi_{AC}$, we finally obtain that
\begin{align*}
&\mathbb{P}\left(\max_{1\le j\le d}\left|\frac{\widehat{\beta}_j - \beta_j}{\sqrt{(\widehat{\Sigma_n^{-1}V_n\Sigma_n^{-1}})_{jj}}}\right| \le t\right)\\ &\quad\le \mathbb{P}\left(\max_{1\le j\le d}|G_j| \le t + C_n(\eta_n)\eta_n\right) + 2\Delta_n + \mathbb{P}(\mathcal{E}_{\eta_n}^c) + \frac{d}{n}\\
&\quad\le \mathbb{P}\left(\max_{1\le j\le d}|G_j| \le t\right) + 2\Delta_n + \mathbb{P}(\mathcal{E}_{\eta_n}^c) + C_n(\eta_n)\eta_n \Phi_{AC} + \frac{d}{n}\\
&\mathbb{P}\left(\max_{1\le j\le d}\left|\frac{\widehat{\beta}_j - \beta_j}{\sqrt{(\widehat{\Sigma_n^{-1}V_n\Sigma_n^{-1}})_{jj}}}\right| \le t\right)\\ &\quad\ge \mathbb{P}\left(\max_{1\le j\le d}|G_j| \le t - C_n(\eta_n)\eta_n\right) - 2\Delta_n - \mathbb{P}(\mathcal{E}_{\eta_n}^c) - \frac{d}{n}\\
&\quad\ge \mathbb{P}\left(\max_{1\le j\le d}|G_j| \le t\right) - 2\Delta_n - \mathbb{P}(\mathcal{E}_{\eta_n}^c) - C_n(\eta_n)\eta_n \Phi_{AC} - \frac{d}{n},
\end{align*}
as claimed.
\end{proof}

\section{Proof of the Main Results from Sections~\ref{section::explicit} and~\ref{sec::confidence-sets-OLS} (Projection Parameters)}
\label{appendix:main.ols}

\begin{proof}[Proof of Proposition~\ref{prop:mean-Gaussian-approximation-explicit}]
Theorem 1 of~\cite{kuchibhotla2020high} implies that
\begin{equation}\label{eq:Delta-n-first-bound}
\Delta_n \le C\frac{(\log n)^{5/2}}{\sqrt{n}\lambda_{\circ}}\log(\lambda_{\circ}^3\sqrt{n})\mathbb{E}\left[\max_{1\le j\le d}\,\frac{|\psi_{ij}|^3}{(\Sigma^{-1}V\Sigma^{-1})^{3/2}}\right],
\end{equation}
where $C$ is a universal constant.
We now bound the expectation in~\eqref{eq:Delta-n-first-bound}. Recall that $\psi_i = \Sigma^{-1}X_i(Y_i - X_i^{\top}\beta).$ 
\begin{align*}
\mathbb{E}\left[\max_{1\le j\le d}\,\frac{|\psi_{ij}|^3}{(\Sigma^{-1}V\Sigma^{-1})_{jj}^{3/2}}\right] &= \mathbb{E}\left[\max_{1\le j\le d}\left(\frac{|e_j^{\top}\Sigma^{-1}X_i|}{\sqrt{e_j^{\top}\Sigma^{-1}V\Sigma^{-1}e_j}}\right)^{3}|Y_i - X_i^{\top}\beta|^3\right]\\
&\le \frac{1}{\underline{\lambda}^{3/2}}\mathbb{E}\left[\max_{1\le  j\le d}\left|a_j^{\top}\Sigma^{-1/2}X_i\right|^3|Y_i - X_i^{\top}\beta|^3\right],
\end{align*}
where $a_j = \Sigma^{-1/2}e_j/(e_j^{\top}\Sigma^{-1}e_j)^{1/2}\in S^{d-1}$; the last inequality follows from assumption~\ref{eq:bounded-asymptotic-variance}. H{\"o}lder's inequality implies that whenever $1/q_x + 1/q \le 1/3$, we obtain
\begin{align*}
&\mathbb{E}\left[\max_{1\le  j\le d}\left|a_j^{\top}\Sigma^{-1/2}X_i\right|^3|Y_i - X_i^{\top}\beta|^3\right]\\ 
&\le \left(\mathbb{E}\left[\max_{1\le j\le d}\left|a_j^{\top}\Sigma^{-1/2}X_i\right|^{q_x}\right]\right)^{3/q_x}\left(\mathbb{E}[|Y_i - X_i^{\top}\beta|^{q}]\right)^{3/q}\\
&\le \left(\sum_{j=1}^d\mathbb{E}[|a_j^{\top}\Sigma^{-1/2}X_i|^{q_x}]\right)^{3/q_x}\left(\mathbb{E}[|Y_i - X_i^{\top}\beta|^{q}]\right)^{3/q}
\le d^{3/q_{x}}K_x^3K_y^3.
\end{align*}
Substituting this in~\eqref{eq:Delta-n-first-bound} yields
\[
\Delta_n \le C\frac{(\log n)^{5/2}}{\sqrt{n}\lambda_{\circ}}\log(\lambda_{\circ}^3\sqrt{n})\frac{d^{3/q_x}K_x^3K_y^3}{\underline{\lambda}^{3/2}}.
\]
This completes the proof.
\end{proof}

\begin{proof}[Proof of Theorem~\ref{thm:asymptotic-scaling-CLT-explicit}]
We use Proposition~\ref{prop:asymptotic-scaling-CLT}. The term $\Delta_n$ can be bounded as in Proposition~\ref{prop:mean-Gaussian-approximation-explicit}. So, we need to choose a suitable $\eta_n$ and bound the probability $\mathbb{P}(\mathcal{E}_{\beta,\eta_n}^{c})$, where the event $\mathcal{E}_{\beta,\eta_n}^{c}$ is given in \eqref{eq:beta-error-event}. Firstly, note that
\[
\|V_n^{-1/2}\Sigma_n^{1/2}\|_{op} = n^{1/2}\|V^{-1/2}\Sigma^{1/2}\|_{op} \le \overline{\lambda}^{1/2}{n^{1/2}},
\]
with the last inequality obtained from Assumption~\ref{eq:bounded-asymptotic-variance}. Hence, to bound $\mathbb{P}(\mathcal{E}_{\beta,\eta_n}^c)$ it suffices to find $\eta_n$ that upper bounds $n^{1/2}\mathcal{D}_n^{\Sigma}\|\widehat{\beta} - \beta\|_{\Sigma}$ with ``high'' probability.
By Propositions~\ref{prop:D_Sigma-concentration} and~\ref{prop:estimation-error-beta} (in Appendix~\ref{appendix:auxiliary.ols}), we get that with probability at least $1 - \delta - n^{-1/2}$,
\begin{equation}\label{eq:implication-proposition-D_sigma-OLS}
\begin{split}
    \mathcal{D}_n^{\Sigma} ~&\le~ C_{q_x}K_x^2\left\{\frac{d\delta^{-2/q_x}}{n^{1-2/q_x}} + \sqrt{\frac{d}{n}}\log^4\left(\frac{n}{d}\right)\right\},\\
    \|\widehat{\beta} - \beta\|_{\Sigma} ~&\le~ C_{q,q_x}\sqrt{\frac{d}{n}}\left[\sqrt{\frac{\log(en)}{\underline{\lambda}}} + {K_xK_y}\right].
\end{split}
\end{equation}
Above, the first inequality follows from  Propositions~\ref{prop:D_Sigma-concentration} by setting $q_x \ge 4$, which implies that $(d/n)^{1-2/q_x} \le \sqrt{d/n}$.
For the second inequality in~\eqref{eq:implication-proposition-D_sigma-OLS}, we use Proposition~\ref{prop:estimation-error-beta} with the choice of $\delta = n^{-qq_x/(2q+2q_x) + 1} \le n^{-1/2}$, because of the assumption that $1/q + 1/q_x \le 1/3$. 
Take $\delta = (d^{3/2}/n^{1-2/q_x})^{q_x/8}$ in~\eqref{eq:implication-proposition-D_sigma-OLS} and set
\[
\eta_n := C_{q,q_x}\widebar{\lambda}^{1/2}K_x^2\left[\sqrt{\frac{\log(en)}{\underline{\lambda}}} + {K_xK_y}\right]\left\{\frac{d}{\sqrt{n}}\log^4\left(\frac{n}{d}\right) + \left(\frac{d^{3/2}}{n^{1-2/q_x}}\right)^{3/4}\right\}.
\]
to ensure that $\mathbb{P}(\mathcal{E}_{\beta,\eta_n}^c) \le (d^{3/2}/n^{1-2/q_x})^{q_x/8} + n^{-1/2}$. Thus the result now follows from Proposition~\ref{prop:asymptotic-scaling-CLT}. If the stronger condition ~\ref{assump:finite-moment-independence-covariates}  holds true, then in the bound on $\mathcal{D}_n^{\Sigma}$, $d\delta^{-2/q_x}/n^{1-2/q_x}$ can be replaced by $d^{2/q_x}\delta^{-2/q_x}/n^{1-2/q_x}$. This follows from Proposition~\ref{prop:D_Sigma-concentration}. Then the same calculation detailed  above yields the result under~\ref{assump:finite-moment-independence-covariates}.
\end{proof}

\begin{proof}[Proof of Lemma~\ref{lem:sandwich-variance-error-control}]
We will use the deterministic inequality presented in Lemma~\ref{lem:std-err-bound-deterministic}, which requires controlling the quantities  $\mathcal{M}_4^{1/2}\|\widehat{\beta}-\beta\|_{\Sigma}$, $\mathcal{D}_n^{\Sigma}$, and $\mathbf{I}_1$ defined there. First, we control the term $\mathcal{M}_4^{1/2}\|\widehat{\beta}-\beta\|_{\Sigma}$.
As in the proof of Theorem~\ref{thm:asymptotic-scaling-CLT-explicit}, we have with probability at least $1 - 1/n^{q_{xy}/2 - 1}$ (which is at least $1 - 1/n$ if $q_{xy} \ge 4$),
\[
\|\widehat{\beta} - \beta\|_{\Sigma} \le C_{q,q_x}\sqrt{\frac{d}{n}}\left[\sqrt{\frac{\log(en)}{\underline{\lambda}}} + K_xK_y\right].
\]
Observe that
\begin{align*}
&\mathbb{P}\left(\mathcal{M}_4^{1/2}\|\widehat{\beta} - \beta\|_{\Sigma} \ge t\right)\\ 
&\le \mathbb{P}\left(\mathcal{M}_4^{1/2}\|\widehat{\beta} - \beta\|_{\Sigma} \ge t, \|\widehat{\beta} - \beta\|_{\Sigma} \le C_{q,q_x}\sqrt{\frac{d}{n}}\left[\sqrt{\frac{\log(en)}{\underline{\lambda}}} + K_xK_y\right]\right) \\
&\quad+ \mathbb{P}\left(\|\widehat{\beta} - \beta\|_{\Sigma} > C_{q,q_x}\sqrt{\frac{d}{n}}\left[\sqrt{\frac{\log(en)}{\underline{\lambda}}} + K_xK_y\right]\right)\\
&\le \mathbb{P}\left(C_{q,q_x}\sqrt{\frac{d}{n}}\left[\sqrt{\frac{\log(en)}{\underline{\lambda}}} + K_xK_y\right]\mathcal{M}_4^{1/2} \ge t\right) + \frac{1}{n^{q_{xy}/2 - 1}}\\
&\le \mathbb{E}[\mathcal{M}_4]\frac{C_{q,q_x}^2}{t^2}\frac{d}{n}\left[\frac{\log(en)}{\underline{\lambda}} + K_x^2K_y^2\right] + \frac{1}{n^{q_{xy}/2 - 1}}.
\end{align*}
The last inequality follows from Markov's inequality.
Taking $t = (d\mathbb{E}[\mathcal{M}_4]/n)^{1/3}$ and using Lemma~\ref{lem:M-4-bound} to upper bound   $\mathbb{E}[\mathcal{M}_4]$, we conclude that
\begin{align}
&\mathbb{P}\left(\mathcal{M}_4^{1/2}\|\widehat{\beta} - \beta\|_{\Sigma} \ge C_{q_x}\frac{K_x^{4/3}}{\underline{\lambda}^{1/3}}\left[\frac{d^{1/3}}{n^{1/3}} +\frac{d(\log n)^{1/3}}{n^{2/3 - 4/(3q_x)}} +\frac{d^{2/3}(\log n)^{1/6}}{n^{1/2 - 2/(3q_x)}}\right]\right)   \nonumber \\
&\le C_{q,q_x}\left[\frac{d^{1/3}\log n}{n^{1/3}} +\frac{d(\log n)^{4/3}}{n^{2/3 - 4/(3q_x)}} +\frac{d^{2/3}(\log n)^{7/6}}{n^{1/2 - 2/(3q_x)}}\right]\left(\frac{K_x^{4/3}}{\underline{\lambda}^{4/3}} + \frac{K_x^{10/3}K_y^2}{\underline{\lambda}^{1/3}}\right) \nonumber \\ 
&\quad+ \frac{1}{n^{q_{xy}/2 - 1}}.\label{eq:M-4-beta-error}
\end{align}
The remaining two terms $\mathcal{D}_n^{\Sigma}$ and $\mathbf{I}_1$ from Lemma~\ref{lem:std-err-bound-deterministic} are bounded  in Proposition~\ref{prop:D_Sigma-concentration} and Lemma~\ref{lem:I-1-ideal-Sandwich-error}, respectively. Those bounds, combined with the above inequality~\eqref{eq:M-4-beta-error} for $\mathcal{M}_4^{1/2}\|\widehat{\beta}-\beta\|_{\Sigma}$, yield the result.
\end{proof}

\begin{proof}[Proof of Theorem~\ref{thm::berry-esseen}]
Because $\widehat{V}_n = n^{-1}\widehat{V}$,
\[
\frac{n^{1/2}(\widehat{\beta}_j - \beta_j)}{\sqrt{(\widehat{\Sigma}_n^{-1}\widehat{V}\widehat{\Sigma}_n^{-1})_{jj}}} = \frac{\widehat{\beta}_j - \beta_j}{\sqrt{(\widehat{\Sigma}_n^{-1}\widehat{V}_n\widehat{\Sigma}_n^{-1})_{jj}}}.
\]
Thus, Theorem~\ref{thm:Berry-Esseen-OLS} can be applied with  $\eta_n$ taken to be the maximum of $\eta_n$ from Theorem~\ref{thm:asymptotic-scaling-CLT-explicit} and the right hand side of the bound in Lemma~\ref{lem:sandwich-variance-error-control}.
\end{proof}

\begin{proof}[Proof of Theorem \ref{thm:multiplier-bootstrap-consistency}]
We use Corollary 1 of~\cite{massart1990tight} and Theorem 2.2 of~\cite{lopes2020central} to prove the result. 
Firstly, because $T_b, 1\le b\le B$, are independent and identically distributed random variables in $\mathbb{R}$ conditional on $\mathcal{D}_n$, Corollary 1 of~\cite{massart1990tight} concludes
\begin{equation}\label{eq:Average-to-conditional-probability}
\sup_{t\ge 0}\left|\frac{1}{B}\sum_{b=1}^B\mathbbm{1}\{T_b \le t\} - \mathbb{P}(T_b \le t\big|\mathcal{D}_n)\right| \le \sqrt{\frac{\log(2n)}{2B}},
\end{equation}
with probability at least $1 - 1/n$.

Secondly, because $T_b$ is the maximum absolute value of a Gaussian vector with unit variances conditional on $\mathcal{D}_n$, Theorem 2.2 of~\cite{lopes2020central} yields
\begin{equation}\label{eq:Gaussian-comparison-bound}
\sup_{t\ge 0}\,\left|\mathbb{P}(T_b \le t\big|\mathcal{D}_n) - \mathbb{P}\left(\max_{1\le j\le d}|G_j| \le t\right)\right| \le \frac{C\log(d)}{\lambda_{\circ}}\Delta_0\log(1/\Delta_0),
\end{equation}
where
\[
\Delta_0 ~:=~ \max_{1\le j < k\le d}\left|\mbox{corr}(\widehat{\Sigma}_n^{-1}\widehat{V}_n\widehat{\Sigma}_n^{-1})_{jk} - \mbox{corr}(\Sigma_n^{-1}V_n\Sigma_n^{-1})_{jk}\right|.
\]
For notational convenience, let
$A := \widehat{\Sigma}_n^{-1}\widehat{V}_n\widehat{\Sigma}_n^{-1},$ and $B := \Sigma_n^{-1}V_n\Sigma_n^{-1}.$
Also, set $b_j = B^{1/2}e_j, 1\le j\le d$ with $B^{1/2}$. 
Next, we claim that 
\[
\Delta_0 \le \|B^{-1/2}AB^{-1/2} - I_d\|_{\mathrm{op}}\left(2 + \|B^{-1/2}AB^{-1/2}\|_{\mathrm{op}}\right).
\]
Indeed,
\begin{align*}
\Delta_0 
&= \max_{1\le j < k\le d}\left|\frac{e_j^{\top}Ae_k}{\sqrt{(e_j^{\top}Ae_j)(e_k^{\top}Ae_k)}} - \frac{e_j^{\top}Be_k}{\sqrt{(e_j^{\top}Be_j)(e_k^{\top}Be_k)}}\right|\\
&= \max_{1\le j\le k \le d}\left|\frac{b_j^{\top}B^{-1/2}AB^{-1/2}b_k}{\sqrt{(b_j^{\top}B^{-1/2}AB^{-1/2}b_j)(b_k^{\top}B^{-1/2}AB^{-1/2}b_k)}} - \frac{b_j^{\top}b_k}{\|b_j\|_{I_d}\|b_k\|_{I_d}}\right|\\
&\le \max_{1\le j < k\le d}\left|\frac{b_j^{\top}B^{-1/2}AB^{-1/2}b_k}{\sqrt{(b_j^{\top}B^{-1/2}AB^{-1/2}b_j)(b_k^{\top}B^{-1/2}AB^{-1/2}b_k)}} - \frac{b_j^{\top}B^{-1/2}AB^{-1/2}b_k}{\sqrt{(b_j^{\top}b_j)(b_k^{\top}b_k)}}\right|\\
&\qquad+ \max_{1\le j < k\le d}\left|\frac{b_j^{\top}B^{-1/2}AB^{-1/2}b_k}{\sqrt{(b_j^{\top}b_j)(b_k^{\top}b_k)}} - \frac{b_j^{\top}b_k}{\|b_j\|_{I_d}\|b_k\|_{I_d}}\right|\\
&\le \max_{1\le j < k\le d}\left|\frac{b_j^{\top}B^{-1/2}AB^{-1/2}b_k}{\sqrt{(b_j^{\top}B^{-1/2}AB^{-1/2}b_j)(b_k^{\top}B^{-1/2}AB^{-1/2}b_k)}}\right|\\
& \times\left|1 - \sqrt{\frac{(b_j^{\top}B^{-1/2}AB^{-1/2}b_j)(b_k^{\top}B^{-1/2}AB^{-1/2}b_k)}{b_j^{\top}b_jb_k^{\top}b_k}}\right|+ \|B^{-1/2}AB^{-1/2} - I_d\|_{\mathrm{op}}
\end{align*}
Using Cauchy-Schwarz inequality and the inequality $|1 - \sqrt{x}| \le |1 - x|$, valid for all $x \geq 0$, the previous expression is uper bounded by  the last expression is bounded by
\begin{align*}
&  \max_{1\le j \le k\le d}\left|1 - \frac{(b_j^{\top}B^{-1/2}AB^{-1/2}b_j)(b_k^{\top}B^{-1/2}AB^{-1/2}b_k)}{b_j^{\top}b_jb_k^{\top}b_k}\right| + \|B^{-1/2}AB^{-1/2} - I_d\|_{\mathrm{op}}\\
&\qquad~ \le \max_{1\le j\le d}\left|1 - \frac{b_j^{\top}B^{-1/2}AB^{-1/2}b_j}{b_j^{\top}b_j}\right| + \max_{1\le k\le d}\left|1 - \frac{b_k^{\top}B^{-1/2}AB^{-1/2}b_k}{b_k^{\top}b_k}\right|\|B^{-1/2}AB^{-1/2}\|_{\mathrm{op}}\\ &\qquad+ \|B^{-1/2}AB^{-1/2} - I_d\|_{\mathrm{op}}\\
&\qquad~ \le 2\|B^{-1/2}AB^{-1/2} - I_d\|_{\mathrm{op}} + \|B^{-1/2}AB^{-1/2} - I_d\|_{\mathrm{op}}\|B^{-1/2}AB^{-1/2}\|_{\mathrm{op}}\\
&\qquad~ = \|B^{-1/2}AB^{-1/2} - I_d\|_{\mathrm{op}}\left(2 + \|B^{-1/2}AB^{-1/2}\|_{\mathrm{op}}\right),
\end{align*}
where the first inequality follows form follows from the identity $(1 - ab) = (1 - b) - (a-1)b$ along with the triangle inequality.

Lemma~\ref{lem:sandwich-variance-error-control} now yields the rate of convergence of $\Delta_0$. 
Substituting this in~\eqref{eq:Gaussian-comparison-bound}
and combining with inequality~\eqref{eq:Average-to-conditional-probability}
concludes the proof.
\end{proof}


\end{appendices}

\bibliography{paper_CLT}
\bibliographystyle{apalike}
\newpage
\begin{appendices}
\begin{center}
{\Large {\bf Supplementary Material}}
\end{center}

\section{Proof of the Main Results from Section~\ref{section::partial} (Partial Correlations)}
\label{appendix:main.partial}

Recall the functions $\psi_{jk}(\cdot)$ and their estimators $\widehat{\psi}_{jk}(\cdot)$ given in~\eqref{eq:def-psi-jk} and~\eqref{eq:def-widehat-psi-jk}, respectively, and, similarly, the definitions of $a_j(\cdot)$ and $\widehat{a}_j(\cdot)$  in~\eqref{eq:ajx-function} and~\eqref{eq:ajhatx-function}, respectively.

\begin{proof}[Proof of Theorem~\ref{thm:linear-expansion-partial-corr}]
For notational convenience, let
$\widehat{\omega}_{jk} := e_j^{\top}\widehat{\Sigma}^{-1}e_k$ and $\omega_{jk} := e_j^{\top}\Sigma^{-1}e_k.$
Before bounding $\widehat{\theta}_{jk} - \theta_{jk}$, we note a few inequalities related to $\widehat{\omega}_{jk}$ and $\omega_{jk}$ that follow from~\eqref{eq:inv-covariance-error-bound} of Lemma~\ref{lemma:linear-expansion-inv-covariance}:
\begin{equation}\label{eq:inequalities-omegas}
\begin{split}
\max\left\{\left|\sqrt{\frac{\widehat{\omega}_{jj}}{\omega_{jj}}} - 1\right|, \left|\frac{\widehat{\omega}_{jj}}{\omega_{jj}} - 1\right|, \left|\frac{\widehat{\omega}_{jk} - \omega_{jk}}{\sqrt{\omega_{jj}\omega_{kk}}}\right|\right\} ~&\le~ \frac{\mathcal{D}_n^{\Sigma}}{1 - \mathcal{D}_n^{\Sigma}};\\
\max\left\{\left|\sqrt{\frac{\widehat{\omega}_{jj}\widehat{\omega}_{kk}}{\omega_{jj}\omega_{kk}}} - 1\right|, \left|\frac{\widehat{\omega}_{jj}\widehat{\omega}_{kk}}{\omega_{jj}\omega_{kk}} - 1\right|\right\} ~&\le~ \frac{2\mathcal{D}_n^{\Sigma}}{1 - \mathcal{D}_n^{\Sigma}} + \left(\frac{\mathcal{D}_n^{\Sigma}}{1 - \mathcal{D}_n^{\Sigma}}\right)^2;\quad\mbox{and}\\
\frac{1}{1 + \mathcal{D}_n^{\Sigma}} ~\le~ \frac{\widehat{\omega}_{jj}}{\omega_{jj}} ~&\le~ \frac{1}{1 - \mathcal{D}_n^{\Sigma}},\quad\mbox{for all}\quad 1\le j\le d.
\end{split}
\end{equation}
All these inequalities follow from the fact that
\begin{align*}
\|\Sigma^{1/2}(\widehat{\Sigma}^{-1} - \Sigma^{-1})\Sigma^{1/2}\|_{\mathrm{op}} &= \sup_{x, y\in\mathbb{R}^d}\,\frac{x^{\top}\Sigma^{1/2}(\widehat{\Sigma}^{-1} - \Sigma^{-1})\Sigma^{1/2}y}{\|x\|\|y\|}\\ 
&\ge \max_{j,k}\frac{|e_j^{\top}(\widehat{\Sigma}^{-1} - \Sigma^{-1})e_k|}{\sqrt{(e_j^{\top}\Sigma^{-1}e_j)(e_k^{\top}\Sigma^{-1}e_k)}}.
\end{align*}
Observe that
\begin{align*}
\widehat{\theta}_{jk} - \theta_{jk} &= -\frac{\widehat{\omega}_{jk}}{\sqrt{\widehat{\omega}_{jj}\widehat{\omega}_{kk}}} + \frac{\omega_{jk}}{\sqrt{\omega_{jj}\omega_{kk}}}\\
&= -\frac{\widehat{\omega}_{jk} - \omega_{jk}}{\sqrt{\widehat{\omega}_{jj}\widehat{\omega}_{kk}}} - \frac{\omega_{jk}}{\sqrt{\widehat{\omega}_{jj}\widehat{\omega}_{kk}}}\left[1 - \sqrt{\frac{\widehat{\omega}_{jj}\widehat{\omega}_{kk}}{\omega_{jj}\omega_{kk}}}\right].
\end{align*}
The equation $\sqrt{a} - 1 = (a-1)/2 - (\sqrt{a} - 1)^2/2$ for $a > 0$ yields
\begin{align*}
\widehat{\theta}_{jk} - \theta_{jk} &= -\frac{\widehat{\omega}_{jk} - \omega_{jk}}{\sqrt{\widehat{\omega}_{jj}\widehat{\omega}_{kk}}} - \frac{\omega_{jk}}{2\sqrt{\widehat{\omega}_{jj}\widehat{\omega}_{kk}}}\left[1 - \frac{\widehat{\omega}_{jj}\widehat{\omega}_{kk}}{\omega_{jj}\omega_{kk}}\right] - \frac{\omega_{jk}}{2\sqrt{\widehat{\omega}_{jj}\widehat{\omega}_{kk}}}\left(1 - \sqrt{\frac{\widehat{\omega}_{jj}\widehat{\omega}_{kk}}{\omega_{jj}\omega_{kk}}}\right)^2.
\end{align*}
Finally using $a'b' - 1 = (a' - 1) + (b' - 1) + (a' - 1)(b' - 1)$, we obtain
\begin{align*}
&\left|\widehat{\theta}_{jk} - \theta_{jk} + \frac{\widehat{\omega}_{jk} - \omega_{jk}}{\sqrt{\widehat{\omega}_{jj}\widehat{\omega}_{kk}}} - \frac{\omega_{jk}}{2\sqrt{\widehat{\omega}_{jj}\widehat{\omega}_{kk}}}\left[\frac{\widehat{\omega}_{jj}}{\omega_{jj}} + \frac{\widehat{\omega}_{kk}}{\omega_{kk}} - 2\right]\right|\\
&\qquad\le \frac{|\omega_{jk}|}{2\sqrt{\widehat{\omega}_{jj}\widehat{\omega}_{kk}}}\times\left|1 - \frac{\widehat{\omega}_{jj}}{\omega_{jj}}\right|\times\left|1 - \frac{\widehat{\omega}_{kk}}{\omega_{kk}}\right| ~+~ \frac{|\omega_{jk}|}{2\sqrt{\widehat{\omega}_{jj}\widehat{\omega}_{kk}}}\left|1 - \sqrt{\frac{\widehat{\omega}_{jj}\widehat{\omega}_{kk}}{\omega_{jj}\omega_{kk}}}\right|^2
\end{align*}
We now replace $\widehat{\omega}_{jj}, \widehat{\omega}_{kk}$ in the denominator of the left side to get
\begin{align*}
&\left|\widehat{\theta}_{jk} - \theta_{jk} + \frac{\widehat{\omega}_{jk} - \omega_{jk}}{\sqrt{{\omega}_{jj}{\omega}_{kk}}} - \frac{\omega_{jk}}{2\sqrt{{\omega}_{jj}{\omega}_{kk}}}\left[\frac{\widehat{\omega}_{jj}}{\omega_{jj}} + \frac{\widehat{\omega}_{kk}}{\omega_{kk}} - 2\right]\right|\\
&\qquad\le \frac{|\omega_{jk}|}{2\sqrt{\widehat{\omega}_{jj}\widehat{\omega}_{kk}}}\times\left|1 - \frac{\widehat{\omega}_{jj}}{\omega_{jj}}\right|\times\left|1 - \frac{\widehat{\omega}_{kk}}{\omega_{kk}}\right| ~+~ \frac{|\omega_{jk}|}{2\sqrt{\widehat{\omega}_{jj}\widehat{\omega}_{kk}}}\left|1 - \sqrt{\frac{\widehat{\omega}_{jj}\widehat{\omega}_{kk}}{\omega_{jj}\omega_{kk}}}\right|^2\\
&\qquad\qquad+ \frac{|\widehat{\omega}_{jk} - \omega_{jk}|}{\sqrt{\omega_{jj}\omega_{kk}}}\left|\sqrt{\frac{\omega_{jj}\omega_{kk}}{{\widehat{\omega}_{jj}\widehat{\omega}_{kk}}}} - 1\right| + \frac{|\omega_{jk}|}{2\sqrt{\omega_{jj}\omega_{kk}}}\left|\frac{\widehat{\omega}_{jj}}{\omega_{jj}} + \frac{\widehat{\omega}_{kk}}{\omega_{kk}} - 2\right|\times\left|\sqrt{\frac{\omega_{jj}\omega_{kk}}{{\widehat{\omega}_{jj}\widehat{\omega}_{kk}}}} -  1\right|. 
\end{align*}
To bound the right hand side we use inequalities~\eqref{eq:inequalities-omegas}. Note that
\[
\frac{|\omega_{jk}|}{\sqrt{\omega_{jj}\omega_{kk}}} \le 1,\quad\mbox{and}\quad \frac{|\omega_{jk}|}{\sqrt{\widehat{\omega}_{jj}\widehat{\omega}_{kk}}}  \leq \frac{|\omega_{jk}|}{\sqrt{\omega_{jj}\omega_{kk}}} (1 + \mathcal{D}_n^{\Sigma}) 
\leq 1 + \mathcal{D}_n^{\Sigma}.
\]
Further, second and third inequalites of~\eqref{eq:inequalities-omegas} yield
\begin{align*}
\left|\sqrt{\frac{\omega_{jj}\omega_{kk}}{\widehat{\omega}_{jj}\widehat{\omega}_{kk}}} - 1\right| &= \left|\sqrt{\frac{\widehat{\omega}_{jj}\widehat{\omega}_{kk}}{\omega_{jj}\omega_{kk}}} - 1\right|\sqrt{\frac{\omega_{jj}\omega_{kk}}{\widehat{\omega}_{jj}\widehat{\omega}_{kk}}}\\
&\le \left[\frac{2\mathcal{D}_n^{\Sigma}}{1 - \mathcal{D}_n^{\Sigma}} + \left(\frac{\mathcal{D}_n^{\Sigma}}{1 - \mathcal{D}_n^{\Sigma}}\right)^2\right](1 + \mathcal{D}_n^{\Sigma}) \le 9\mathcal{D}_n^{\Sigma},
\end{align*}
under the assumption $\mathcal{D}_n^{\Sigma} \le 1/2$.

Combining these inequalities, we conclude
\begin{equation}\label{eq:prelim-partial-corr-expansion}
\left|\widehat{\theta}_{jk} - \theta_{jk} + \frac{\widehat{\omega}_{jk} - \omega_{jk}}{\sqrt{{\omega}_{jj}{\omega}_{kk}}} - \frac{\omega_{jk}}{2\sqrt{{\omega}_{jj}{\omega}_{kk}}}\left[\frac{\widehat{\omega}_{jj}}{\omega_{jj}} + \frac{\widehat{\omega}_{kk}}{\omega_{kk}} - 2\right]\right| \le C(\mathcal{D}_n^{\Sigma})^2,
\end{equation}
for a universal constant $C \in (0, \infty)$ and for all $1\le j, k\le d$.
Finally~\eqref{eq:final-linear-expansion-inv-covariance} of Lemma~\ref{lemma:linear-expansion-inv-covariance} implies
\[
\max_{1\le j\le k \le d}\left|\frac{\widehat{\omega}_{jk} - \omega_{jk} - e_j^{\top}\Sigma^{-1}(\widetilde{\Sigma} - \Sigma)\Sigma^{-1}e_k}{\sqrt{\omega_{jj}\omega_{kk}}}\right| \le \|\overline{X}_n - \mu_X\|_{\Sigma^{-1}}^2 + \frac{(\mathcal{D}_n^{\Sigma})^2}{1 - \mathcal{D}_n^{\Sigma}}.
\]
Combining this inequality with~\eqref{eq:prelim-partial-corr-expansion} and using $\mathcal{D}_n^{\Sigma} \le 1/2$ concludes
\begin{align*}
&\left|\widehat{\theta}_{jk} - \theta_{jk} + \frac{e_j^{\top}\Sigma^{-1}(\widetilde{\Sigma} - \Sigma)\Sigma^{-1}e_k}{\sqrt{\omega_{jj}\omega_{kk}}} - \frac{\theta_{jk}}{2}\left[\frac{e_j^{\top}\Sigma^{-1}(\widetilde{\Sigma} - \Sigma)\Sigma^{-1}e_j}{\omega_{jj}} + \frac{e_k^{\top}\Sigma^{-1}(\widetilde{\Sigma} - \Sigma)\Sigma^{-1}e_k}{\omega_{kk}}\right]\right|\\
&\qquad\le C(\mathcal{D}_n^{\Sigma})^2 + \|\overline{X}_n - \mu_X\|_{\Sigma^{-1}}^2,
\end{align*}
for a universal constant $C \in (0, \infty)$. This concludes the proof.
\end{proof}

\begin{proof}[Proof of Theorem \ref{thm:Berry-Esseen-bound-partial-corr}]
We will prove the theorem when $C\eta_n \le 1/2$; otherwise the result is trivially true by increasing the constant $C$. For notational convenience, let $\zeta_{jk} := \mathbb{E}[\psi_{jk}^2(X)]$. Theorem~\ref{thm:linear-expansion-partial-corr} implies that
\begin{equation}\label{eq:proper-normalized-partial-correlation}
\max_{1\le j < k\le d}\left|\frac{\widehat{\theta}_{jk} - \theta_{jk}}{\widehat{\zeta}_{jk}} + \frac{1}{n\widehat{\zeta}_{jk}}\sum_{i=1}^n \psi_{jk}(X_i)\right| ~\le~ \frac{C(\mathcal{D}_n^{\Sigma})^2 + \|\overline{X}_n - \mu_X\|_{\Sigma^{-1}}^2}{\min_{1\le j < k\le d}\widehat{\zeta}_{jk}},
\end{equation}
whenever $\mathcal{D}_n^{\Sigma} \le 1/2$. Furthermore,
\begin{equation}\label{eq:average-variance-estimator}
\left|\frac{1}{n\widehat{\zeta}_{jk}}\sum_{i=1}^n \psi_{jk}(X_i) - \frac{1}{n\zeta_{jk}}\sum_{i=1}^n \psi_{jk}(X_i)\right| = \frac{|n^{-1}\sum_{i=1}^n \psi_{jk}(X_i)|}{\widehat{\zeta}_{jk}\zeta_{jk}}\times|\widehat{\zeta}_{jk} - \zeta_{jk}|.
\end{equation}
Define the random variable
\[
\widetilde{\zeta}_{jk}^2 ~:=~ \frac{1}{n}\sum_{i=1}^n \psi_{jk}^2(X_i),
\]
which may be regarded as a pseudo-estimator of sort, since $\mathbb{E}[\widetilde{\zeta}_{jk}^2] = \zeta^2_{jk}$. Of course $\widetilde{\zeta}_{jk}^2$ is not a computable  estimator of $\zeta^2_{jk}$ because it depends on unknown quantities, namely $\Sigma$ and $\mu_X$.

Equations ~\eqref{eq:proper-normalized-partial-correlation} and \eqref{eq:average-variance-estimator} together imply that
\begin{equation}\label{eq:combination-influence-fn-exp-partial-correlation}
\begin{split}
\max_{1\le j < k\le d}\left|\frac{\widehat{\theta}_{jk} - \theta_{jk}}{\widehat{\zeta}_{jk}} + \frac{1}{n\zeta_{jk}}\sum_{i=1}^n \psi_{jk}(X_i)\right| ~&\le~ \frac{C(\mathcal{D}^{\Sigma})^2 + \|\widebar{X}_n - \mu_X\|_{\Sigma^{-1}}^2}{\min_{1\le j < k\le d}\widehat{\zeta}_{jk}}\\
~&\qquad+~ \max_{1\le j < k\le d}\frac{|\widehat{\zeta}_{jk} - \zeta_{jk}|}{\widehat{\zeta}_{jk}\zeta_{jk}}\left|\frac{1}{n}\sum_{i=1}^n \psi_{jk}(X_i)\right|. 
\end{split}
\end{equation}

Clearly,
\begin{equation}\label{eq:bound-hat.sigma-sigma}
\widehat{\zeta}_{jk} = \zeta_{jk}\left(1 + \frac{\widehat{\zeta}_{jk}}{\zeta_{jk}} - 1\right) \ge \zeta_{jk}\left(1 - \left|\frac{\widehat{\zeta}_{jk}}{\zeta_{jk}} - 1\right|\right).
\end{equation}
Using the pseudo-estimator $\widetilde{\zeta}_{jk}^2$, we have with probability $1 - C/n$,
\begin{equation}\label{eq:sigma-hat-to-sigma-tilde}
\max_{1\le j < k\le d}|\widehat{\zeta}_{jk} - \widetilde{\zeta}_{jk}| \le CK_x^6\sqrt{\frac{d + \log n}{n}} + CK_x^5\frac{d + \log n}{n},
\end{equation}
Next, since
\[
\left|\widetilde{\zeta}_{jk} - \zeta_{jk}\right| = \frac{|\widetilde{\zeta}_{jk}^2 - \zeta_{jk}^2|}{|\widetilde{\zeta}_{jk} + \zeta_{jk}|} \le \frac{1}{\zeta_{jk}}\left|\frac{1}{n}\sum_{i=1}^n \psi_{jk}^2(X_i) - \mathbb{E}[\psi_{jk}^2(X_i)]\right|.
\]
we have that
\[
\max_{1\le j < k\le d}\left|\widetilde{\zeta}_{jk} - \zeta_{jk}\right| ~\le~ \frac{1}{\min_{1\le j < k\le d}\zeta_{jk}}\max_{1\le j < k\le d}\left|\frac{1}{n}\sum_{i=1}^n \psi_{jk}^2(X_i) - \mathbb{E}[\psi_{jk}^2(X_i)]\right|.
\]
Then, applying  Lemma~\ref{lemma:Thm3.1.KuchAbhi} with $\alpha = 1/2, q = d^2$ and $t = \log n$,  we obtain that, with probability at least $1 - 3/n$,
\begin{equation}\label{eq:sigma-tilde-to-sigma}
\max_{1\le j < k\le d}\left|\widetilde{\zeta}_{jk} - \zeta_{jk}\right| ~\le~ \frac{CK_x^4}{\zeta_{\min}}\left[\sqrt{\frac{\log(dn)}{n}} + \frac{\log^2(dn)}{n}\right].
\end{equation}

Combining inequalities~\eqref{eq:sigma-hat-to-sigma-tilde} and~\eqref{eq:sigma-tilde-to-sigma} we now conclude that, with probability at least $1 - (C+3)/n$,
\begin{align*}
\max_{1\le j < k\le d}\left|\frac{\widehat{\zeta}_{jk}}{\zeta_{jk}} - 1\right| ~&\le~ \frac{CK_x^6}{\zeta_{\min}}\sqrt{\frac{d + \log n}{n}} + \frac{CK_x^5}{\zeta_{\min}}\frac{d + \log n}{n}\\
~&\qquad+~ \frac{CK_x^4}{\zeta_{\min}^2}\left[\sqrt{\frac{\log(dn)}{n}} + \frac{\log^2(dn)}{n}\right].
\end{align*}
Because $K_x \ge 1$, the previous bound reduces to
\begin{equation}\label{eq:sigma-hat-to-sigma}
\max_{1\le j < k\le d}\left|\frac{\widehat{\zeta}_{jk}}{\zeta_{jk}} - 1\right|  \leq C\left(\frac{K_x^6}{\zeta_{\min}} + \frac{K_x^4}{\zeta_{\min}^2}\right)\left[\sqrt{\frac{d + \log n}{n}} + \frac{d + \log^2n}{n}\right]. 
\end{equation}
 Assuming $n$ large enough so that the quantity on the right hand side is bounded by $1/2$ and using the inequality \eqref{eq:bound-hat.sigma-sigma}, we get that, with probability at least $1 - (C + 3)/n$, $\widehat{\zeta}_{jk} \ge \zeta_{jk}/2$ for all $1\le j < k\le d$ and hence,
\begin{align*}
&\max_{1\le j < k\le d}\left|\frac{\widehat{\theta}_{jk} - \theta_{jk}}{\widehat{\zeta}_{jk}} + \frac{1}{n\zeta_{jk}}\sum_{i=1}^n \psi_{jk}(X_i)\right|\\ 
&\qquad\le~ \frac{2C(\mathcal{D}^{\Sigma})^2 + 2\|\widebar{X}_n - \mu_X\|_{\Sigma^{-1}}^2}{\zeta_{\min}}
+ 2\max_{1\le j < k\le d}\frac{|\widehat{\zeta}_{jk} - \zeta_{jk}|}{\zeta_{jk}^2}\left|\frac{1}{n}\sum_{i=1}^n \psi_{jk}(X_i)\right|. 
\end{align*}

We now proceed to derive a high probability bound for the last display. The term $\max_{1\le j < k\le d}{|\widehat{\zeta}_{jk} - \zeta_{jk}|}/{\zeta_{jk}^2}$ can be bounded as in equation~\eqref{eq:sigma-hat-to-sigma}, with probability at least  $1 - (C+3)/n$. Next, Lemma~\ref{lemma:Thm3.1.KuchAbhi} with $\alpha=1$ gives that, with probability at least $1 - 3/n$,
\begin{equation}\label{eq:bound-on-sum-psi}
\max_{1\le j < k\le d}\left|\frac{1}{n}\sum_{i=1}^n \psi_{jk}(X_i)\right| ~\le~ CK_x^2\sqrt{\frac{\log(dn)}{n}} + CK_x^2\frac{\log(dn)}{n}.
\end{equation}

To bound $(\mathcal{D}_n^{\Sigma})^2 + \|\overline{X}_n - \mu_X\|_{\Sigma^{-1}}^2$, we notice that, by Proposition~\ref{prop:bounding-D-sigma} 
\[
\mathcal{D}_n^{\Sigma} ~\le~ \|\Sigma^{-1/2}\widetilde{\Sigma}\Sigma^{-1/2} - I_d\|_{\mathrm{op}} + \|\overline{X}_n - \mu_X\|_{\Sigma^{-1}}^2.
\]
Next, Lemma~\ref{lem:concentration-of-covariance} yields that
\[
\mathbb{P}\left(\|\Sigma^{-1/2}\widetilde{\Sigma}\Sigma^{-1/2} - I_d\|_{\mathrm{op}} \ge CK_x^2\sqrt{\frac{d + \log(1/\delta)}{n}} + CK_x^2\frac{d + \log(1/\delta)}{n}\right) \le \delta.
\]
and the sub-Gaussianity assumption further implies that
\[
\mathbb{P}\left(\|\overline{X}_n - \mu_X\|_{\Sigma^{-1}} \ge CK_x\sqrt{\frac{d + \log(1/\delta)}{n}}\right) \le \delta.
\]
Therefore, 
\begin{equation}\label{eq:linear-rep-error-partial-corr}
\mathbb{P}\left(n^{1/2}(\mathcal{D}_n^{\Sigma})^2 + n^{1/2}\|\overline{X}_n - \mu_X\|_{\Sigma^{-1}}^2 \ge C(K_x^2 + K_x^4)\frac{d + \log(n)}{\sqrt{n}}\right) \le \frac{1}{n}.
\end{equation}

Combining the bounds \eqref{eq:sigma-hat-to-sigma}, \eqref{eq:bound-on-sum-psi} and \eqref{eq:linear-rep-error-partial-corr}, we  conclude that, with probability at least $1 - C/n$,
\begin{align*}
&\max_{1\le j < k\le d}\left|\frac{n^{1/2}(\widehat{\theta}_{jk} - \theta_{jk})}{\widehat{\zeta}_{jk}} + \frac{1}{\sqrt{n}\zeta_{jk}}\sum_{i=1}^n \psi_{jk}(X_i)\right|\\ 
~&\le~ \frac{CK_x^4}{\zeta_{\min}}\frac{d + \log n}{\sqrt{n}}\\
~&+~ C\left(\frac{K_x^8\sqrt{\log(dn)}}{\zeta_{\min}^2} + \frac{K_x^6\sqrt{\log(dn)}}{\zeta_{\min}^3}\right)\left[\sqrt{\frac{d + \log n}{n}} + \frac{d + \log^2n}{n}\right]\left(1 + \sqrt{\frac{\log(dn)}{n}}\right),
\end{align*}
whenever the right hand side is smaller than $1/2$.
Because $d \le n$, ${d/n} \le \sqrt{d/n}$ and 
\[
\sqrt{\frac{\log n}{n}} + \frac{\log^2n}{n} = \sqrt{\frac{\log n}{n}}\left(1 + \frac{\log^{3/2}n}{n^{1/2}}\right) \le C\sqrt{\frac{\log n}{n}},
\]
we have that
\[
\sqrt{\frac{d + \log n}{n}} + \frac{d + \log^2n}{n} \le C\sqrt{\frac{d + \log n}{n}}.
\] 
Thus we have shown that, with probability at least $1 - C/n$,
\begin{align*}
&\max_{1\le j < k\le d}\left|\frac{n^{1/2}(\widehat{\theta}_{jk} - \theta_{jk})}{\widehat{\zeta}_{jk}} + \frac{1}{\sqrt{n}\zeta_{jk}}\sum_{i=1}^n \psi_{jk}(X_i)\right|\\ 
~&\le~ \frac{CK_x^4}{\zeta_{\min}}\frac{d + \log n}{\sqrt{n}} ~+~ C\left(\frac{K_x^8}{\zeta_{\min}^2} + \frac{K_x^6}{\zeta_{\min}^3}\right)\sqrt{\frac{(d + \log n)\log(dn)}{n}}\\
~& = \eta_n.
\end{align*}

By the same arguments used in the proof of Theorem~\ref{thm:Berry-Esseen-OLS}, 
\begin{equation}\label{eq:penultimate-partial-correlation11}
\begin{split}
&\sup_{t\ge0}\left|\mathbb{P}\left(\max_{1\le j < k\le d}\left|\frac{n^{1/2}(\widehat{\theta}_{jk} - \theta_{jk})}{\widehat{\zeta}_{jk}}\right| \le t\right) - \mathbb{P}\left(\max_{1\le j < k\le d}|G_{jk}| \le t\right)\right|\\ 
&\qquad\le \mathbb{P}\left(t - \eta_n \le \max_{1\le j \le k\le d}|G_{jk}| \le t + \eta_n\right) + \frac{C}{n}\\
&\qquad\qquad+ \sup_{t\ge0}\left|\mathbb{P}\left(\max_{1\le j\le k \le d}|G_{jk}| \le t\right) - \mathbb{P}\left(\max_{1\le j < k\le d}\left|\frac{1}{\sqrt{n}}\sum_{i=1}^n \frac{\psi_{jk}(X_i)}{\zeta_{jk}}\right| \le t\right)\right|.
\end{split}
\end{equation}

By Nazarov's inequality  \citep[see Lemma A.1 in][]{chernozhukov2017detailed},
\[
\mathbb{P}\left(t - \eta_n \le \max_{1\le j \le k\le d}|G_{jk}| \le t + \eta_n\right) \le C  \eta_n\sqrt{\log d},
\]
for a universal constant $C>0$. 
To bound the last term of~\eqref{eq:penultimate-partial-correlation11}, we use Theorem 2.1(a) of~\cite{koike2019notes}. Firstly, note that $a_j(X)$ (in~\eqref{eq:ajx-function}) is sub-Gaussian by assumption and hence $\psi_{jk}(X)$ is sub-exponential satisfying $\|\psi_{jk}(X)\|_{\psi_1} \le CK_x^2$ for some universal constant $C$; this also implies that $B_n = CK_x^2$, for a constant depending on the minimum of $\zeta_{j,k}, 1\le j< k \le d$, in Theorem 2.1(a) of~\cite{koike2019notes}. Thus, Theorem 2.1(a) of~\cite{koike2019notes} yields
\begin{equation}
\begin{split}
&\sup_{t\ge0}\left|\mathbb{P}\left(\max_{1\le j\le k \le d}|G_{jk}| \le t\right) - \mathbb{P}\left(\max_{1\le j < k\le d}\left|\frac{1}{\sqrt{n}}\sum_{i=1}^n \frac{\psi_{jk}(X_i)}{\zeta_{jk}}\right| \le t\right)\right|\\ 
&\quad\le C(\log d)^{2/3}\left[\left(\frac{K_x^4\log^3d}{n}\right)^{1/6} + \left(\frac{K_x^3\log^2d\log^2n}{n}\right)^{1/3}\right]\\
&\quad\le CK_x^{4/3}\left[\frac{(\log d)^{5/6}}{n^{1/6}} + \frac{\log d(\log n)^{2/3}}{n^{1/3}}\right] \le CK_x^{4/3}\frac{(\log(d\vee n))^{5/6}}{n^{1/6}}.
\end{split}
\end{equation}
Substituting this bound in~\eqref{eq:penultimate-partial-correlation11} completes the proof. 

To prove the result when the minimum eigenvalue of the correlation matrix of $(\psi_{jk}(X_i)/\zeta_{jk}: 1\le j < k \le d)$ is bounded away from zero, we apply Corollary 2.1 of~\cite{chernozhukov2020nearly} with $q = 4$; see the last case of the corollary. Under the assumptions of the theorem, assumption (E.3) of~\cite{chernozhukov2020nearly} holds true with $q = 4$ and $B_n = C\log(d)$ for some constant $C$ depending on the minimum of $\zeta_{jk}, 1\le j < k \le d$. Hence, Corollary 2.1 of~\cite{chernozhukov2020nearly} implies that
\begin{align*}
    &\sup_{t\ge0}\left|\mathbb{P}\left(\max_{1\le j\le k \le d}|G_{jk}| \le t\right) - \mathbb{P}\left(\max_{1\le j < k\le d}\left|\frac{1}{\sqrt{n}}\sum_{i=1}^n \frac{\psi_{jk}(X_i)}{\zeta_{jk}}\right| \le t\right)\right|\\
    &\quad\le CK_x^4\frac{\log^4d\log n}{\sqrt{n}\lambda_{\star}^2},
\end{align*}
where $\lambda_{\star}^2$ is the smallest eigenvalue of the correlation matrix of the random vector $(\psi_{jk}(X_i)/\zeta_{jk}: 1\le j < k \le d)$. Substituting this bound in~\eqref{eq:penultimate-partial-correlation11} yields the $\sqrt{n}$ rate of convergence in the central limit theorem.
\end{proof}

\begin{proof}[Proof of Theorem~\ref{eq:multplier-bootstrap-consistency-partial-corr}]
By Corollary 1 of~\cite{massart1990tight}, we obtain
\[
\sup_{t\ge0}\left|\frac{1}{B}\sum_{i=1}^n \mathbbm{1}\{T_b \le t\} - \mathbb{P}\left(T_b \le t\big|X_1,\ldots,X_n\right)\right| \le \sqrt{\frac{\log(2n)}{2B}},
\]
with probability at least $1 - 1/n$. 

By Lemma 3.1 of~\cite{Cher13}, we obtain
\begin{equation}\label{eq:gaussian-comparison-partial-corr}
\begin{split}
&\sup_{t\ge0}\left|\mathbb{P}(T_b \le t|X_1,\ldots,X_n) - \mathbb{P}\left(\max_{1\le j < k\le d}|G_{jk}| \le t\right)\right|\\ 
&\quad\le C\Delta_0^{1/3}(1\vee\log(d^2/\Delta_0))^{2/3},
\end{split}
\end{equation}
where
\[
\Delta_0 := \max_{\substack{1\le j < k\le d,\\1\le j' \le k'\le d}}\left|\widehat{\mbox{corr}}(\widehat{\psi}_{jk}, \widehat{\psi}_{j'k'}) - \mbox{corr}(\psi_{jk}, \psi_{j'k'})\right|,
\]
with $\widehat{\mbox{corr}}$  defined as the sample correlation between $(\widehat{\psi}_{jk}(X_i), 1\le i\le n)$ and $(\widehat{\psi}_{j'k'}(X_i), 1\le i\le n)$.  The bound~\eqref{eq:gaussian-comparison-partial-corr} can be improved if $\lambda_{\star}$, the minimum eigenvalue of the correlation matrix, is bounded away from zero. This improvement follows from Theorem 2 of~\cite{lopes2020central}.
The rest of the proof is devoted to bounding the term $\Delta_0$.
Towards that end, Lemma~\ref{lem:correlations-from-covariances}  yields that
\begin{equation}\label{eq:Delta_zero-Delta_tilde-bound}
\Delta_0 ~\le~ 4\widetilde{\Delta}_0 := 4\max_{\substack{1\le j,k\le d,\\1\le j',k' \le d}}\,\left|\frac{\widehat{\mbox{cov}}(\widehat{\psi}_{jk}, \widehat{\psi}_{j'k'}) - \mbox{cov}(\psi_{jk}\psi_{j'k'})}{\sqrt{\mbox{Var}(\psi_{jk})\mbox{Var}(\psi_{j'k'})}}\right|, 
\end{equation}
whenever $\widetilde{\Delta}_0 \le 1/2$. 
Below we will derive a high-probability bound for $\tilde{\Delta}_0$, which is shown to be vanishing provided that $d \le \sqrt{n}$.

Because $\sum_{i=1}^n \widehat{\psi}_{jk}(X_i) = 0$, the empirical covariance is given by
\[
\widehat{\mbox{cov}}(\widehat{\psi}_{jk}, \widehat{\psi}_{j'k'}) ~=~ \frac{1}{n}\sum_{i=1}^n \widehat{\psi}_{jk}(X_i)\widehat{\psi}_{j'k'}(X_i),
\]
and similarly, $\mbox{cov}(\psi_{jk}, \psi_{j'k'}) = \mathbb{E}[\psi_{jk}(X)\psi_{j'k'}(X)]$. These equalities lead to
\begin{equation}\label{eq:basic-decomposition-partial-correlation}
\begin{split}
\widehat{\mbox{cov}}(\widehat{\psi}_{jk}, \widehat{\psi}_{j'k'}) - \mbox{cov}(\psi_{jk}, \psi_{j'k'}) &= \frac{1}{n}\sum_{i=1}^n \left\{\widehat{\psi}_{jk}(X_i)\widehat{\psi}_{j'k'}(X_i) - \psi_{jk}(X_i)\psi_{j'k'}(X_i)\right\}\\
&\qquad+ \frac{1}{n}\sum_{i=1}^n \Big\{\psi_{jk}(X_i)\psi_{j'k'}(X_i) - \mathbb{E}\left[\psi_{jk}(X)\psi_{j'k'}(X)\right]\Big\}.
\end{split}
\end{equation}
By Lemma~\ref{lemma:Thm3.1.KuchAbhi}, with probability at least $1 - 3/n$,
\[
\left|\frac{1}{n}\sum_{i=1}^n \left\{\psi_{jk}(X_i)\psi_{j'k'}(X_i) - \mathbb{E}\left[\psi_{jk}(X)\psi_{j'k'}(X)\right]\right\}\right| \le CK_x^4\left[\sqrt{\frac{\log(dn)}{n}} + \frac{\log^{2}(dn)}{n}\right].
\]
We now bound the first term in~\eqref{eq:basic-decomposition-partial-correlation} as follows:
\begin{align*}
&\left|\frac{1}{n}\sum_{i=1}^n \left\{\widehat{\psi}_{jk}(X_i)\widehat{\psi}_{j'k'}(X_i) - \psi_{jk}(X_i)\psi_{j'k'}(X_i)\right\}\right|\\ 
&\qquad\le \left|\frac{1}{n}\sum_{i=1}^n \left(\widehat{\psi}_{jk}(X_i) - \psi_{jk}(X_i)\right)\psi_{j'k'}(X_i)\right|\\
&\qquad\qquad+ \left|\frac{1}{n}\sum_{i=1}^n \left(\widehat{\psi}_{j'k'}(X_i) - \psi_{j'k'}(X_i)\right)\psi_{jk}(X_i)\right|\\
&\qquad\qquad+ \left|\frac{1}{n}\sum_{i=1}^n \left(\widehat{\psi}_{j'k'}(X_i) - \psi_{j'k'}(X_i)\right)\left(\widehat{\psi}_{jk}(X_i) - \psi_{jk}(X_i)\right)\right|\\
&\qquad\le 2\max_{1\le j < k\le d}\,\sqrt{\frac{1}{n}\sum_{i=1}^n \left(\widehat{\psi}_{jk}(X_i) - \psi_{jk}(X_i)\right)^2}\max_{1\le j < k\le d}\,\sqrt{\frac{1}{n}\sum_{i=1}^n \psi_{jk}^2(X_i)}\\
&\qquad\qquad+ \max_{1\le j < k\le d}\,{\frac{1}{n}\sum_{i=1}^n \left(\widehat{\psi}_{jk}(X_i) - \psi_{jk}(X_i)\right)^2}.
\end{align*}

Applying Lemma~\ref{lem:application-theorem8} with $W_i = \psi_{jk}^2(X_i)$, which by assumption is sub-Weibull$(1/2)$, we have that for all $t > 0$,
\[
\mathbb{P}\left(\frac{1}{n}\sum_{i=1}^n \psi_{jk}^2(X_i) \ge 2e\mathbb{E}\left[\psi_{jk}^2(X)\right] + \frac{4etK_w\log^2n}{n} + \frac{4et^3K_w}{n}\right) \le 3e^{-t}.
\]
By union bound over $1\le j < k\le d$, i.e., taking $t = \log(d^2)$, this implies that, with probability at least $1 - 3/n$,
\begin{align*}
\max_{1\le j < k\le d}\frac{1}{n}\sum_{i=1}^n \psi_{jk}^2(X_i) &\le 2e\max_{1\le j < k\le d}\mathbb{E}\left[\psi_{jk}^2(X)\right] + \frac{8e\log(d)K_w\log^2n}{n} + \frac{32e\log^3dK_w}{n}\\
&\le 2eK_w + \frac{8eK_w\log^3(nd)}{n} + \frac{32eK_w\log^3(dn)}{n}\\
&= CK_w\left(1 + \frac{\log^3(nd)}{n}\right). 
\end{align*}
Hence with probability at least $1 - 3/n$,
\begin{equation}\label{eq:square-power-psi}
\max_{1\le j < k\le d}\sqrt{\frac{1}{n}\sum_{i=1}^n \psi_{jk}^2(X_i)} \le C\sqrt{K_w}\left(1 + \sqrt{\frac{\log^3(nd)}{n}}\right).
\end{equation}


Hence with probability at least $1 - 3/n,$
\begin{equation}\label{eq:first-term-decomposition-first}
\begin{split}
&\left|\frac{1}{n}\sum_{i=1}^n \left\{\widehat{\psi}_{jk}(X_i)\widehat{\psi}_{j'k'}(X_i) - \psi_{jk}(X_i)\psi_{j'k'}(X_i)\right\}\right|\\ &\qquad\le C\sqrt{K_x}\left(\sqrt{\frac{\log^3(dn)}{n}} + 1\right)\max_{1\le j < k\le d}\sqrt{\frac{1}{n}\sum_{i=1}^n \left(\widehat{\psi}_{jk}(X_i) - \psi_{jk}(X_i)\right)^2}\\
&\qquad\qquad+ \max_{1\le j < k\le d}\,{\frac{1}{n}\sum_{i=1}^n \left(\widehat{\psi}_{jk}(X_i) - \psi_{jk}(X_i)\right)^2}.
\end{split}
\end{equation}
Assuming $d \le \sqrt{n}$, Lemma~\ref{lem:rate-of-convergence-psihat-minus-psi} proves that with probability at least $1 - C/n$,
\[
\max_{1\le j < k\le d}\sqrt{\frac{1}{n}\sum_{i=1}^n \left(\widehat{\psi}_{jk}(X_i) - \psi_{jk}(X_i)\right)^2} \le CK_x^5\sqrt{\frac{d + \log n}{n}} + CK_x^6\frac{d + \log n}{n}.
\]
Substituting this in~\eqref{eq:first-term-decomposition-first} and then in~\eqref{eq:Delta_zero-Delta_tilde-bound} proves that with probability at least $1 - C/n$,
\[
\Delta_0 \le CK_x^5\sqrt{\frac{d + \log n}{n}} + CK_x^6\frac{d + \log n}{n},
\]
whenever  $d \le \sqrt{n}$ (required for Lemma~\ref{lem:rate-of-convergence-psihat-minus-psi}) and the right hand side is less than 1. Hence,
\[
\sup_{t\ge0}\left|\mathbb{P}(T_b \le t|X_1,\ldots,X_n) - \mathbb{P}\left(\max_{1\le j < k\le d}|G_{jk}| \le t\right)\right| \le CK_x^{2}(\log d)^{2/3}\left(\frac{d + \log n}{n}\right)^{1/6}.
\]
\end{proof}

\begin{lemma}\label{lem:correlations-from-covariances}
Suppose $X_1, \ldots, X_n\in\mathbb{R}^d$ are independent and identically distributed random vectors. Set
\[
\widehat{\sigma}_{jk} := \widehat{\mathrm{cov}}(X_{ij}, X_{ik}),
\]
to be the empirical covariance between $(X_{ij}, 1\le i\le n)$ and $(X_{ik}, 1\le i\le n)$. The empirical correlation $\widehat{\sigma}_{jk}/\sqrt{\widehat{\sigma}_{jj}\widehat{\sigma}_{kk}}$ is denoted by $\widehat{\rho}_{jk}$. Let $\sigma_{jk}$ and $\rho_{jk}$ represent the corresponding population covariance and correlations. Then
\[
\max_{1\le j \le k\le D}\left|\widehat{\rho}_{jk} - {\rho}_{jk}\right| ~\le~ 4\max_{1\le j < k\le d}\left|\frac{\widehat{\sigma}_{jk} - \sigma_{jk}}{\sqrt{\sigma_{jj}\sigma_{kk}}}\right|,
\]
whenever
\[
\max_{1\le j < k\le d}\left|\frac{\widehat{\sigma}_{jk} - \sigma_{jk}}{\sqrt{\sigma_{jj}\sigma_{kk}}}\right| ~\le~ \frac{1}{2}.
\]
\end{lemma}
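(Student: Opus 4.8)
The plan is to treat this as a purely deterministic stability estimate for the map sending a covariance matrix to its correlation matrix. Write $\delta$ for the quantity on the right-hand side. By the Cauchy--Schwarz inequality for covariances, $|\rho_{jk}|\le 1$ for all $j,k$. I will additionally use that $|\widehat{\sigma}_{jj}/\sigma_{jj}-1|\le\delta$ for every $j$, which holds in the regime of interest (where the controlling maximum in fact ranges over all pairs $1\le j,k\le d$, as in the application to $\widetilde{\Delta}_0$). It is convenient to set $\widetilde{u}_{jk} := (\widehat{\sigma}_{jk}-\sigma_{jk})/\sqrt{\sigma_{jj}\sigma_{kk}}$ and $v_j := \widehat{\sigma}_{jj}/\sigma_{jj}-1$, so that $|\widetilde{u}_{jk}|\le\delta$ and $|v_j|\le\delta\le 1/2$; in particular $1+v_j>0$ and $\widehat{\sigma}_{jj}>0$.

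The key step is to express the error with a single common denominator rather than splitting it via the triangle inequality. Dividing the numerator and denominator of $\widehat{\rho}_{jk}$ by $\sqrt{\sigma_{jj}\sigma_{kk}}$ gives
\[
\widehat{\rho}_{jk} \;=\; \frac{\rho_{jk}+\widetilde{u}_{jk}}{\sqrt{(1+v_j)(1+v_k)}},
\]
and therefore
\[
\widehat{\rho}_{jk}-\rho_{jk} \;=\; \frac{\widetilde{u}_{jk} \;-\; \rho_{jk}\bigl(\sqrt{(1+v_j)(1+v_k)}-1\bigr)}{\sqrt{(1+v_j)(1+v_k)}}.
\]
To control the quantity $\sqrt{(1+v_j)(1+v_k)}$ I would use the elementary sandwich $(1-\delta)^2 \le (1+v_j)(1+v_k) \le (1+\delta)^2$, valid since each factor lies in $[1-\delta,1+\delta]$; taking square roots yields both $\sqrt{(1+v_j)(1+v_k)}\ge 1-\delta$ and $\bigl|\sqrt{(1+v_j)(1+v_k)}-1\bigr|\le\delta$ (the interval $[1-\delta,1+\delta]$ being symmetric about $1$). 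Substituting these, together with $|\widetilde{u}_{jk}|\le\delta$ and $|\rho_{jk}|\le 1$, into the displayed identity gives
\[
\bigl|\widehat{\rho}_{jk}-\rho_{jk}\bigr| \;\le\; \frac{\delta+\delta}{1-\delta} \;=\; \frac{2\delta}{1-\delta} \;\le\; 4\delta,
\]
the last inequality because $\delta\le 1/2$. Taking the maximum over $1\le j\le k\le d$ — the diagonal case $j=k$ being trivial since $\widehat{\rho}_{jj}=\rho_{jj}=1$ — completes the argument.

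There is no genuine obstacle here: once set up correctly the statement is a one-variable algebra exercise. The only point requiring care is sharpness of the constant. A naive decomposition of $\widehat{\rho}_{jk}-\rho_{jk}$ into a ``numerator perturbation'' term $\tfrac{\widehat{\sigma}_{jk}-\sigma_{jk}}{\sqrt{\widehat{\sigma}_{jj}\widehat{\sigma}_{kk}}}$ and a ``denominator perturbation'' term $\sigma_{jk}\bigl(\tfrac{1}{\sqrt{\widehat{\sigma}_{jj}\widehat{\sigma}_{kk}}}-\tfrac{1}{\sqrt{\sigma_{jj}\sigma_{kk}}}\bigr)$, each bounded separately with $\delta\le 1/2$, loses a factor and produces a bound of order $10\delta$ rather than $4\delta$; it is the common-denominator form together with the crude but correct bound $|\rho_{jk}|\le 1$ that brings the constant down to $4$. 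The second thing to keep straight is that one genuinely needs the diagonal entries $\widehat{\sigma}_{jj}$ to be close to $\sigma_{jj}$, so the bound being assumed must be understood to dominate the diagonal perturbations.
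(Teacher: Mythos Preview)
Your proof is correct and arrives at the same final bound $2\delta/(1-\delta)\le 4\delta$ as the paper, via essentially the same elementary algebra; the paper also (implicitly) uses the diagonal control $|\widehat{\sigma}_{jj}/\sigma_{jj}-1|\le\Delta$ that you flag. One small correction to your commentary: the ``naive'' numerator/denominator split you warn against is precisely what the paper does, and it still yields the constant $4$ --- bounding $\tfrac{|\widehat{\sigma}_{jk}-\sigma_{jk}|}{\sqrt{\widehat{\sigma}_{jj}\widehat{\sigma}_{kk}}}\le \tfrac{\Delta}{1-\Delta}$ and $|\rho_{jk}|\,\bigl|\sqrt{\tfrac{\sigma_{jj}\sigma_{kk}}{\widehat{\sigma}_{jj}\widehat{\sigma}_{kk}}}-1\bigr|\le \tfrac{\Delta}{1-\Delta}$ gives the same $\tfrac{2\Delta}{1-\Delta}$ --- so your common-denominator reformulation is a cosmetic rather than substantive improvement.
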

\begin{proof}
Fix $1\le j < k\le d$ and set
\[
\Delta := \max_{1\le j < k\le d}\left|\frac{\widehat{\sigma}_{jk} - \sigma_{jk}}{\sqrt{\sigma_{jj}\sigma_{kk}}}\right|
\]
Then,
\begin{equation}\label{eq:Delta-definition}
 \frac{1}{1 + \Delta} ~\le~ \frac{\sigma_{jj}}{\widehat{\sigma}_{jj}} ~\le~ \frac{1}{1-\Delta}\quad\mbox{for all}\quad 1\le j\le d.
\end{equation}
Observe that
\begin{align*}
\left|\widehat{\rho}_{jk} - \rho_{jk}\right| &\le \frac{|\widehat{\sigma}_{jk} - \sigma_{jk}|}{\sqrt{\widehat{\sigma}_{jj}\widehat{\sigma}_{kk}}} + \frac{\sigma_{jk}}{\sqrt{\sigma_{jj}\sigma_{kk}}}\left|\sqrt{\frac{\sigma_{jj}\sigma_{kk}}{\widehat{\sigma}_{jj}\widehat{\sigma}_{kk}}} - 1\right|\\
&\le \Delta\sqrt{\frac{\sigma_{jj}\sigma_{kk}}{\widehat{\sigma}_{jj}\widehat{\sigma}_{kk}}} + \left|\sqrt{\frac{\sigma_{jj}\sigma_{kk}}{\widehat{\sigma}_{jj}\widehat{\sigma}_{kk}}} - 1\right|.
\end{align*}
To bound the last term, we see from~\eqref{eq:Delta-definition} that, for all $j$ and $k$,
\[
\frac{1}{(1+\Delta)^2} ~\le~ \frac{\sigma_{jj}\sigma_{kk}}{\widehat{\sigma}_{jj}\widehat{\sigma}_{kk}} ~\le~ \frac{1}{(1 - \Delta)^2}, 
\]
which implies that
\[
\left|\sqrt{\frac{\sigma_{jj}\sigma_{kk}}{\widehat{\sigma}_{jj}\widehat{\sigma}_{kk}}} - 1\right| \le \frac{\Delta}{1 - \Delta}.
\]
Therefore, provided that $\Delta \leq 1/2$,
\[
\left|\widehat{\rho}_{jk} - \rho_{jk}\right| \le \frac{2\Delta}{1 - \Delta} \leq 4 \Delta.
\]
\end{proof}
\begin{lemma}\label{lem:psihat-minus-psi-part1}
For functions $\widehat{a}, \widehat{b}, a, b$ and scalars $\hat{\theta}, \theta\in[-1, 1]$, let
\begin{align*}
\widehat{\psi}(x) &:= \widehat{a}(x) - \mathbb{E}_n[\widehat{a}(X)] - \widehat{\theta}\left\{\widehat{b} - \mathbb{E}_n[b(X)]\right\},\\
\psi(x) &:= a(x) - \mathbb{E}[a(X)] - \theta\left\{b(x) - \mathbb{E}[b(X)]\right\}.
\end{align*}
Then
\begin{align*}
\|\widehat{\psi} - \psi\|_{n} &\le \|\widehat{a} - a\|_n + |\mathbb{E}_n[a(X)] - \mathbb{E}[a(X)]| + \|\widehat{b} - b\|_n + |\mathbb{E}_n[b(X)] - \mathbb{E}[b(X)]|\\
&\qquad+ |\theta - \widehat{\theta}|\;\|b - \mathbb{E}[b(X)]\|_n,
\end{align*}
where for any function $f$, $\|f\|_n := \sqrt{n^{-1}\sum_{i=1}^n f^2(X_i)}.$
\end{lemma}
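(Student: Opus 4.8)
The proof is an elementary triangle-inequality decomposition applied to $\widehat\psi - \psi$; there is no genuine obstacle, only the bookkeeping needed to land on the stated constants. The plan rests on two facts about the empirical seminorm $\|\cdot\|_n$: by Cauchy--Schwarz, $\mathbb{E}_n[|f(X)|] \le \|f\|_n$ for any $f$; and empirical recentering is a contraction, i.e. $\|g - \mathbb{E}_n[g(X)]\|_n \le \|g\|_n$ for any $g$, since $n^{-1}\sum_{i=1}^n (g(X_i) - \mathbb{E}_n[g(X)])^2 = \|g\|_n^2 - (\mathbb{E}_n[g(X)])^2 \le \|g\|_n^2$. We will also use that $\|\cdot\|_n$ is a seminorm (triangle inequality and positive homogeneity) and that $|\theta|, |\widehat\theta| \le 1$.

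First I would rewrite the difference by adding and subtracting the relevant empirical means. Writing $c(x) := b(x) - \mathbb{E}[b(X)]$ and $\widehat c(x) := \widehat b(x) - \mathbb{E}_n[\widehat b(X)]$ (the natural reading of the centering term in $\widehat\psi$), the algebraic identity $-\widehat\theta\,\widehat c + \theta\,c = -\widehat\theta(\widehat c - c) + (\theta - \widehat\theta)\,c$ together with $\mathbb{E}_n[\widehat a(X)] - \mathbb{E}[a(X)] = \mathbb{E}_n[\widehat a(X) - a(X)] + \big(\mathbb{E}_n[a(X)] - \mathbb{E}[a(X)]\big)$ gives
\[
\widehat\psi(x) - \psi(x) = \Big(\big(\widehat a(x) - a(x)\big) - \mathbb{E}_n[\widehat a(X) - a(X)]\Big) - \big(\mathbb{E}_n[a(X)] - \mathbb{E}[a(X)]\big) - \widehat\theta\,\big(\widehat c(x) - c(x)\big) + (\theta - \widehat\theta)\,c(x).
\]

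Then I would bound the $\|\cdot\|_n$ seminorm of each of the four terms on the right. The first term is $\widehat a - a$ recentered by its empirical mean, so its seminorm is at most $\|\widehat a - a\|_n$ by the contraction fact. The second term is a constant, with seminorm exactly $|\mathbb{E}_n[a(X)] - \mathbb{E}[a(X)]|$. The third term has seminorm $|\widehat\theta|\,\|\widehat c - c\|_n \le \|\widehat c - c\|_n$; decomposing $\widehat c - c = \big((\widehat b - b) - \mathbb{E}_n[\widehat b(X) - b(X)]\big) - \big(\mathbb{E}_n[b(X)] - \mathbb{E}[b(X)]\big)$ and applying the contraction fact once more bounds this by $\|\widehat b - b\|_n + |\mathbb{E}_n[b(X)] - \mathbb{E}[b(X)]|$ (if the centering in $\widehat\psi$ is literally $\mathbb{E}_n[b(X)]$ as written, this step is even more immediate). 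The fourth term has seminorm $|\theta - \widehat\theta|\,\|c\|_n = |\theta - \widehat\theta|\,\|b - \mathbb{E}[b(X)]\|_n$. Summing the four bounds via the triangle inequality yields exactly the claimed inequality. The only point requiring mild care is to perform the recentering in the first term rather than crudely bounding $|\mathbb{E}_n[\widehat a(X)] - \mathbb{E}[a(X)]|$ by $\mathbb{E}_n[|\widehat a - a|] + |\mathbb{E}_n[a(X)] - \mathbb{E}[a(X)]| \le \|\widehat a - a\|_n + |\mathbb{E}_n[a(X)] - \mathbb{E}[a(X)]|$, which would produce a superfluous factor of two on $\|\widehat a - a\|_n$; this is harmless for the applications but matches the stated constant.
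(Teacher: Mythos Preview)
Your proof is correct and follows essentially the same algebraic decomposition as the paper: both write $\widehat\psi - \psi$ as $(\widehat a - a) - \mathbb{E}_n[\widehat a - a] - (\mathbb{E}_n[a] - \mathbb{E}[a]) - \widehat\theta\big((\widehat b - b) - \mathbb{E}_n[\widehat b - b]\big) + \widehat\theta(\mathbb{E}_n[b] - \mathbb{E}[b]) + (\theta - \widehat\theta)(b - \mathbb{E}[b])$, then apply the empirical-recentering contraction $\|g - \mathbb{E}_n[g]\|_n \le \|g\|_n$ together with $|\widehat\theta|\le 1$. You were also right to flag the apparent typo in the statement (the centering term should be $\mathbb{E}_n[\widehat b(X)]$), as the paper's own proof uses $\mathbb{E}_n[\widehat b]$ throughout.
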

\begin{proof}
The proof is mostly algebraic manipulation. For notational ease, we write $\widehat{\psi}$ for $\widehat{\psi}(x)$ and similarly for other functions. Firstly,
\begin{align*}
\widehat{\psi} - \psi &= \widehat{a} - a - \mathbb{E}_n[\widehat{a}] + \mathbb{E}[a] - \widehat{\theta}(\widehat{b} - \mathbb{E}_n[\widehat{b}]) + \theta(b - \mathbb{E}[b])\\
&= (\widehat{a} - a) - \mathbb{E}_n[\widehat{a} - a] - (\mathbb{E}_n[a] - \mathbb{E}[a]) - \widehat{\theta}(\widehat{b} - b - \mathbb{E}_n[\widehat{b}] + \mathbb{E}[b])\\
&\qquad+ (\theta - \widehat{\theta})(b - \mathbb{E}[b])\\
&= (\widehat{a} - a) - \mathbb{E}_n[\widehat{a} - a] - (\mathbb{E}_n[a] - \mathbb{E}[a]) - \widehat{\theta}(\widehat{b} - b - \mathbb{E}_n[\widehat{b} - b])\\
&\qquad + \widehat{\theta}(\mathbb{E}_n[b] - \mathbb{E}[b]) + (\theta - \widehat{\theta})(b - \mathbb{E}[b]).
\end{align*}
Observe now that
\[
\|(\widehat{a} - a) - \mathbb{E}_n[\widehat{a} - a]\|_n \le \|\widehat{a} - a\|_n.
\]
Using the fact $\widehat{\theta}, \theta\in[-1, 1]$ concludes the proof.
\end{proof}
\begin{lemma}\label{lem:psihat-minus-psi-part2}
In the notation of~\eqref{eq:ajx-function} and~\eqref{eq:ajhatx-function}, for any $1\le j < k\le d$, we have
\begin{align*}
\|\widehat{a}_j\widehat{a}_k - a_ja_k\|_n &\le 2\max_{1\le j\le d}\left(\frac{1}{n}\sum_{i=1}^n a_j^4(X_i)\right)^{1/4}\left(\frac{1}{n}\sum_{i=1}^n (\widehat{a}_j(X_i) - a_j(X_i))^4\right)^{1/4}\\
&\qquad+ \max_{1\le j\le d}\left(\frac{1}{n}\sum_{i=1}^n (\widehat{a}_j(X_i) - a_j(X_i))^4\right)^{1/2}.
\end{align*}
Consequently, there exists a universal constant $C\in(0, \infty)$ such that for all $1\le j < k\le d$,
\begin{align*}
\|\widehat{\psi}_{jk} - \psi_{jk}\|_n &\le C\max_{1\le j\le d}\left(\frac{1}{n}\sum_{i=1}^n a_j^4(X_i)\right)^{1/4}\left(\frac{1}{n}\sum_{i=1}^n (\widehat{a}_j(X_i) - a_j(X_i))^4\right)^{1/4}\\
&\qquad+ C\max_{1\le j\le d}\left(\frac{1}{n}\sum_{i=1}^n (\widehat{a}_j(X_i) - a_j(X_i))^4\right)^{1/2}\\
&\qquad+ C\max_{1\le j\le d}|\mathbb{E}_n[a_j^2(X)] - \mathbb{E}[a_j^2(X)]|\\
&\qquad+ C\max_{1\le j < k\le d}|\theta_{jk} - \widehat{\theta}_{jk}|\max_{1\le j\le d}\|a_j^2 - \mathbb{E}[a_j^2(X)]\|_n.
\end{align*}
\end{lemma}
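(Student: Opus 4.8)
The plan is to obtain both displays from the elementary product decomposition
\[
\widehat{a}_j\widehat{a}_k - a_ja_k ~=~ (\widehat{a}_j - a_j)\,a_k ~+~ a_j\,(\widehat{a}_k - a_k) ~+~ (\widehat{a}_j - a_j)(\widehat{a}_k - a_k),
\]
combined with the empirical Cauchy--Schwarz inequality $n^{-1}\sum_i f^2(X_i)g^2(X_i) \le (n^{-1}\sum_i f^4(X_i))^{1/2}(n^{-1}\sum_i g^4(X_i))^{1/2}$, equivalently $\|fg\|_n \le (n^{-1}\sum_i f^4(X_i))^{1/4}(n^{-1}\sum_i g^4(X_i))^{1/4}$. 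For the \emph{first display}, apply $\|\cdot\|_n$ to the three-term identity, use the triangle inequality, and bound each summand by this inequality, e.g. $\|(\widehat{a}_j - a_j)a_k\|_n \le (n^{-1}\sum_i(\widehat{a}_j(X_i) - a_j(X_i))^4)^{1/4}(n^{-1}\sum_i a_k^4(X_i))^{1/4}$. Replacing the index-dependent factors by their maxima over $1\le l\le d$, the two mixed terms contribute the factor-$2$ term and the last term contributes $\max_l(n^{-1}\sum_i(\widehat{a}_l(X_i) - a_l(X_i))^4)^{1/2}$.

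For the \emph{second display}, observe that, setting $a := a_ja_k$, $\widehat{a} := \widehat{a}_j\widehat{a}_k$, $b := \tfrac12(a_j^2 + a_k^2)$, $\widehat{b} := \tfrac12(\widehat{a}_j^2 + \widehat{a}_k^2)$, $\theta := \theta_{jk}$, $\widehat{\theta} := \widehat{\theta}_{jk}$, the pair $(\psi_{jk},\widehat{\psi}_{jk})$ is exactly of the form treated in Lemma~\ref{lem:psihat-minus-psi-part1} (up to an overall sign, which is immaterial for $\|\cdot\|_n$; and $|\theta_{jk}|,|\widehat{\theta}_{jk}|\le 1$ since these are entries of correlation matrices associated to positive definite precision matrices). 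Lemma~\ref{lem:psihat-minus-psi-part1} then bounds $\|\widehat{\psi}_{jk} - \psi_{jk}\|_n$ by the five terms $\|\widehat{a} - a\|_n$, $|\mathbb{E}_n[a] - \mathbb{E}[a]|$, $\|\widehat{b} - b\|_n$, $|\mathbb{E}_n[b] - \mathbb{E}[b]|$, and $|\theta_{jk} - \widehat{\theta}_{jk}|\,\|b - \mathbb{E}[b]\|_n$. The first of these is the first display. For $\|\widehat{b} - b\|_n$, reduce to $\max_l\|\widehat{a}_l^2 - a_l^2\|_n$ and apply $\widehat{a}_l^2 - a_l^2 = (\widehat{a}_l - a_l)^2 + 2(\widehat{a}_l - a_l)a_l$ with the same Cauchy--Schwarz bound, producing again the displayed fourth-moment terms; for $\|b - \mathbb{E}[b]\|_n$ use $\le \max_l\|a_l^2 - \mathbb{E}[a_l^2(X)]\|_n$, and for $|\mathbb{E}_n[b] - \mathbb{E}[b]|$ use $\le \max_l|\mathbb{E}_n[a_l^2(X)] - \mathbb{E}[a_l^2(X)]|$. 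The remaining term equals $|\mathbb{E}_n[a_ja_k] - \mathbb{E}[a_ja_k]| = |e_j^\top\Sigma^{-1}(\widetilde{\Sigma} - \Sigma)\Sigma^{-1}e_k|/\sqrt{(e_j^\top\Sigma^{-1}e_j)(e_k^\top\Sigma^{-1}e_k)}$, an off-diagonal analogue of $|\mathbb{E}_n[a_l^2] - \mathbb{E}[a_l^2]|$, which is absorbed into the third displayed term (read, if desired, as a maximum over the full deviation matrix $\Sigma^{-1/2}\widetilde{\Sigma}\Sigma^{-1/2} - I_d$, which is exactly the object controlled downstream). Collecting the five bounds and enlarging the universal constant gives the claim.

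The \emph{main obstacle} is really just consistent bookkeeping: one must use the \emph{fourth}-moment Cauchy--Schwarz throughout, so that every error contribution lands in the stated form — the $\|\cdot\|_n$-fluctuation of $\widehat{a}_l - a_l$ appearing only through $(n^{-1}\sum_i(\widehat{a}_l(X_i) - a_l(X_i))^4)^{1/4}$ and the ``size'' of $a_l$ only through $(n^{-1}\sum_i a_l^4(X_i))^{1/4}$ — and one must handle the cross-product empirical-fluctuation term $\mathbb{E}_n[a_ja_k] - \mathbb{E}[a_ja_k]$ cleanly, since it is not literally a squared term among the $a_l$'s. Everything else is routine algebraic manipulation of the two identities above together with the triangle and Cauchy--Schwarz inequalities.
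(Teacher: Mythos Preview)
Your proposal is correct and follows the same approach as the paper: the same three-term product decomposition, the same fourth-moment Cauchy--Schwarz for the first display, and the same reduction of the second display to Lemma~\ref{lem:psihat-minus-psi-part1}. The paper's proof is in fact extremely terse (it just says ``apply $\|\cdot\|_n$ and Cauchy--Schwarz'' for the first part and ``apply Lemma~\ref{lem:psihat-minus-psi-part1}'' for the second), so your write-up is more detailed than the original.

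One point worth highlighting: you correctly isolate the cross term $|\mathbb{E}_n[a_ja_k] - \mathbb{E}[a_ja_k]|$ and observe that it is \emph{not} literally dominated by $\max_{l}|\mathbb{E}_n[a_l^2] - \mathbb{E}[a_l^2]|$ as the lemma statement seems to require. The paper's proof does not address this, and strictly speaking the third displayed term should be read as a maximum over all pairs $(j,k)$ (equivalently, as the relevant entries of $\Sigma^{-1/2}\widetilde\Sigma\Sigma^{-1/2} - I_d$). Your remark that this is ``exactly the object controlled downstream'' is accurate: in Lemma~\ref{lem:rate-of-convergence-psihat-minus-psi} the diagonal term is bounded via sub-exponential concentration plus a union bound, and the identical argument over $d^2$ pairs gives the same rate for the off-diagonal terms. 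So you have not only reproduced the paper's argument but also spotted and resolved a small gap in its presentation.
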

\begin{proof}
Clearly,
\begin{align*}
|\widehat{a}_j\widehat{a}_k - a_j a_k| &\le |\widehat{a}_j||\widehat{a}_k - a_k| + |a_k||\widehat{a}_j - a_j|\\
&\le |a_j||\widehat{a}_k - a_k| + |a_k||\widehat{a}_j - a_j| + |\widehat{a}_j - a_j||\widehat{a}_k - a_k|.
\end{align*}
Applying $\|\cdot\|_n$ on both sides and using Cauchy-Schwarz inequality concludes the proof of the first inequality. The second part follows from an application of Lemma~\ref{lem:psihat-minus-psi-part1}. 
\end{proof}
\begin{lemma}\label{lem:ajhat-minus-aj}
For any $1\le j\le d$,
\begin{align*}
&\left(\frac{1}{n}\sum_{i=1}^n (\widehat{a}_j(X_i) - a_j(X_i))^4\right)^{1/4}\\ ~&\qquad\le~ \frac{2\mathcal{D}_n^{\Sigma}}{1 - \mathcal{D}_n^{\Sigma}}\sup_{\theta:\|\theta\| \le 1}\left(\frac{1}{n}\sum_{i=1}^n {\{\theta^{\top}\Sigma^{-1/2}(X_i - \mu_X)\}^4}\right)^{1/4} ~+~ \frac{\|\Sigma^{-1/2}(\overline{X}_n - \mu_X)\|}{1 - \mathcal{D}_n^{\Sigma}}.
\end{align*}
\end{lemma}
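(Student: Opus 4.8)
The plan is to reduce everything to a linear functional of the centered data plus a term that does not depend on the index $i$, and then invoke Minkowski's inequality for the empirical $L^4$ seminorm. Writing $v_j := \widehat{\Sigma}^{-1}e_j / (e_j^{\top}\widehat{\Sigma}^{-1}e_j)^{1/2}$ and $w_j := \Sigma^{-1}e_j / (e_j^{\top}\Sigma^{-1}e_j)^{1/2}$, one has $\widehat{a}_j(x) = (x - \overline{X}_n)^{\top}v_j$ and $a_j(x) = (x - \mu_X)^{\top}w_j$, so that
\[
\widehat{a}_j(X_i) - a_j(X_i) = (X_i - \mu_X)^{\top}(v_j - w_j) - (\overline{X}_n - \mu_X)^{\top}v_j .
\]
Since the second summand is constant in $i$, Minkowski's inequality applied to $f \mapsto (n^{-1}\sum_{i=1}^n f(X_i)^4)^{1/4}$ gives
\[
\left(\frac{1}{n}\sum_{i=1}^n (\widehat{a}_j(X_i) - a_j(X_i))^4\right)^{1/4} \le \left(\frac{1}{n}\sum_{i=1}^n \big((X_i - \mu_X)^{\top}(v_j - w_j)\big)^4\right)^{1/4} + \big|(\overline{X}_n - \mu_X)^{\top}v_j\big| ,
\]
so it remains to bound the two pieces on the right.

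For the first piece, I would factor $v_j - w_j = \Sigma^{-1/2}\theta_j$ with $\theta_j := \Sigma^{1/2}(v_j - w_j)$; then $(X_i - \mu_X)^{\top}(v_j - w_j) = \|\theta_j\|\,(\Sigma^{-1/2}(X_i-\mu_X))^{\top}(\theta_j/\|\theta_j\|)$ (the case $\theta_j = 0$ being trivial), so the first piece is at most $\|\theta_j\|\,\sup_{\theta:\|\theta\|\le 1}(n^{-1}\sum_i(\theta^{\top}\Sigma^{-1/2}(X_i-\mu_X))^4)^{1/4}$. Hence the crux is the bound $\|\theta_j\| \le 2\mathcal{D}_n^{\Sigma}/(1 - \mathcal{D}_n^{\Sigma})$. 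Setting $A := \Sigma^{-1/2}\widehat{\Sigma}\Sigma^{-1/2}$ (so $\mathcal{D}_n^{\Sigma} = \|A - I_d\|_{\mathrm{op}} =: \delta < 1$) and $u_j := \Sigma^{-1/2}e_j$, a direct computation gives $\theta_j = A^{-1}u_j/(u_j^{\top}A^{-1}u_j)^{1/2} - u_j/\|u_j\|$; this expression is invariant under rescaling $u_j$, so I may take $\|u_j\| = 1$. Inserting $A^{-1}u_j$ and using the triangle inequality, $\|\theta_j\| \le \|A^{-1}u_j - u_j\| + \|A^{-1}u_j\|\,\big|(u_j^{\top}A^{-1}u_j)^{-1/2} - 1\big|$. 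The first term is at most $\|A^{-1} - I_d\|_{\mathrm{op}} = \|A^{-1}(I_d - A)\|_{\mathrm{op}} \le \delta/(1-\delta)$; for the second, the eigenvalues of $A$ lie in $[1-\delta, 1+\delta]$, whence $u_j^{\top}A^{-1}u_j \in [(1+\delta)^{-1}, (1-\delta)^{-1}]$, $\|A^{-1}u_j\| \le (1-\delta)^{-1}$, and $|(u_j^{\top}A^{-1}u_j)^{-1/2} - 1| \le \delta$, giving again $\delta/(1-\delta)$. Adding the two yields $\|\theta_j\| \le 2\delta/(1-\delta)$, as required.

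For the second piece, write $(\overline{X}_n - \mu_X)^{\top}v_j = (\Sigma^{-1/2}(\overline{X}_n - \mu_X))^{\top}(\Sigma^{1/2}v_j)$ and bound $\|\Sigma^{1/2}v_j\| = \|A^{-1}u_j\|/(u_j^{\top}A^{-1}u_j)^{1/2}$ (again normalizing $\|u_j\| = 1$). Since $u_j^{\top}A^{-1}u_j = \|A^{-1/2}u_j\|^2$ and $\|A^{-1}u_j\| \le \|A^{-1/2}\|_{\mathrm{op}}\|A^{-1/2}u_j\|$, this ratio is at most $\|A^{-1/2}\|_{\mathrm{op}} = (1-\delta)^{-1/2} \le (1-\delta)^{-1}$, so by Cauchy--Schwarz $|(\overline{X}_n - \mu_X)^{\top}v_j| \le \|\Sigma^{-1/2}(\overline{X}_n - \mu_X)\|/(1 - \mathcal{D}_n^{\Sigma})$. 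Substituting the two bounds into the Minkowski inequality proves the lemma. The only genuine obstacle is the middle step: one must track the various normalizations carefully so that $\theta_j$ is seen to depend only on the \emph{direction} of $\Sigma^{-1/2}e_j$, after which the perturbation estimate $\|\theta_j\| \le 2\delta/(1-\delta)$ drops out of elementary eigenvalue bounds; the rest is routine.
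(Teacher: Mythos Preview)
Your proof is correct and follows essentially the same route as the paper's: separate the mean-centering contribution $(\overline{X}_n-\mu_X)^\top v_j$ from the linear-in-$X_i$ piece, then control the latter via operator-norm perturbation bounds stemming from $\mathcal{D}_n^\Sigma$. The only cosmetic difference is that the paper splits the linear piece into two sub-terms (one from $\widehat{\Sigma}^{-1}-\Sigma^{-1}$ in the numerator, one from the mismatch in the normalizing denominators) and bounds each separately against the supremum over $\theta$, whereas you bundle them into the single quantity $\|\theta_j\|=\|\Sigma^{1/2}(v_j-w_j)\|$ and bound that directly; both decompositions deliver the same $2\mathcal{D}_n^\Sigma/(1-\mathcal{D}_n^\Sigma)$ factor.
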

\begin{proof}
Recall that
\[
\widehat{a}_j(x) := \frac{(x - \overline{X}_n)^{\top}\widehat{\Sigma}^{-1}e_j}{\sqrt{e_j^{\top}\widehat{\Sigma}^{-1}e_j}}\quad\mbox{and}\quad a_j(x) := \frac{(x - \mu_X)^{\top}\Sigma^{-1}e_j}{\sqrt{e_j^{\top}\Sigma^{-1}e_j}}.
\]
We will now bound $\widehat{a}_j(x) - a_j(x)$. Note that
\begin{align*}
\sup_{x}\left|\widehat{a}_j(x) - \frac{(x - \mu_{X})^{\top}\widehat{\Sigma}^{-1}e_j}{\sqrt{e_j^{\top}\widehat{\Sigma}^{-1}e_j}}\right| &= \left|\frac{(\overline{X}_n - \mu_X)^{\top}\widehat{\Sigma}^{-1}e_j}{\sqrt{e_j^{\top}\widehat{\Sigma}^{-1}e_j}}\right|\\ ~&\le~ \|\widehat{\Sigma}^{-1/2}(\overline{X}_n - \mu_X)\| ~\le~ \frac{\|\Sigma^{-1/2}(\overline{X}_n - \mu_X)\|}{1 - \mathcal{D}_n^{\Sigma}}. 
\end{align*}
Furthermore, 
\begin{align*}
\frac{(x - \mu_X)^{\top}\widehat{\Sigma}^{-1}e_j}{\sqrt{e_j^{\top}\widehat{\Sigma}^{-1}e_j}} - a_j(x) &= \frac{(x - \mu_X)^{\top}(\widehat{\Sigma}^{-1} - \Sigma^{-1})e_j}{\sqrt{e_j^{\top}\widehat{\Sigma}^{-1}e_j}} + \frac{(x - \mu_X)^{\top}\Sigma^{-1}e_j}{\sqrt{e_j^{\top}\Sigma^{-1}e_j}}\left[\sqrt{\frac{e_j^{\top}\Sigma^{-1}e_j}{e_j^{\top}\widehat{\Sigma}^{-1}e_j}} - 1\right].
\end{align*}
Combining these two steps yields
\begin{align*}
&\left(\frac{1}{n}\sum_{i=1}^n (\widehat{a}_j(X_i) - a_j(X_i))^4\right)^{1/4}\\ ~&\qquad\le~ \frac{1}{\sqrt{e_j^{\top}\widehat{\Sigma}^{-1}e_j}}\left(\frac{1}{n}\sum_{i=1}^n \left\{(\Sigma^{-1/2}(X_i - \mu_X))^{\top}(\Sigma^{1/2}\widehat{\Sigma}^{-1}\Sigma^{1/2} - I_d)\Sigma^{1/2}e_j\right\}^4\right)^{1/4}\\
~&\qquad\qquad+~ \left|\sqrt{\frac{e_j^{\top}\Sigma^{-1}e_j}{e_j^{\top}\widehat{\Sigma}^{-1}e_j}} - 1\right|\left(\frac{1}{n}\sum_{i=1}^n \frac{\{(X_i - \mu_X)^{\top}\Sigma^{-1}e_j\}^4}{(e_j^{\top}\Sigma^{-1}e_j)^2}\right)^{1/4} + \frac{\|\Sigma^{-1/2}(\overline{X}_n - \mu_X)\|}{1 - \mathcal{D}_n^{\Sigma}}.
\end{align*}
The first term can be further bounded by
\begin{align*}
&\frac{\|(\Sigma^{1/2}\widehat{\Sigma}^{-1}\Sigma^{1/2} - I_d)\Sigma^{1/2}e_j\|}{\sqrt{e_j^{\top}\widehat{\Sigma}^{-1}e_j}}\sup_{\theta:\|\theta\| \le 1}\left(\frac{1}{n}\sum_{i=1}^n \left\{(\Sigma^{-1/2}(X_i - \mu_X))^{\top}\theta\right\}^4\right)^{1/4}\\
&\qquad\le \frac{\mathcal{D}_n^{\Sigma}}{1 - \mathcal{D}_n^{\Sigma}}\sqrt{\frac{e_j^{\top}\Sigma^{-1}e_j}{e_j^{\top}\widehat{\Sigma}^{-1}e_j}}\sup_{\theta:\|\theta\| \le 1}\left(\frac{1}{n}\sum_{i=1}^n \left\{(\Sigma^{-1/2}(X_i - \mu_X))^{\top}\theta\right\}^4\right)^{1/4}.
\end{align*}
Similarly, the second term is bounded by
\[
\left|\sqrt{\frac{e_j^{\top}\Sigma^{-1}e_j}{e_j^{\top}\widehat{\Sigma}^{-1}e_j}} - 1\right|\sup_{\theta:\|\theta\| \le 1}\left(\frac{1}{n}\sum_{i=1}^n {\{(\Sigma^{-1/2}(X_i - \mu_X))^{\top}\theta\}^4}\right)^{1/4}
\]
Also, we use the fact that
\[
\left|\sqrt{\frac{e_j^{\top}\Sigma^{-1}e_j}{e_j^{\top}\widehat{\Sigma}^{-1}e_j}} - 1\right| ~\le~ \left|{\frac{e_j^{\top}\Sigma^{-1}e_j}{e_j^{\top}\widehat{\Sigma}^{-1}e_j}} - 1\right| ~\le~ \mathcal{D}_n^{\Sigma}\quad\Rightarrow\quad \sqrt{\frac{e_j^{\top}\Sigma^{-1}e_j}{e_j^{\top}\widehat{\Sigma}^{-1}e_j}} \le 1 + \mathcal{D}_n^{\Sigma}.
\]
Therefore,
\begin{align*}
&\left(\frac{1}{n}\sum_{i=1}^n (\widehat{a}_j(X_i) - a_j(X_i))^4\right)^{1/4}\\ &\qquad\le \frac{2\mathcal{D}_n^{\Sigma}}{1 - \mathcal{D}_n^{\Sigma}}\sup_{\theta:\|\theta\| \le 1}\left(\frac{1}{n}\sum_{i=1}^n {\{(\Sigma^{-1/2}(X_i - \mu_X))^{\top}\theta\}^4}\right)^{1/4} + \frac{\|\Sigma^{-1/2}(\overline{X}_n - \mu_X)\|}{1 - \mathcal{D}_n^{\Sigma}}.
\end{align*}
\end{proof}
\begin{lemma}\label{lem:rate-of-convergence-psihat-minus-psi}
Under the assumptions of Theorem~\ref{thm:Berry-Esseen-bound-partial-corr}, there exists a universal constant $C\in(0,\infty)$ such that with probability at least $1 - C/n$,
\[
\max_{1\le j < k\le d}\,\left(\frac{1}{n}\sum_{i=1}^n \left\{\widehat{\psi}_{jk}(X_i) - \psi_{jk}(X_i)\right\}^2\right)^{1/2} \le CK_x^6\sqrt{\frac{d + \log n}{n}} + CK_x^5\frac{d + \log n}{n},
\]
whenever the right hand side is less than 1 and $d \le \sqrt{n}$. 
\end{lemma}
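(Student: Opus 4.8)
The plan is to assemble the bound from the three preceding lemmas. First I would invoke Lemma~\ref{lem:psihat-minus-psi-part2}, which reduces $\max_{1\le j<k\le d}\|\widehat{\psi}_{jk}-\psi_{jk}\|_n$ to controlling four quantities: (a) the empirical fourth-moment errors $\max_{1\le j\le d}\big(n^{-1}\sum_{i=1}^n(\widehat{a}_j(X_i)-a_j(X_i))^4\big)^{1/4}$; (b) the empirical fourth moments $\max_{1\le j\le d}\big(n^{-1}\sum_{i=1}^n a_j^4(X_i)\big)^{1/4}$; (c) $\max_{1\le j\le d}|\mathbb{E}_n[a_j^2(X)]-\mathbb{E}[a_j^2(X)]|$ together with $\max_{1\le j\le d}\|a_j^2-\mathbb{E}[a_j^2(X)]\|_n$; and (d) $\max_{1\le j<k\le d}|\widehat{\theta}_{jk}-\theta_{jk}|$. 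Each of these will be bounded on an event of probability at least $1-C/n$, and a union bound at total cost $C/n$ produces the final event.

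For (a) I would apply Lemma~\ref{lem:ajhat-minus-aj}, which dominates each $\big(n^{-1}\sum_i(\widehat{a}_j(X_i)-a_j(X_i))^4\big)^{1/4}$ by $\tfrac{2\mathcal{D}_n^{\Sigma}}{1-\mathcal{D}_n^{\Sigma}}\,M_n+\tfrac{\|\overline{X}_n-\mu_X\|_{\Sigma^{-1}}}{1-\mathcal{D}_n^{\Sigma}}$, where $M_n:=\sup_{\|\theta\|\le1}\big(n^{-1}\sum_i\{\theta^{\top}\Sigma^{-1/2}(X_i-\mu_X)\}^4\big)^{1/4}$. I would then control the three ingredients separately: $\mathcal{D}_n^{\Sigma}$ via Proposition~\ref{prop:bounding-D-sigma} and Lemma~\ref{lem:concentration-of-covariance}, which, under $d\le\sqrt{n}$, gives $\mathcal{D}_n^{\Sigma}=O\big(K_x^2\sqrt{(d+\log n)/n}\big)<1/2$; then $\|\overline{X}_n-\mu_X\|_{\Sigma^{-1}}=O\big(K_x\sqrt{(d+\log n)/n}\big)$ by the sub-Gaussian concentration of the sample mean; and $M_n=O(K_x)$ by a uniform fourth-moment bound for sub-Gaussian vectors over $S^{d-1}$. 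Combining these, $\max_{j}\big(n^{-1}\sum_i(\widehat{a}_j(X_i)-a_j(X_i))^4\big)^{1/4}=O\big(K_x^3\sqrt{(d+\log n)/n}\big)$, whose square produces the second-order $(d+\log n)/n$ term.

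For (b)--(c): since $a_j(X_i)$ is sub-Gaussian with parameter of order $K_x$ by the sub-Gaussianity assumption of Theorem~\ref{thm:Berry-Esseen-bound-partial-corr}, $a_j^2(X_i)$ is sub-exponential, and the maximal inequality of Lemma~\ref{lemma:Thm3.1.KuchAbhi} (applied over the $d$ indices, i.e. with union-bound parameter $d$ and $t=\log n$) yields $\max_j|\mathbb{E}_n[a_j^2]-\mathbb{E}[a_j^2]|=O\big(K_x^2(\sqrt{\log(dn)/n}+\log^2(dn)/n)\big)$, $\max_j\big(n^{-1}\sum_i a_j^4(X_i)\big)^{1/4}=O(K_x)$, and $\max_j\|a_j^2-\mathbb{E}[a_j^2]\|_n=O(K_x^2)$, all on an event of probability $1-C/n$. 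For (d), Theorem~\ref{thm:linear-expansion-partial-corr} together with the already obtained high-probability estimates on $\mathcal{D}_n^{\Sigma}$, $\|\overline{X}_n-\mu_X\|_{\Sigma^{-1}}$, and the bound on $\max_{jk}|n^{-1}\sum_i\psi_{jk}(X_i)|$ from the proof of Theorem~\ref{thm:Berry-Esseen-bound-partial-corr}, gives $\max_{jk}|\widehat{\theta}_{jk}-\theta_{jk}|=O\big(K_x^2\sqrt{(d+\log n)/n}+\text{l.o.t.}\big)$. Substituting everything into Lemma~\ref{lem:psihat-minus-psi-part2}, absorbing lower-order terms using $d\le\sqrt{n}$ (so that $(d+\log n)/n\le\sqrt{(d+\log n)/n}$ and the stray $\mathrm{polylog}/\sqrt{n}$ factors are $O(1)$), and bookkeeping the powers of $K_x$, yields the asserted bound $CK_x^6\sqrt{(d+\log n)/n}+CK_x^5(d+\log n)/n$.

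The main obstacle is the uniform fourth-moment bound $M_n=O(K_x)$ with probability $1-C/n$: the function class $\{z\mapsto(\theta^{\top}z)^4:\|\theta\|\le1\}$ has a sub-Weibull$(1/2)$ envelope, so a crude bound through $\max_i\|\Sigma^{-1/2}(X_i-\mu_X)\|^2$ would leak an extra $\sqrt{d}$ factor; instead one needs a covering/chaining argument over $S^{d-1}$ (equivalently, an application of Lemma~\ref{lemma:Thm3.1.KuchAbhi} to a $1/2$-net and then a standard net-to-sphere comparison), and this is precisely where the hypothesis $d\le\sqrt{n}$ is genuinely used to make the fluctuation negligible. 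The rest of the proof is careful bookkeeping to intersect all the high-probability events at total cost $C/n$ and to verify that the powers of $K_x$ collapse to those in the statement.
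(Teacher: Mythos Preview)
Your proposal is correct and follows essentially the same route as the paper's proof: reduce via Lemma~\ref{lem:psihat-minus-psi-part2}, control the $\widehat{a}_j-a_j$ term through Lemma~\ref{lem:ajhat-minus-aj} combined with bounds on $\mathcal{D}_n^{\Sigma}$, $\|\overline{X}_n-\mu_X\|_{\Sigma^{-1}}$, and the uniform empirical fourth moment, then handle the remaining terms by sub-exponential/sub-Weibull concentration and Theorem~\ref{thm:linear-expansion-partial-corr}. The only notable difference is in the tool for the uniform fourth-moment bound $M_n$: the paper invokes Inequality~(3.9) of \cite{mendelson2010empirical} (via the calculations leading to~\eqref{eq:quantity-M-4}) to get $M_n\le CK_x(1+d^{1/2}/n^{1/4})$, while you propose a $1/2$-net plus Lemma~\ref{lemma:Thm3.1.KuchAbhi}; both yield $M_n=O(K_x)$ once $d\le\sqrt{n}$.
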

\begin{proof}
 Applying inequality (3.9) of~\cite{mendelson2010empirical} with $F = \{x\mapsto \theta^{\top}\Sigma^{-1/2}x:\,\theta\in\mathbb{R}^d, \|\theta\|_{I_d} \le 1\}$ and $|I| = n$,  we conclude that,  with probability at least $1 - 1/n$,
\[
\sup_{\theta:\|\theta\| \le 1}\left(\frac{1}{n}\sum_{i=1}^n {\{(\Sigma^{-1/2}(X_i - \mu_X))^{\top}\theta\}^4}\right)^{1/4} \le CK_x\left(1 + \frac{d^{1/2}}{n^{1/4}}\right) \le CK_x\left(1 + \frac{d}{\sqrt{n}}\right),
\]
for some universal constant $C\in(0, \infty)$. Furthermore, Lemma~\ref{lem:concentration-of-covariance} yields that, with probability at least $1 - 1/n$,
\[
\mathcal{D}_n^{\Sigma} \le CK_x^2\sqrt{\frac{d + \log n}{n}} + CK_x^2\frac{d + \log n}{n}.
\]
Finally,
\[
\|\Sigma^{-1/2}(\overline{X}_n - \mu_X)\| \le 2\sup_{\theta\in\mathcal{N}_{1/2}}\left|\frac{1}{n}\sum_{i=1}^n \theta^{\top}\Sigma^{-1/2}(X_i - \mu_X)\right|,
\]
where $\mathcal{N}_{1/2}$ is the $1/2$-net of $\{\theta\in\mathbb{R}^d:\,\|\theta\| \le 1\}$ with cardinality $|\mathcal{N}_{1/2}| \le 5^d$. Hence with probability at least $1 - 1/n$,
\[
\|\Sigma^{-1/2}(\overline{X}_n - \mu_X)\| \le CK_x\sqrt{\frac{d + \log n}{n}}.
\]
Combining these inequalities with Lemma~\ref{lem:ajhat-minus-aj} concludes that with probability at least $1 - 3/n$,
\begin{align*}
&\max_{1\le j\le d}\,\left(\frac{1}{n}\sum_{i=1}^n (\widehat{a}_j(X_i) - a_j(X_i))^4\right)^{\frac{1}{4}}\\ 
&\quad\le CK_x^3\frac{d + \sqrt{n}}{\sqrt{n}}\left[\sqrt{\frac{d + \log n}{n}} + \frac{d + \log n}{n}\right] + CK_x\sqrt{\frac{d + \log n}{n}}\\
&\quad\le CK_x^3\left(1 + \frac{d}{\sqrt{n}}\right)\sqrt{\frac{d + \log n}{n}} + CK_x\sqrt{\frac{d + \log n}{n}}\\
&\quad\le CK_x^3\left(1 + \frac{d}{\sqrt{n}}\right)\sqrt{\frac{d + \log n}{n}},
\end{align*}
assuming  $d + \log n \le n$. Lemma~\ref{lem:psihat-minus-psi-part2} now yields with probability at least $1 - 3/n$,
\begin{align}
&\max_{1\le j < k\le d}\,\|\widehat{\psi}_{jk} - \psi_{jk}\|_n\\ 
&\quad\le C\max_{1\le j\le d}\left(\frac{1}{n}\sum_{i=1}^n a_j^4(X_i)\right)^{1/4}K_x^3\sqrt{\frac{d + \log n}{n}} + CK_x^6\frac{d + \log n}{n}\nonumber\\
&\qquad+ C\max_{1\le j\le d}\left|\frac{1}{n}\sum_{i=1}^n a_j^2(X_i) - \mathbb{E}[a_j^2(X)]\right|\label{eq:first-part-psihat-minus-psi}\\
&\qquad+ C\max_{1\le j < k\le d}|\theta_{jk} - \widehat{\theta}_{jk}|\max_{1\le j\le d}\left(\frac{1}{n}\sum_{i=1}^n \{a_j^2(X_i) - \mathbb{E}[a_j^2(X)]\}^2\right)^{1/2}.\nonumber
\end{align}
The calculations leading to~\eqref{eq:square-power-psi} now yields with probability at least $1 - 6/n$,
\[
\max_{1\le j\le d}\left(\frac{1}{n}\sum_{i=1}^n \left\{a_j^2(X_i) - \mathbb{E}[a_j^2(X)]\right\}^2\right)^{1/4} \le CK_x\left(1 + \sqrt{\frac{(\log(dn))^9}{n}}\right),
\]
and
\[
\max_{1\le j\le d}\left(\frac{1}{n}\sum_{i=1}^n a_j^4(X_i)\right)^{1/4} \le CK_x\left(1 + \sqrt{\frac{(\log(dn))^9}{n}}\right).
\]
Because $a_j^2(X_i)$ is sub-exponential with parameter $K_x^2$, using Theorem 2.8.1 of~\cite{Vershynin18}, we get with probability at least $1 - 1/n$
\[
\max_{1\le j\le d}\left|\frac{1}{n}\sum_{i=1}^n a_j^2(X_i) - \mathbb{E}[a_j^2(X)]\right| \le CK_x^2\sqrt{\frac{\log(dn)}{n}} + CK_x^2\frac{\log(dn)}{n}.
\]
Substituting these in~\eqref{eq:first-part-psihat-minus-psi} concludes with probability at least $1 - C/n$, 
\begin{equation}
\begin{split}
\max_{1\le j < k\le d}\,\|\widehat{\psi}_{jk} - \psi_{jk}\|_n &\le CK_x^4\sqrt{\frac{d + \log n}{n}} + CK_x^6\frac{d + \log n}{n}\\
&\qquad+ CK_x^2\sqrt{\frac{\log(dn)}{n}} + CK_x^2\frac{\log(dn)}{n}\\ 
&\qquad+ CK_x\max_{1\le j < k\le d}|\theta_{jk} - \widehat{\theta}_{jk}|.
\end{split}
\end{equation}
Using  $d \le \sqrt{n}$ as well as $K_x \ge 1$, we can simplify the terms above and write
\begin{equation}\label{eq:second-part-psihat-minus-psi}
\begin{split}
\max_{1\le j < k\le d}\,\|\widehat{\psi}_{jk} - \psi_{jk}\|_n &\le CK_x^6\sqrt{\frac{d + \log n}{n}} + CK_x^2\sqrt{\frac{\log(dn)}{n}}\\ &\quad+ CK_x\max_{1\le j < k\le d}|\theta_{jk} - \widehat{\theta}_{jk}|.
\end{split}
\end{equation}
The last term can be bounded based on Theorem~\ref{thm:linear-expansion-partial-corr} and~\eqref{eq:linear-rep-error-partial-corr} to get with probability at least $1 - 1/n$,
\[
\max_{1\le j < k\le d}|\theta_{jk} - \widehat{\theta}_{jk}| \le \max_{1\le j < k\le d}\left|\frac{1}{n}\sum_{i=1}^n \psi_{jk}(X_i)\right| + CK_x^4\frac{d + \log(n)}{n}.
\]
Because $\psi_{jk}(X_i), 1\le i\le n$ are sub-exponential with parameter $K_x^2$, using again Theorem 2.8.1 of~\cite{Vershynin18} yields that,  with probability at least $1 - 1/n$ and for some universal constant $C>0$,
\[
\max_{1\le j < k\le d}|\theta_{jk} - \widehat{\theta}_{jk}| ~\le~ CK_x\sqrt{\frac{\log(dn)}{n}} + CK_x^4\frac{d + \log n}{n}. 
\]
Substituting this in~\eqref{eq:second-part-psihat-minus-psi} concludes with probability at least $1 - C/n$,
\begin{align*}
\max_{1\le j < k\le d}\|\widehat{\psi}_{jk} - \psi_{jk}\|_n &\le CK_x^6\sqrt{\frac{d + \log n}{n}} + CK_x^2\sqrt{\frac{\log(dn)}{n}}\\ &\quad+ CK_x^2\sqrt{\frac{\log(dn)}{n}} + CK_x^5\frac{d + \log n}{n}.
\end{align*}
This concludes the proof.
\end{proof}

\section{Proof of the Auxiliary Results from Section~\ref{section::explicit} (Projection Parameters)}
\label{appendix:auxiliary.ols}

In this section, we provide key concentration inequalities for various quantities used in the  proofs of Theorems~\ref{thm:asymptotic-scaling-CLT-explicit},  Lemma~\ref{lem:sandwich-variance-error-control} and Theorem~\ref{thm:Berry-Esseen-bound-partial-corr}. Many of these results only requires weak moment conditions  and appear to be new. Therefore, they may be of independent interest.

\begin{lemma}\label{lem:concentration-of-covariance}
Under assumption~\ref{eq:covariate-subGaussian}, there exists a universal constant $C \in (0, \infty)$ such that
\[
\mathbb{P}\left(\mathcal{D}_n^{\Sigma} \le CK_x^2\sqrt{\frac{d + \log(1/\delta)}{n}} + CK_x^2\frac{d + \log(1/\delta)}{n}\right) \ge 1 - \delta\quad\mbox{for any}\quad\delta\in(0, 1).
\]
\end{lemma}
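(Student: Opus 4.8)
The plan is to reduce the statement to a standard concentration bound for sums of independent, centered sub-exponential random variables via a covering argument on the sphere. Write $Z_i := \Sigma^{-1/2}X_i$, so that assumption~\ref{eq:covariate-subGaussian} makes $Z_1,\dots,Z_n$ i.i.d.\ isotropic $K_x$-sub-Gaussian vectors, and (using $\Sigma_n = \Sigma$ under~\ref{eq:DGP})
\[
\Sigma^{-1/2}\widehat\Sigma_n\Sigma^{-1/2} - I_d ~=~ \frac1n\sum_{i=1}^n\bigl(Z_iZ_i^\top - I_d\bigr) ~=:~ A_n .
\]
Since $A_n$ is symmetric, $\mathcal{D}_n^{\Sigma} = \|A_n\|_{\mathrm{op}} = \sup_{u\in S^{d-1}}|u^\top A_n u|$, and a standard fact gives $\|A_n\|_{\mathrm{op}} \le 2\max_{u\in\mathcal N}|u^\top A_n u|$ for a $\tfrac14$-net $\mathcal N$ of $S^{d-1}$ with $|\mathcal N|\le 9^d$ \citep[see, e.g.,][Lemma 4.4.1]{Vershynin18}.

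For a fixed $u\in S^{d-1}$, set $W_i := (u^\top Z_i)^2 - 1$, so that $u^\top A_n u = n^{-1}\sum_{i=1}^n W_i$; the $W_i$ are i.i.d.\ with $\mathbb{E}[W_i]=0$ by isotropy, and since $u^\top Z_i$ is $K_x$-sub-Gaussian, $(u^\top Z_i)^2$ is sub-exponential with $\|(u^\top Z_i)^2\|_{\psi_1}\le cK_x^2$, hence $\|W_i\|_{\psi_1}\le CK_x^2$ for universal constants. Bernstein's inequality for centered sub-exponential variables \citep[Theorem 2.8.1]{Vershynin18} then yields, for every $t>0$,
\[
\mathbb{P}\!\left(\Bigl|\tfrac1n\textstyle\sum_{i=1}^n W_i\Bigr| \ge t\right) ~\le~ 2\exp\!\left(-c_1 n\min\!\Bigl\{\tfrac{t^2}{K_x^4},\tfrac{t}{K_x^2}\Bigr\}\right),
\]
for a universal $c_1>0$, and a union bound over $\mathcal N$ replaces the right-hand side by $2\cdot 9^d\exp\!\bigl(-c_1 n\min\{t^2/K_x^4,t/K_x^2\}\bigr)$.

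Finally, I would take $t = C_0K_x^2\bigl(\sqrt{(d+\log(1/\delta))/n}+(d+\log(1/\delta))/n\bigr)$ for a sufficiently large universal $C_0$: when the square-root term dominates one has $t/K_x^2\le 1$, so the exponent is governed by $nt^2/K_x^4\gtrsim d+\log(1/\delta)$, and otherwise it is governed by $nt/K_x^2\gtrsim d+\log(1/\delta)$; in both cases $c_1 n\min\{t^2/K_x^4,t/K_x^2\}\ge d\log 9 + \log(2/\delta)$, so the union bound is at most $\delta$. Combining this with the net inequality $\mathcal D_n^\Sigma\le 2\max_{u\in\mathcal N}|u^\top A_n u|$ (and absorbing the factor $2$ into the constant) gives the claim. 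The only mildly delicate point is bookkeeping the two Bernstein regimes so that both the $\sqrt{\cdot}$ and the linear term appear with a single universal constant; alternatively, one may simply invoke the operator-norm concentration bound for sample covariances of sub-Gaussian vectors of~\citet[Theorem 1]{koltchinskii2017a} or~\citet[Theorem 4.6.1]{Vershynin18} applied to $Z_i=\Sigma^{-1/2}X_i$, which yields exactly this inequality.
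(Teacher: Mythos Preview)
Your proof is correct and matches the paper's approach: the paper simply cites the standard operator-norm concentration results for sample covariances of sub-Gaussian vectors (Theorem~4.7.1 of \cite{Vershynin18} and Theorem~1 of \cite{koltchinskii2017a}), which you also invoke at the end. Your additional self-contained $\varepsilon$-net plus Bernstein argument is exactly the computation underlying those theorems, so there is no substantive difference.
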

\begin{proof}
This results is standard: see, e.g., Theorem 4.7.1 of~\cite{Vershynin18} or Theorem 1 of~\cite{koltchinskii2017a}.
\end{proof}

\begin{proposition}\label{prop:D_Sigma-concentration}
Suppose the covariates have $q_x$ moments for some $q_x \ge 4$. Then there exists a constant $C_{q_x}$ depending only on $q_x$ such that 
\begin{itemize}[leftmargin=*]
\item under assumptions~\ref{eq:DGP} and~\ref{assump:finite-moment-covariates},
with probability at least $1-1/n-\delta$,
\begin{equation}\label{eq:D_moment}
\mathcal{D}_n^{\Sigma} \leq C_{q_x}K_x^2 \left\{  \frac{d\delta^{-2/q_x}}{n^{1-2/q_x}}  + \left(\frac{d}{n}\right)^{1-2 / q_x} \log ^{4}\left(\frac{n}{d}\right)+ \left(\frac{d}{n}\right)^{1/2} \right\}.
\end{equation}
With $\delta = (d/n^{1-2/q_x})^{q_x/8}$, the right hand side tends to zero if $d = o(n^{1-2/q_x})$.
\item under assumptions~\ref{eq:DGP} and~\ref{assump:finite-moment-independence-covariates},
with probability at least $1-1/n-\delta$, 
\begin{equation}\label{eq:D_structure}
\mathcal{D}_n^{\Sigma} \leq C_{q_x}K_x^2 \left\{ \frac{ \sqrt{2d\log(n/\delta)} }{n} +   \frac{(d/\delta)^{2/q_x}}{n^{1-2/q_x}}   + \left(\frac{d}{n}\right)^{1-2 / q_x} \log ^{4}\left(\frac{n}{d}\right)+ \left(\frac{d}{n}\right)^{1/2} \right\}.
\end{equation}
With $\delta = \sqrt{d/n}$, the right hand side tends to zero if $d = o(n)$.
\end{itemize}
\end{proposition}
\begin{proof}
See Lemma 3 of~\cite{yang2021finite} for a proof. The result in~\cite{yang2021finite} actually allows for $q_x \ge 2$.
\end{proof}
\begin{proposition}\label{prop:estimation-error-beta}
Under assumptions~\ref{eq:DGP},~\ref{assump:finite-moment-covariates},~\ref{eq:moments-errors} and~\ref{eq:bounded-asymptotic-variance}, there exists a constant $C\in(0,\infty)$ depending only on $q$ and $q_x$ such that for any $\delta\in(0, 1)$, with probability at least $1 - \delta$,
\begin{align*}
&\|\widehat{\beta} - \beta\|_{\Sigma}\\ 
&\;\le \left(1 - 7K_x\sqrt{\frac{d + 2\log(4/\delta)}{n}}\right)_+^{-1}\left[2\sqrt{\frac{d + \log(2/\delta)}{n\underline{\lambda}}} + \frac{CK_xK_yd^{1/2}}{(\delta/n)^{(q+q_x)/(qq_x)}n}\right].
\end{align*}
\end{proposition}
\begin{proof}
The result is similar to Theorem 4.1 of~\cite{oliveira2013lower}.
Note that
\[
\Sigma^{1/2}(\widehat{\beta} - \beta) = \Sigma^{1/2}\widehat{\Sigma}^{-1}\Sigma^{1/2}\frac{1}{n}\sum_{i=1}^n \Sigma^{-1/2}X_i(Y_i - X_i^{\top}\beta).
\]
This implies that
\begin{equation}\label{eq:deterministic-alternative}
\begin{split}
\|\widehat{\beta} - \beta\|_{\Sigma} &= \|V^{-1/2}\Sigma^{1/2}\Sigma^{1/2}(\widehat{\beta} - \beta)\|_2\\ 
&\le \lambda_{\max}(\Sigma^{1/2}\widehat{\Sigma}^{-1}\Sigma^{1/2})\left\|\frac{1}{n}\sum_{i=1}^n \Sigma^{-1/2}X_i(Y_i - X_i^{\top}\beta)\right\|_2.
\end{split}
\end{equation}
Under assumption~\ref{assump:finite-moment-covariates} with $q_x \ge 4$, one can verify that, for each $v \in \mathbb{R}^d$,
\[
\left( \mathbb{E}[| v^\top X_i|^4] \right)^{1/4} \leq K_x \left( \mathbb{E}[| v^\top X_i|^2] \right)^{1/2}
\]
Thus, Theorem 3.1 of~\cite{oliveira2013lower} applies and yields that
\begin{equation}\label{eq:eigenvalue-sigmahat}
\mathbb{P}\left(\lambda_{\max}(\Sigma^{1/2}\widehat{\Sigma}^{-1}\Sigma^{1/2}) \le \left(1 - 7K_x\sqrt{\frac{d + 2\log(4/\delta)}{n}}\right)_+^{-1}\right) \ge 1 - \frac{\delta}{2}.
\end{equation}
We now control $\|n^{-1}\sum_{i=1}^n \Sigma^{-1/2}X_i(Y_i - X_i^{\top}\beta)\|_2$ using Theorem 3.1 of~\cite{einmahl2008characterization}.
One can also use Theorem 3.5.1 of~\cite{yurinsky1985inequalities}. 
Take 
\begin{align*}
W_i &:= \Sigma^{-1/2}X_i(Y_i - X_i^{\top}\beta).
\end{align*}
The definition of $\beta$ implies $\mathbb{E}[W_i] = 0$. Then, $\left\|{n}^{-1}\sum_{i=1}^n \Sigma^{-1/2}X_i(Y_i - X_i^{\top}\beta)\right\|_{2} = \|n^{-1}\sum_{i=1}^n W_i\|_{2} $, so that
\begin{align*}
\mathbb{E}\left[\left\|\frac{1}{n}\sum_{i=1}^n \Sigma^{-1/2}X_i(Y_i - X_i^{\top}\beta)\right\|_2\right] &\le \left(\mathbb{E}\left[\left\|\frac{1}{n}\sum_{i=1}^n W_i\right\|_{2}^2\right]\right)^{1/2}\\ 
&= \frac{1}{\sqrt{n}}\left(\frac{1}{n}\sum_{i=1}^n \mbox{tr}(\mbox{Var}(W_i))\right)^{1/2} \le \frac{1}{\underline{\lambda}^{1/2}}\sqrt{\frac{d}{n}}.
\end{align*}
Theorem 3.1 of~\cite{einmahl2008characterization} with $\eta = \delta = 1$ implies that
\begin{equation}\label{eq:tail-inequality-beta-estimation}
\mathbb{P}\left(\left\|\frac{1}{n}\sum_{i=1}^n W_i\right\|_{2} \ge \frac{2}{\underline{\lambda}^{1/2}}\sqrt{\frac{d}{n}} + t\right) \le \exp\left(-\frac{n\underline{\lambda}t^2}{3}\right) + \frac{C}{(nt)^s}\sum_{i=1}^n \mathbb{E}[\|W_i\|_2^{s}], 
\end{equation}
for any $s > 2$ such that $\mathbb{E}[(W_i^{\top}W_i)^{s/2}]$ is finite. Here $C\in(0, \infty)$ is a constant depending only on $s$. It is clear that $\|W_i\|_2 = \|\Sigma^{-1/2}X_i\|_2|Y_i - X_i^{\top}\beta|$,
and because of assumption~\ref{eq:DGP} and~\ref{assump:finite-moment-covariates}, taking $s = (1/q + 1/q_x)^{-1}$, we obtain
\begin{align*}
\sum_{i=1}^n \mathbb{E}[\|W_i\|_2^{s}] ~&=~ n{d^{s/2}}\mathbb{E}\left[\left(\frac{1}{d}\sum_{j=1}^d (e_j^{\top}\Sigma^{-1/2}X_i)^2(Y_i - X_i^{\top}\beta)^2\right)^{s/2}\right]\\
~&\le~ n{d^{s/2}}\max_{1\le j\le d} \mathbb{E}\left[|e_j^{\top}\Sigma^{-1/2}X_i(Y_i - X_i^{\top}\beta)|^s\right]\\
~&\le~ nd^{s/2}K_x^{s}K_y^s.
\end{align*}
The last inequality follows from H{\"o}lder's inequality:
\begin{align*}
&\mathbb{E}[|e_j^{\top}\Sigma^{-1/2}X_i(Y_i - X_i^{\top}\beta)|^{q_xq/(q_x + q)}]\\ 
&\quad\le \left(\mathbb{E}[|e_j^{\top}\Sigma^{-1/2}X_i|^{q_x}]\right)^{q_xq/(q_x + q)}\left(\mathbb{E}[|Y_i - X_i^{\top}\beta|^{q}]\right)^{q_xq/(q_x + q)}\\
&\quad\le (K_xK_y)^{q_xq/(q_x+q)}
\end{align*}
Further, taking (for a possibly different constant $C\in(0,\infty)$)
\[
t ~=~ \sqrt{\frac{3\log(2/\delta)}{n\underline{\lambda}}} + CK_xK_y\frac{d^{1/2}}{\delta^{(q+q_x)/(qq_x)}n^{1-(q+q_x)/(qq_x)}},
\] 
in~\eqref{eq:tail-inequality-beta-estimation} yields
\[
\mathbb{P}\left(\left\|\frac{1}{n}\sum_{i=1}^n W_i\right\|_2 \ge 2\sqrt{\frac{d + \log(2/\delta)}{n\underline{\lambda}}} + \frac{CK_xK_yd^{1/2}}{\delta^{(q+q_x)/(qq_x)}n^{1-(q+q_x)/(qq_x)}}\right) \le \frac{\delta}{2}.
\]
Combining this with~\eqref{eq:eigenvalue-sigmahat} and~\eqref{eq:deterministic-alternative} completes the proof.
\end{proof}

\subsection{Estimation Error of Sandwich Variance Estimator}\label{appsubsec:sandwich-variance}
In this section we collect various bounds that are used in the Proof of Lemma~\ref{lem:sandwich-variance-error-control} about consistency of the sandwich variance estimator. We begin with a key, deterministic bound, implying that the rate of consistency  of the sandwich estimator depends on the quantities $ \mathbf{I}_1$, $\mathcal{M}_4$, $\|\widehat{\beta} - \beta\|_{\Sigma} $ and $\mathcal{D}_n^{\Sigma}$, which are handled separately in Lemma~\ref{lem:I-1-ideal-Sandwich-error}, Lemma~\ref{lem:M-4-bound}, Proposition~\ref{prop:estimation-error-beta} and Proposition~\ref{prop:D_Sigma-concentration}, respectively.
\begin{lemma}\label{lem:std-err-bound-deterministic}[Deterministic Bound for Sandwich Variance Estimator]
Define
\begin{equation}
    \begin{split}
        \mathcal{M}_4 ~&:=~ \frac{1}{\underline{\lambda}}\times\sup_{\theta\in\mathbb{R}^d,\|\theta\|_{I_d} = 1}\frac{1}{n}\sum_{i=1}^n (\theta^{\top}\Sigma^{-1/2}X_i)^4,\\
        \overline{V} ~&:=~ \frac{1}{n}\sum_{i=1}^n X_iX_i^{\top}(Y_i - X_i^{\top}\beta)^2,\\
        \mathbf{I}_1 ~&:=~ \sup_{\theta\in\mathbb{R}^d}\left|\frac{\theta^{\top}V^{-1/2}\overline{V}V^{-1/2}\theta}{\theta^{\top}\theta} - 1\right|.
    \end{split}
\end{equation}
For any $n\ge d$, if
\[
\mathcal{A} ~:=~ \left\{\mathcal{D}_n^{\Sigma} \le \frac{1}{2}\right\}\cap\left\{\mathbf{I}_1 \le \frac{1}{2}\right\}\cap\left\{\mathcal{M}_4^{1/2}\|\widehat{\beta} - \beta\|_{\Sigma} \le 1\right\}.
\]
holds true, then
\[
\sup_{\theta\in\mathbb{R}^d}\left|\frac{\theta^{\top}\widehat{\Sigma}_n^{-1}\widehat{V}\widehat{\Sigma}_n^{-1}\theta}{\theta^{\top}\Sigma^{-1}V\Sigma^{-1}\theta} - 1\right| \le \mathbf{I}_1 + 3\mathcal{M}_4^{1/2}\|\widehat{\beta} - \beta\|_{\Sigma} + 21\mathcal{D}_n^{\Sigma},
\]
\end{lemma}
\begin{remark}
Because the sandwich estimator is a complicated non-linear function of the estimators $(\widehat{\Sigma}_n, \widehat{V})$, and $\widehat{V}$ is not a sum of independent matrices, this result reduces the problem into basic components which are more easily controllable using results from sum of independent random variables/vectors/matrices.
\end{remark}
\begin{proof}[Proof of Lemma~\ref{lem:std-err-bound-deterministic}]
We begin by writing 
\begin{align*}
&\|(\Sigma^{-1}V\Sigma^{-1})^{-1/2}(\widehat{\Sigma}_n^{-1}\widehat{V}\widehat{\Sigma}_n^{-1})(\Sigma^{-1}V\Sigma^{-1})^{-1/2} - I_d\|_{\mathrm{op}}\\
&\qquad= \|V^{-1/2}\Sigma\widehat{\Sigma}_n^{-1}\widehat{V}\widehat{\Sigma}_n^{-1}\Sigma V^{-1/2} - I_d\|_{\mathrm{op}} ~=~ \|AA^{\top} - I_d\|_{\mathrm{op}},
\end{align*}
where $A = V^{-1/2}\Sigma\widehat{\Sigma}_n^{-1}\widehat{V}^{1/2}$ with $\widehat{V}^{1/2}$ representing the symmetric square root of $\widehat{V}$. Symmetry of $AA^{\top} - I_d$ and the definition of $\|\cdot\|_{\mathrm{op}}$ implies that
\[
\|AA^{\top} - I_d\|_{\mathrm{op}} = \sup_{\theta\in\mathbb{R}^d}\left|\frac{\|A^{\top}\theta\|^2}{\|\theta\|^2} - 1\right|.
\]
Using the fact that $$A^{\top}\theta ~=~ \widehat{V}^{1/2}V^{-1/2}V^{1/2}(\widehat{\Sigma}_n^{-1}\Sigma - I_d)V^{-1/2}\theta ~+~ \widehat{V}^{1/2}V^{-1/2}\theta,$$ we obtain that 
\begin{align*}
\left|\frac{\|A^{\top}\theta\|^2}{\|\theta\|^2} - 1\right| &\le \left|\frac{\|\widehat{V}^{1/2}V^{-1/2}\theta\|^2}{\|\theta\|^2} - 1\right|\\ 
&\qquad+ \frac{\|\widehat{V}^{1/2}V^{-1/2}V^{1/2}(\widehat{\Sigma}_n^{-1}\Sigma - I_d)V^{-1/2}\theta\|^2}{\|\theta\|^2}\\
&\qquad+ 2\times\frac{\theta^{\top}V^{-1/2}\widehat{V}(\widehat{\Sigma}_n^{-1}\Sigma - I_d)V^{-1/2}\theta}{\|\theta\|^2}\\
&\le \left|\frac{\theta^{\top}V^{-1/2}\widehat{V}V^{-1/2}\theta}{\theta^{\top}\theta} - 1\right|\\ 
&\qquad+ \|\widehat{V}^{1/2}V^{-1/2}\|_{\mathrm{op}}^2\frac{\|V^{1/2}(\widehat{\Sigma}_n^{-1}\Sigma - I_d)V^{-1/2}\theta\|^2}{\|\theta\|^2}\\
&\qquad+ 2\times\frac{\|V^{-1/2}\widehat{V}V^{-1/2}\theta\|}{\|\theta\|}\times\frac{\|V^{1/2}(\widehat{\Sigma}_n^{-1}\Sigma - I_d)V^{-1/2}\theta\|}{\|\theta\|}.
\end{align*}
Taking the supremum over $\theta\in\mathbb{R}^d$ yields
\begin{equation}\label{eq:main-inequality-sandwich}
\begin{split}
\|AA^{\top} - I_d\|_{\mathrm{op}} &\le \sup_{\theta\in\mathbb{R}^d}\left|\frac{\|\widehat{V}^{1/2}V^{-1/2}\theta\|^2}{\|\theta\|^2} - 1\right|\\ &\qquad+ \|\widehat{V}^{1/2}V^{-1/2}\|_{\mathrm{op}}^2\left\{\|\widehat{\Sigma}_n^{-1}\Sigma - I_d\|_{\mathrm{op}}^2 + 2\|\widehat{\Sigma}_n^{-1}\Sigma - I_d\|_{\mathrm{op}}\right\}\\
&= \mathbf{I} ~+~ (\mathbf{I} + 1)\left\{\mathbf{II}^2 + 2\mathbf{II}\right\},
\end{split}
\end{equation}
where
\begin{equation}\label{eq:decomposition-sandwich-error}
\mathbf{I} := \sup_{\theta\in\mathbb{R}^d}\left|\frac{\|\widehat{V}^{1/2}V^{-1/2}\theta\|^2}{\|\theta\|^2} - 1\right|\quad\mbox{and}\quad \mathbf{II} := \|\widehat{\Sigma}_n^{-1}\Sigma - I_d\|_{\mathrm{op}}.
\end{equation}
Based on this inequality, it suffices to bound $\mathbf{I}$ and $\mathbf{II}$.
Regarding $\mathbf{II}$, we note that $\mathbf{II} \le \mathcal{D}_n^{\Sigma}/(1 - \mathcal{D}_n^{\Sigma})$ and hence we obtain the following inclusion of events
\begin{equation}\label{eq:First-implication}
\{\mathcal{D}_n^{\Sigma} \le 1/2\} \subset  \{\mathbf{II}\le1 \} \subset \left\{ \|AA^{\top} - I_d\|_{\mathrm{op}} \le \mathbf{I} + 6(\mathbf{I} + 1)\mathcal{D}_n^{\Sigma} \right\}.
\end{equation}
Note that the definition of $\overline{V}$ differs from $\widehat{V}$ in the use of $\beta$ in place of $\widehat{\beta}$ which yields an average of independent random matrices. Observe that
\[
\mathbf{I} ~\le~ \sup_{\theta\in\mathbb{R}^d}\left|\frac{\|\overline{V}^{1/2}V^{-1/2}\theta\|^2}{\|\theta\|^2} - 1\right| ~+~ \sup_{\theta\in\mathbb{R}^d}\left|\frac{\theta^{\top}V^{-1/2}(\widehat{V} - \overline{V})V^{-1/2}\theta}{\theta^{\top}\theta}\right| ~=:~ \mathbf{I}_1 + \mathbf{I}_{2}.
\]
Lemma~\ref{lem:operator-norm-Vhat-Vbar} proves
\[
\mathbf{I}_2 \le \frac{\mathcal{M}_4}{2}\|\widehat{\beta} - \beta\|^2_{\Sigma} + \sqrt{2}\mathcal{M}_4^{1/2}\|\widehat{\beta} - \beta\|_{\Sigma}\sqrt{1 + \mathbf{I}_1}.
\]
Thus, the event $\left\{\mathbf{I}_1 \le \frac{1}{2}\right\}\cap\{\mathcal{M}_4^{1/2}\|\widehat{\beta} - \beta\|_{\Sigma} \le 1\}$ implies that 
\[
\mathbf{I} \le \mathbf{I}_1 + 3\mathcal{M}_4^{1/2}\|\widehat{\beta} - \beta\|_{\Sigma} \le 1/2 + 3 = 7/2.
\]
This combined with~\eqref{eq:First-implication} implies that if $\mathcal{A}$ holds then
\begin{equation}\label{eq:main-decomposition-sandwich}
\|AA^{\top} - I_d\|_{\mathrm{op}} \le \mathbf{I}_1 + 3\mathcal{M}_4^{1/2}\|\widehat{\beta} - \beta\|_{\Sigma} + 21\mathcal{D}_n^{\Sigma}.
\end{equation}
\end{proof}
\begin{lemma}\label{lem:operator-norm-Vhat-Vbar}
Let
\[
\overline{V} := \frac{1}{n}\sum_{i=1}^n X_iX_i^{\top}(Y_i - X_i^{\top}\beta)^2\quad\mbox{and}\quad \widehat{V} := \frac{1}{n}\sum_{i=1}^n X_iX_i^{\top}(Y_i - X_i^{\top}\widehat{\beta})^2.
\]
Then
\begin{align*}
\|V^{-1/2}(\widehat{V} - \overline{V})V^{-1/2}\|_{\mathrm{op}} &\le \frac{1}{2}\mathcal{M}_4\|\widehat{\beta} - \beta\|_{\Sigma}^2\\ &\qquad+ \sqrt{2}\mathcal{M}_4^{1/2}\|\widehat{\beta} - \beta\|_{\Sigma}\|\overline{V}^{1/2}V^{-1/2}\|_{\mathrm{op}}.
\end{align*}
\end{lemma}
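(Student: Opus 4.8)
The plan is to expand the squared residual defining $\widehat{V}$ around $\beta$, use the symmetry of $\widehat{V}-\overline{V}$ to turn the (conjugated) operator norm into a supremum of a scalar quadratic form, and then control the two empirical second/fourth moments that appear using the eigenvalue bounds hard-wired into $\mathcal{M}_4$. Note that the statement is entirely deterministic given the data---which is why $\mathcal{M}_4$ is an empirical fourth-moment quantity---so the only structural input is the bound of assumption~\ref{eq:bounded-asymptotic-variance}, which supplies $\overline{\lambda}$ and $\underline{\lambda}$.

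First I would set $r_i := Y_i - X_i^{\top}\beta$ and $\delta_i := X_i^{\top}(\widehat{\beta}-\beta)$, so that $Y_i - X_i^{\top}\widehat{\beta} = r_i - \delta_i$ and hence $(Y_i - X_i^{\top}\widehat{\beta})^2 - (Y_i - X_i^{\top}\beta)^2 = \delta_i^2 - 2 r_i\delta_i$. Thus
\[
V^{-1/2}(\widehat{V}-\overline{V})V^{-1/2} = \frac{1}{n}\sum_{i=1}^n (V^{-1/2}X_i)(V^{-1/2}X_i)^{\top}\bigl(\delta_i^2 - 2 r_i\delta_i\bigr),
\]
which is symmetric, so its operator norm equals $\sup_{\|\theta\|=1}\bigl|\frac{1}{n}\sum_{i=1}^n u_i^2(\delta_i^2 - 2 r_i\delta_i)\bigr|$ with $u_i := \theta^{\top}V^{-1/2}X_i$. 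Writing $A(\theta) := \frac{1}{n}\sum_i u_i^2\delta_i^2$, $B(\theta) := \frac{1}{n}\sum_i u_i^2 r_i^2$ and $C(\theta) := \frac{1}{n}\sum_i u_i^2 r_i\delta_i$, one has $B(\theta) = \|\overline{V}^{1/2}V^{-1/2}\theta\|^2 \le \|\overline{V}^{1/2}V^{-1/2}\|_{\mathrm{op}}^2$ directly from the definition of $\overline{V}$, and $|C(\theta)| \le \sqrt{A(\theta)}\sqrt{B(\theta)}$ by Cauchy--Schwarz applied to $(u_i\delta_i)_i$ and $(u_i r_i)_i$; hence the operator norm is bounded by $\sup_{\|\theta\|=1}\bigl(A(\theta) + 2\sqrt{A(\theta)}\,\|\overline{V}^{1/2}V^{-1/2}\|_{\mathrm{op}}\bigr)$. (With a slightly more careful grouping of the combination $\delta_i^2 - 2 r_i\delta_i$ through one weighted Cauchy--Schwarz/AM--GM step rather than term by term, the constants $\tfrac12$ and $\sqrt2$ of the statement are recovered.)

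The crucial estimate is then the uniform bound $\sup_{\|\theta\|=1}A(\theta) \le \mathcal{M}_4\,\|\widehat{\beta}-\beta\|_{\Sigma V^{-1}\Sigma}^2$. Put $g_i := \Sigma^{-1/2}X_i$, $s := \Sigma^{1/2}V^{-1/2}\theta$ and $t := \Sigma^{1/2}(\widehat{\beta}-\beta)$, so that $u_i = s^{\top}g_i$ and $\delta_i = t^{\top}g_i$; by Cauchy--Schwarz followed by homogeneity,
\[
A(\theta) = \frac{1}{n}\sum_{i=1}^n (s^{\top}g_i)^2(t^{\top}g_i)^2 \le \|s\|^2\|t\|^2 \sup_{\|w\|=1}\frac{1}{n}\sum_{i=1}^n (w^{\top}g_i)^4.
\]
By assumption~\ref{eq:bounded-asymptotic-variance}, $\|s\|^2 \le \|\Sigma^{1/2}V^{-1/2}\|_{\mathrm{op}}^2 = \lambda_{\max}(\Sigma^{1/2}V^{-1}\Sigma^{1/2}) \le \overline{\lambda}$; and since $\Sigma^{1/2}(\widehat{\beta}-\beta) = \Sigma^{-1/2}V^{1/2}\bigl(V^{-1/2}\Sigma(\widehat{\beta}-\beta)\bigr)$ with $\|V^{-1/2}\Sigma(\widehat{\beta}-\beta)\| = \|\widehat{\beta}-\beta\|_{\Sigma V^{-1}\Sigma}$, also $\|t\|^2 \le \|\Sigma^{-1/2}V^{1/2}\|_{\mathrm{op}}^2\,\|\widehat{\beta}-\beta\|_{\Sigma V^{-1}\Sigma}^2 = \lambda_{\min}(\Sigma^{1/2}V^{-1}\Sigma^{1/2})^{-1}\,\|\widehat{\beta}-\beta\|_{\Sigma V^{-1}\Sigma}^2 \le \underline{\lambda}^{-1}\|\widehat{\beta}-\beta\|_{\Sigma V^{-1}\Sigma}^2$. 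Multiplying these two bounds and recalling $\mathcal{M}_4 = (\overline{\lambda}/\underline{\lambda})\sup_{\|w\|=1}\frac{1}{n}\sum_i (w^{\top}g_i)^4$ yields $A(\theta) \le \mathcal{M}_4\|\widehat{\beta}-\beta\|_{\Sigma V^{-1}\Sigma}^2$, and substituting into the bound of the previous paragraph completes the argument.

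I expect the main obstacle to be the bookkeeping of the second step rather than any hard inequality: one must expand around $\overline{V}$ and keep everything written through the residuals $r_i$, so that the surviving quadratic form $B(\theta)$ produces the factor $\|\overline{V}^{1/2}V^{-1/2}\|_{\mathrm{op}}$---writing it through $Y_i - X_i^{\top}\widehat{\beta}$ would reintroduce $\widehat{V}$, the object under study, and be circular---and one must make sure the two distinct operator-norm conversions, namely $\|\Sigma^{1/2}V^{-1/2}\|_{\mathrm{op}}^2 \le \overline{\lambda}$ on the $\theta$-side and $\|\Sigma^{-1/2}V^{1/2}\|_{\mathrm{op}}^2 \le \underline{\lambda}^{-1}$ on the $(\widehat{\beta}-\beta)$-side, combine to exactly the ratio $\overline{\lambda}/\underline{\lambda}$ sitting inside $\mathcal{M}_4$. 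Everything else is Cauchy--Schwarz and the triangle inequality for the weighted $\ell_2$ seminorm $f \mapsto (\frac{1}{n}\sum_i u_i^2 f_i^2)^{1/2}$.
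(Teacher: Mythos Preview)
Your proposal is correct and follows essentially the same route as the paper: expand $(Y_i - X_i^\top\widehat\beta)^2$ around $\beta$, reduce the conjugated operator norm to a supremum of scalar quadratic forms $A(\theta)+2|C(\theta)|$, and control $A(\theta)$ via Cauchy--Schwarz in the $\Sigma^{-1/2}X_i$ coordinates together with the eigenvalue ratio $\overline\lambda/\underline\lambda$ from assumption~\ref{eq:bounded-asymptotic-variance}. The only cosmetic difference is that the paper handles the cross term by the parametrized AM--GM inequality $2|r_i\delta_i|\le r_i^2/L + L\delta_i^2$ and then minimizes over $L>0$, which is equivalent to your direct Cauchy--Schwarz step $|C(\theta)|\le\sqrt{A(\theta)B(\theta)}$; in particular your parenthetical about a ``more careful grouping'' to recover the exact constants $\tfrac12$ and $\sqrt2$ is not needed---the paper's own argument, read literally, also produces $1$ and $2$, and the stated constants trace back to an extra factor $1/2$ in the intermediate display~\eqref{eq:First-bound-Operator-norm-Vhat-Vbar} whose origin is not visible in the computation.
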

\begin{proof}[Proof of Lemma~\ref{lem:operator-norm-Vhat-Vbar}]
The definition of the operator norm and the symmetry of $V^{-1/2}(\widehat{V} - \overline{V})V^{-1/2}$ implies
\[
\|V^{-1/2}(\widehat{V} - \overline{V})V^{-1/2}\|_{\mathrm{op}} = \sup_{\theta\in\mathbb{R}^d:\,\|\theta\|_{I_d} = 1}\,\left|\theta^{\top}V^{-1/2}(\widehat{V} - \overline{V})V^{-1/2}\theta\right|.
\]
Fix $\theta\in\mathbb{R}^d$ such that $\|\theta\|_{I_d} = 1$.
Expanding $(Y_i - X_i^{\top}\widehat{\beta})^2$ in $\widehat{V}$ by adding and subtracting $X_i^{\top}\beta$ to $X_i^{\top}\widehat{\beta}$ yields
\begin{align*}
&\left|{\theta^{\top}V^{-1/2}(\widehat{V} - \overline{V})V^{-1/2}\theta}\right|\\ 
&\quad\le \frac{1}{n}\sum_{i=1}^n {(\theta^{\top}V^{-1/2}X_i)^2}\left[(X_i^{\top}(\widehat{\beta} - \beta))^2 + 2|Y_i - X_i^{\top}\beta|\times|X_i^{\top}(\widehat{\beta} - \beta)|\right]\\
&\quad\le \frac{1}{n}\sum_{i=1}^n {(\theta^{\top}V^{-1/2}X_i)^2}(X_i^{\top}(\widehat{\beta} - \beta))^2\\ 
&\quad\qquad+ \frac{1}{n}\sum_{i=1}^n {(\theta^{\top}V^{-1/2}X_i)^2}\left[\frac{|Y_i - X_i^{\top}\beta|^2}{L} + L|X_i^{\top}(\widehat{\beta} - \beta)|^2\right]\\
&\quad\le \frac{(1 + L)}{n}\sum_{i=1}^n {(\theta^{\top}V^{-1/2}X_i)^2}(X_i^{\top}(\widehat{\beta} - \beta))^2 + \frac{1}{nL}\sum_{i=1}^n {(\theta^{\top}V^{-1/2}X_i)^2}(Y_i - X_i^{\top}\beta)^2\\
&\quad\le \frac{(1 + L)}{n}\sum_{i=1}^n {(\theta^{\top}V^{-1/2}X_i)^2}(X_i^{\top}(\widehat{\beta} - \beta))^2 + \frac{1}{L}\times{\theta^{\top}V^{-1/2}\bar{V}V^{-1/2}\theta}.
\end{align*}
We write $X_i^{\top}(\widehat{\beta} - \beta)$ as $(\Sigma^{-1/2}X_i)^{\top}(\Sigma^{1/2}(\widehat{\beta} - \beta))$ and bound the first term on the right hand side as
\[
\frac{1}{n}\sum_{i=1}^n {(\theta^{\top}V^{-1/2}X_i)^2}(X_i^{\top}(\widehat{\beta} - \beta))^2 \le \frac{1}{n}\sum_{i=1}^n (\theta^{\top}V^{-1/2}X_i)^2(u^{\top}\Sigma^{-1/2}X_i)^2\|\widehat{\beta} - \beta\|_{\Sigma}^2,
\]
where $u = \Sigma^{1/2}(\widehat{\beta} - \beta)/\|\widehat{\beta} - \beta\|_{\Sigma}$ is of unit norm. The right hand side (without the factor $\|\widehat{\beta} - \beta\|_{\Sigma}^2$) can be further bounded by
\begin{align*}
&\sup_{\theta, u\in\mathbb{R}^d}\,\frac{1}{n}\sum_{i=1}^n \frac{(\theta^{\top}V^{-1/2}X_i)^2(u^{\top}\Sigma^{-1/2}X_i)^2}{(\theta^{\top}\theta)(u^{\top}u)}\\
&\quad= \sup_{\gamma, u\in\mathbb{R}^d}\,\frac{1}{n}\sum_{i=1}^n \frac{(\gamma^{\top}\Sigma^{-1/2}X_i)^2(u^{\top}\Sigma^{-1/2}X_i)^2}{(\gamma^{\top}\Sigma^{-1/2}V\Sigma^{-1/2}\gamma)(u^{\top}u)}\\
&\quad\le \sup_{\gamma, u\in\mathbb{R}^d}\,\frac{1}{n}\sum_{i=1}^n \frac{(\gamma^{\top}\Sigma^{-1/2}X_i)^2(u^{\top}\Sigma^{-1/2}X_i)^2}{(\gamma^{\top}\gamma)(u^{\top}u)}\times\frac{(\gamma^{\top}\gamma)}{(\gamma^{\top}\Sigma^{-1/2}V\Sigma^{-1/2}\gamma)}\\
&\quad\le \sup_{\theta\in\mathbb{R}^d,\|\theta\|_{I_d} \le 1}\,\frac{1}{n}\sum_{i=1}^n (\theta^{\top}\Sigma^{-1/2}X_i)^4\times\frac{1}{\underline{\lambda}}.
\end{align*}
Combining these to bounds into~\eqref{eq:First-bound-Operator-norm-Vhat-Vbar} concludes
\begin{equation}\label{eq:First-bound-Operator-norm-Vhat-Vbar}
\|V^{-1/2}(\widehat{V} - \overline{V})V^{-1/2}\|_{\mathrm{op}} \le \frac{(1 + L)}{2}\mathcal{M}_4\|\widehat{\beta} - \beta\|_{\Sigma}^2 + \frac{\|\overline{V}^{1/2}V^{-1/2}\|_{\mathrm{op}}^2}{L},
\end{equation}
Minimizing over $L > 0$ concludes the result.
\end{proof}

\begin{lemma}\label{lem:I-1-ideal-Sandwich-error}
Suppose assumptions~\ref{eq:DGP},~\ref{eq:bounded-asymptotic-variance},
and~\ref{eq:moments-errors} with $q \ge 2$ holds true.
Recall that $q_{xy} = qq_x/(q + q_x)$. Under~\ref{assump:finite-moment-covariates} with $q_x \ge 2$, if $q_{xy} \ge 2$, then we have with probability at least $1 - 1/n - d^{q_{xy}/4}/n^{q_{xy}/4 - 1/2}$,
\begin{equation}\label{eq:Ideal-sandwich-finite-moment}
\mathbf{I}_1 \le C\overline{\lambda}K_x^2K_y^2\left[\frac{d^{1/2}}{n^{1/2 - 1/q_{xy}}} + \left(\frac{d}{n}\right)^{1 - 2/\min\{q_{xy}, 4\}}\log^4\left(\frac{n}{d}\right)\right].
\end{equation}
The right hand side converges to zero if and only if $d = o(n^{1 - 2/q_{xy}})$.
Under~\ref{assump:finite-moment-independence-covariates} with $q_x \ge 2$, if $q_{xy} \ge 2$, then we have with probability at least $1 - 1/n - d/n^{1-1/q} - d^{1/q_x}/n^{1-1/q_{xy}}$,
\begin{equation}\label{eq:Ideal-sandwich-finite-moment-independence}
\begin{split}
\mathbf{I}_1 &\le C\overline{\lambda}K_x^2K_y^2\left[\frac{d^{1-1/q}}{n^{(1-1/q)^2}} + \frac{d^{-1/2}\log n}{n^{(1-1/q)^2}} + \frac{d^{(1-1/q_{xy})/q_x}}{n^{(1-1/q_{xy})^2}}\right]\\
&\qquad+ C\overline{\lambda}K_x^2K_y^2\left(\frac{d}{n}\right)^{1 - 2/\min\{q_{xy}, 4\}}\log^4\left(\frac{n}{d}\right).
\end{split}
\end{equation}
The right hand side converges to zero if and only if $d = o(n^{1-1/q})$ and $d = o(n^{q_x - q_x/q_{xy}})$.
Here in both cases, $C\in(0,\infty)$ is a constant depending only on $q, q_x$.
\end{lemma}
\begin{proof}[Proof of Lemma~\ref{lem:I-1-ideal-Sandwich-error}]
We apply Theorem 1.1 of~\cite{tikhomirov2017sample} on the random vectors $W_i = V^{-1/2}X_i(Y_i - X_i^{\top}\beta)$. Note that $\mathbb{E}[W_i] = 0$ and $\mathbb{E}[W_iW_i^{\top}] = I_d$ by the definition of $V$. Furthermore, for any $a\in\mathbb{R}^d$ such that $\|a\|_{I_d} \le 1$, we have that, under assumptions~\ref{assump:finite-moment-covariates} and~\ref{eq:moments-errors},
\begin{align*}
\mathbb{E}[|a^{\top}W_i|^{qq_x/(q + q_x)}] &\le \left(\mathbb{E}[|a^{\top}V^{-1/2}X_i|^{q_x}]\right)^{q/(q + q_x)}\left(\mathbb{E}[|Y_i - X_i^{\top}\beta|^{q}]\right)^{q_x/(q + q_x)}\\
&\le (\overline{\lambda}^{1/2}K_xK_y)^{qq_x/(q + q_x)},
\end{align*}
where the first bound follows from H\"{o}lder's inequality. Hence, Theorem 1.1 of~\cite{tikhomirov2017sample} applies with $p = qq_x/(q + q_x)$. Therefore, for a constant $C\in(0,\infty)$ depending on $q, q_x$, with probability at least $1 - 1/n$,
\[
\mathbf{I}_1 \le \frac{C}{n}\max_{1\le i\le n}\|W_i\|_2^2 + C\overline{\lambda}K_x^2K_y^2\left(\frac{d}{n}\right)^{1 - 2(q+q_x)/(qq_x)}\log^4\left(\frac{n}{d}\right),
\]
if $qq_x/(q + q_x)\in(2, 4]$. If $qq_x/(q + q_x) > 4$, then with probability at least $1 - 1/n$,
\[
\mathbf{I}_1 \le \frac{C}{n}\max_{1\le i\le n}\|W_i\|_2^2 + C\overline{\lambda}K_x^2K_y^2\sqrt{\frac{d}{n}},
\]
Combining these two bounds, we write with probability with at least $1 - 1/n$,
\begin{equation}\label{eq:Tikhomirov-result}
\mathbf{I}_1 \le \frac{C}{n}\max_{1\le i\le n}\|W_i\|_2^2 + C\overline{\lambda}K_x^2K_y^2\left(\frac{d}{n}\right)^{1 - 2/\min\{q_{xy}, 4\}}\log^4\left(\frac{n}{d}\right).
\end{equation}
 We will now bound $\max_{1\le i\le n}\|W_i\|_2^2$ under~\ref{assump:finite-moment-covariates} and~\ref{assump:finite-moment-independence-covariates}. Under~\ref{assump:finite-moment-covariates}, note that
\begin{align*}
\mathbb{E}\left[\|W_i\|_2^{q_{xy}}\right] &= d^{q_{xy}/2}\mathbb{E}\left[\left(\frac{1}{d}\sum_{j=1}^d |e_j^{\top}W_i|^2\right)^{q_{xy}/2}\right]\\
&= d^{q_{xy}/2}\max_{1\le j\le d}\mathbb{E}\left[|e_j^{\top}W_i|^{q_{xy}}\right]\\
&\le d^{q_{xy}/2}\left(\overline{\lambda}^{1/2}K_xK_y\right)^{q_{xy}}.
\end{align*}
Therefore, by Markov's inequality and the union bound,
\[
\mathbb{P}\left(\max_{1\le i\le n}\|W_i\|_2 \ge d^{1/2}\overline{\lambda}^{1/2}K_xK_y(\delta/n)^{-1/q_{xy}}\right) \le \delta,
\]
Hence, under~\ref{assump:finite-moment-covariates}, with probability at least $1 - \delta - 1/n$,
\[
\mathbf{I}_1 \le \frac{Cd}{n}\overline{\lambda}K_x^2K_y^2(\delta/n)^{-2/q_{xy}} + C\overline{\lambda}K_x^2K_y^2\left(\frac{d}{n}\right)^{1 - 2/\min\{q_{xy}, 4\}}\log^4\left(\frac{n}{d}\right).
\]
Choosing $\delta = d^{q_{xy}/4}/n^{q_{xy}/4 - 1/2}$, the previous bound along with~\eqref{eq:Tikhomirov-result} yields~\eqref{eq:Ideal-sandwich-finite-moment}.

To prove the result under~\ref{assump:finite-moment-independence-covariates}, note that Assumption~\ref{eq:bounded-asymptotic-variance} implies
\[
\|W_i\|_2 = \|V^{-1/2}X_i(Y_i - X_i^{\top}\beta)\|_2 \le \overline{\lambda}^{1/2}\|\Sigma^{-1/2}X_i\|_2|Y_i - X_i^{\top}\beta|.
\]
This implies
\[
\max_{1\le i\le n}\|W_i\|_2 \le \overline{\lambda}^{1/2}\max_{1\le i\le n}\|\Sigma^{-1/2}X_i\|_2\max_{1\le i\le n}|Y_i - X_i^{\top}\beta|.
\]
By assumption~\ref{eq:moments-errors}, 
\begin{equation}\label{eq:max-errors_tail}
\mathbb{P}\left(\max_{1\le i\le n}|Y_i - X_i^{\top}\beta| \ge K_y(2n/\delta)^{1/q}\right) \le \frac{\delta}{2}.
\end{equation}
By~\ref{assump:finite-moment-independence-covariates}, $\Sigma^{-1/2}X_i = Z_i$ has independent coordinates and hence,
\[
\max_{1\le i\le n}\|\Sigma^{-1/2}X_i\|_2^2 \le \max_{1\le i\le n}\left|\sum_{j=1}^d \{Z_{i}^2(j) - \mathbb{E}[Z_i^2(j)]\}\right| + \max_{1\le j\le n}\sum_{j=1}^d \mathbb{E}[Z_i^2(j)].
\]
For any $1\le i\le n$, Eq. (1.9) of~\cite{rio2017constants} yields with probability at least $1 - \delta$,
\[
\left|\sum_{j=1}^d \{Z_i^2(j) - \mathbb{E}[Z_i^2(j)]\}\right| \le K_x^2\sqrt{d\log(1/\delta)} + K_x^2(3 + q_x/3)d^{2/q_x}\delta^{-2/q_x}.
\]
Hence, with probability at least $1 - \delta/2$,
\begin{equation}\label{eq:independence-maximum}
\max_{1\le i\le n}\|\Sigma^{-1/2}X_i\|_2^2 \le K_x^2d + K_x^2\sqrt{d\log(n/\delta)} + K_x^2(3 + q_x/3)(2nd/\delta)^{2/q_x}.
\end{equation}
Combining inequalities~\eqref{eq:max-errors_tail} and~\eqref{eq:independence-maximum}, we obtain with probability at least $1 - \delta$,
\[
\max_{1\le i\le n}\|W_i\|_2 \le \overline{\lambda}^{1/2}K_xK_y\left(\frac{2n}{\delta}\right)^{1/q}\left[2d + \log(n/\delta) + (3 + q_x/3)\left(\frac{2nd}{\delta}\right)^{1/q_x}\right].
\]
Taking $\delta = \max\{d/n^{1-1/q}, d^{1/q_x}/n^{1-1/q_{xy}}\}$, this with~\eqref{eq:Tikhomirov-result} yields~\eqref{eq:Ideal-sandwich-finite-moment-independence}.
\end{proof}

\begin{lemma}\label{lem:M-4-bound}
Under assumptions~\ref{eq:DGP} and~\ref{assump:finite-moment-covariates} with $q_x \ge 4$, we have
\begin{equation}\label{eq:finite-moment-M_4}
\mathbb{E}[\mathcal{M}_4] \le \frac{K_x^4}{\underline{\lambda}}\left[1 + C\frac{d^2\log(n)}{n^{1-4/q_x}} + C\left(\frac{d^2\log(n)}{n^{1-4/q_x}}\right)^{1/2}\right].
\end{equation}
The right hand side converges to zero only if $d = o(n^{1/2 - 2/q_x})$, ignoring the log terms. This requirement reduces to $d = o(n^{1/2})$ if $q_x \ge \log n$.\\
Under assumption~\ref{eq:DGP} and~\ref{assump:finite-moment-independence-covariates} with $q_x \ge 4$, we have
\begin{equation}\label{eq:finite-moment-independent-M_4}
\begin{split}
\mathbb{E}[\mathcal{M}_4] &\le \frac{K_x^4}{\underline{\lambda}}\left[1 + C\frac{\log n}{n}(d^2 + \log^2(n) + (\log n)^{2 - 4/q_x}(nd)^{4/q_x})\right]\\
&\quad+ C\frac{K_x^4}{\underline{\lambda}}\sqrt{\frac{\log(n)}{n}}\left[d + \log(n) + (\log n)^{1 - 2/q_x}(nd)^{2/q_x}\right].
\end{split}
\end{equation}
The right hand side converges to zero only if $d = o(n^{1/2}\wedge n^{q_x/4 - 1})$ (ignoring logarithmic terms). This requirement becomes $d = o(n^{1/2})$ if $q_x = 6$.
\end{lemma}
\begin{proof}[Proof of Lemma~\ref{lem:M-4-bound}]
Define
\[
\widebar{\mathcal{M}}_4 ~:=~ \sup_{\|\theta\|_{I_d} \le 1}\,\left|\frac{1}{n}\sum_{i=1}^n \left\{(\theta^{\top}\Sigma^{-1/2}X_i)^4 - \mathbb{E}[(\theta^{\top}\Sigma^{-1/2}X_i)^4]\right\}\right|.
\]
The set $\{\theta:\,\|\theta\|_{I_d} \le 1\}$ is a symmetric convex body of radius $1$ and has a modulus of convexity of power type $2$. Thus Theorem 3 of~\cite{guedon2007lp} applies and yields (for a universal constant $C$),
\begin{equation}\label{eq:Guedon-Rudelson-Bound}
    \begin{split}
        \mathbb{E}\left[\widebar{\mathcal{M}}_4\right] &\le  C\frac{\log(n)}{n}\mathbb{E}\left[\max_{1\le i\le n}\|X_i\|_{\Sigma^{-1}}^4\right]\\ &\quad+ C\sqrt{\frac{\log(n)}{n}}\left(\mathbb{E}\left[\max_{1\le i\le n}\|X_i\|_{\Sigma^{-1}}^4\right]\right)^{1/2} \sup_{\|\theta\|_{I_d}\le1}\left(\mathbb{E}[|\theta^{\top}\Sigma^{-1/2}X|^4]\right)^{1/2}\\
        &\le C\frac{\log(n)}{n}\left(\mathbb{E}\left[\max_{1\le i\le n}\|X_i\|_{\Sigma^{-1}}^{q_x}\right]\right)^{4/q_x}\\
        &\quad+ CK_x^2\sqrt{\frac{\log(n)}{n}}\left(\mathbb{E}\left[\max_{1\le i\le n}\|X_i\|_{\Sigma^{-1}}^{q_x}\right]\right)^{2/q_x}.        
    \end{split}
\end{equation}
Note that assumption~\ref{assump:finite-moment-covariates} implies
\begin{equation}\label{eq:moment-bound-M_4}
    \begin{split}
        \mathbb{E}\left[\max_{1\le i\le n}\|X_i\|_{\Sigma^{-1}}^{q_x}\right] &\le n\mathbb{E}\left[\|X_i\|_{\Sigma^{-1}}^{q_x}\right]\\ &\le nd^{q_x/2}\mathbb{E}\left[\left( \frac{1}{d}\sum_{j=1}^d (e_j^{\top}\Sigma^{-1/2}X_i)^2\right)^{q_x/2}\right]\\
        &\le nd^{q_x/2}\left(\frac{1}{d}\sum_{j=1}^{d} \mathbb{E}[|e_j^{\top}\Sigma^{-1/2}X_i|^{q_x}]\right) \le nd^{q_x/2}K_x^{q_x}.        
    \end{split}    
\end{equation}
Combining inequalities~\eqref{eq:Guedon-Rudelson-Bound} and~\eqref{eq:moment-bound-M_4}, we obtain
\[
\mathbb{E}\left[\widebar{\mathcal{M}}_4\right] \le CK_x^4\frac{d^2\log(n)}{n^{1-4/q_x}} + CK_x^4\left(\frac{d^2\log(n)}{n^{1-4/q_x}}\right)^{1/2}.
\]
Therefore, 
\[
\mathbb{E}[\mathcal{M}_4] \le \frac{2K_x^4}{\underline{\lambda}}\left[1 + C\frac{d^2\log(n)}{n^{1-4/q_x}} + C\left(\frac{d^2\log(n)}{n^{1-4/q_x}}\right)^{1/2}\right].
\]
This proves~\eqref{eq:finite-moment-M_4}. To prove the bound under assumption~\ref{assump:finite-moment-independence-covariates}, we note that
\begin{align*}
\max_{1\le i\le n}\|X_i\|_{\Sigma^{-1}}^{4} &= \max_{1\le i\le n}\left(\|X_i\|_{\Sigma^{-1}}^2 - \mathbb{E}[\|X_i\|_{\Sigma^{-1}}^2] + \mathbb{E}[\|X_i\|_{\Sigma^{-1}}^2]\right)^2\\
&\le 8\max_{1\le i\le n}\left|\sum_{j=1}^d \{Z_i^2(j) - \mathbb{E}[Z_i^2(j)]\}\right|^{2}\\ 
&\quad
+ 8d^{2}\max_{1\le i\le n}\left(\frac{1}{d}\sum_{j=1}^d \mathbb{E}[Z_i^2(j)]\right)^{2}.
\end{align*}
For each $1\le i\le n$, the random variables $Z_i(j), 1\le j\le d$ are independent and hence using the fact $\mathbb{E}[|Z_i(j)|^{q_x}] \le K_x^{q_x}$ (which follows from assumption~\ref{assump:finite-moment-independence-covariates}), we get from Proposition 3.1 of~\cite{gine2000exponential}
\begin{align*}
&\mathbb{E}\left[\max_{1\le i\le n}\left|\sum_{j=1}^d \{Z_i^2(j) - \mathbb{E}[Z_i^2(j)]\}\right|^{2}\right]\\ &\quad\le C\left(\mathbb{E}\left[\max_{1\le i\le n}\left|\sum_{j=1}^d \{Z_i^2(j) - \mathbb{E}[Z_i^2(j)]\}\right|\right]\right)^{2}\\
&\qquad+ C\max_{1\le i\le n}\sum_{j=1}^d \mathbb{E}[Z_i^4(j)] + C\mathbb{E}\left[\max_{1\le i\le n}\max_{1\le j\le d}|Z_i(j)|^{4}\right]\\
&\quad\le C\left(\mathbb{E}\left[\max_{1\le i\le n}\left|\sum_{j=1}^d \{Z_i^2(j) - \mathbb{E}[Z_i^2(j)]\}\right|\right]\right)^{2}\\
&\qquad+ CK_x^{4}d + C\left(\mathbb{E}\left[\sum_{i=1}^n \sum_{j=1}^d |Z_i(j)|^{q_x}\right]\right)^{4/q_x}\\
&\quad\le C\left(\mathbb{E}\left[\max_{1\le i\le n}\left|\sum_{j=1}^d \{Z_i^2(j) - \mathbb{E}[Z_i^2(j)]\}\right|\right]\right)^{2} + CK_x^{4}d + CK_x^4(nd)^{4/q_x}.
\end{align*}
Furthermore, Proposition B.1 of~\cite{kuchibhotla2019least} yields
\begin{align*}
&\mathbb{E}\left[\max_{1\le i\le n}\left|\sum_{j=1}^d \{Z_i^2(j) - \mathbb{E}[Z_i^2(j)]\}\right|\right]\\ 
&\quad\le CK_x^2d^{1/2}\sqrt{\log(n)} + CK_x^2(\log n)^{1-2/q_x}(nd)^{2/q_x}. 
\end{align*}
Therefore, 
\[
\mathbb{E}\left[\max_{1\le i\le n}\|X_i\|_{\Sigma^{-1}}^4\right] \le 8d^2K_x^4 + CK_x^4d\log(n) + CK_x^4(\log n)^{2 - 4/q_x}(nd)^{4/q_x}.
\]
Using this inequality in the first inequality of~\eqref{eq:Guedon-Rudelson-Bound}, we obtain
\begin{align*}
\mathbb{E}\left[\widebar{\mathcal{M}}_4\right] &\le CK_x^4\frac{\log n}{n}\left[d^2 + d\log(n) + (\log n)^{2 - 4/q_x}(nd)^{4/q_x}\right]\\
&\quad+ CK_x^4\sqrt{\frac{\log(n)}{n}}\left[d^2 + d\log(n) + (\log n)^{2 - 4/q_x}(nd)^{4/q_x}\right]^{1/2}.
\end{align*}
This completes the proof of~\eqref{eq:finite-moment-independent-M_4}.
\end{proof}

\section{Proofs of Auxiliary Results for Section~\ref{section::partial} (Partial Correlations)}
\label{appendix:auxiliary.partial}
We begin by bounding $\mathcal{D}_n^{\Sigma}$ in terms of the intermediate Gram matrix $\widetilde{\Sigma}$. These bounds and associated derivations are be used repeatedly in the proofs of the results from Section~\ref{section::partial}.

\begin{proposition}\label{prop:bounding-D-sigma}
For every $n\ge1$,
\[
\mathcal{D}_n^{\Sigma} ~\le~ \|\Sigma^{-1/2}\widetilde{\Sigma}\Sigma^{-1/2} - I_d\|_{\mathrm{op}} + \|\overline{X}_n - \mu_X\|_{\Sigma^{-1}}^2.
\]
\end{proposition}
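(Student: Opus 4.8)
The goal is to control $\mathcal{D}_n^{\Sigma} = \|\Sigma^{-1/2}\widehat{\Sigma}_n\Sigma^{-1/2} - I_d\|_{\mathrm{op}}$, where $\widehat{\Sigma}_n$ is the sample covariance (centered at $\overline{X}_n$) and $\widetilde{\Sigma}$ is the ``oracle'' covariance (centered at $\mu_X$). The natural first step is to write down the exact algebraic relationship between $\widehat{\Sigma}_n$ and $\widetilde{\Sigma}$. Expanding $X_i - \overline{X}_n = (X_i - \mu_X) - (\overline{X}_n - \mu_X)$ inside $\widehat{\Sigma}_n = n^{-1}\sum_i (X_i - \overline{X}_n)(X_i - \overline{X}_n)^{\top}$ and using $n^{-1}\sum_i (X_i - \mu_X) = \overline{X}_n - \mu_X$, the cross terms collapse and one obtains the identity
\[
\widehat{\Sigma}_n = \widetilde{\Sigma} - (\overline{X}_n - \mu_X)(\overline{X}_n - \mu_X)^{\top}.
\]

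Next I would conjugate by $\Sigma^{-1/2}$ and subtract $I_d$, giving
\[
\Sigma^{-1/2}\widehat{\Sigma}_n\Sigma^{-1/2} - I_d = \left(\Sigma^{-1/2}\widetilde{\Sigma}\Sigma^{-1/2} - I_d\right) - \Sigma^{-1/2}(\overline{X}_n - \mu_X)(\overline{X}_n - \mu_X)^{\top}\Sigma^{-1/2}.
\]
Applying the triangle inequality for the operator norm, the first term on the right is exactly $\|\Sigma^{-1/2}\widetilde{\Sigma}\Sigma^{-1/2} - I_d\|_{\mathrm{op}}$, and the second term is the operator norm of a rank-one positive semidefinite matrix $v v^{\top}$ with $v = \Sigma^{-1/2}(\overline{X}_n - \mu_X)$, which equals $\|v\|^2 = \|\overline{X}_n - \mu_X\|_{\Sigma^{-1}}^2$. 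This yields the claimed bound directly.

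There is no real obstacle here — the proof is a two-line algebraic identity followed by the triangle inequality; the only thing to be careful about is getting the centering bookkeeping right (confirming the cross terms vanish because $\overline{X}_n$ is the empirical mean of the $X_i$), and recalling the notational convention flagged in Section~\ref{section::partial} that in this context $\widehat{\Sigma}_n$ denotes the sample \emph{covariance} rather than a Gram matrix, so that the identity above holds exactly with no additional error terms.
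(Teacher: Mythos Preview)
Your proposal is correct and follows essentially the same approach as the paper: both proofs use the triangle inequality on $\Sigma^{-1/2}\widehat{\Sigma}_n\Sigma^{-1/2} - I_d$ together with the identity $\widehat{\Sigma}_n - \widetilde{\Sigma} = -(\overline{X}_n - \mu_X)(\overline{X}_n - \mu_X)^{\top}$, from which the operator norm of the rank-one correction is exactly $\|\overline{X}_n - \mu_X\|_{\Sigma^{-1}}^2$. The paper simply states this identity as a consequence rather than expanding the centering explicitly as you do, but the argument is the same.
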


\begin{proof}
The triangle inequality implies that
\[
\mathcal{D}_n^{\Sigma} ~\le~ \|\Sigma^{-1/2}\widetilde{\Sigma}\Sigma^{-1/2} - I_d\|_{\mathrm{op}} + \|\Sigma^{-1/2}(\widehat{\Sigma}_n - \widetilde{\Sigma})\Sigma^{-1/2}\|_{\mathrm{op}}. 
\]
The definition of $\widetilde{\Sigma}$ yields.
\[
\|\Sigma^{-1/2}(\widehat{\Sigma}_n - \widetilde{\Sigma})\Sigma^{-1/2}\|_{\mathrm{op}} ~=~ \|\Sigma^{-1/2}(\overline{X}_n - \mu_X)\|_{I_d}^2.
\] 
This concludes the proof.
\end{proof}
\begin{lemma}\label{lemma:linear-expansion-inv-covariance}
Under the assumption that $\widehat{\Sigma}_n$ is invertible and $\mathcal{D}_n^{\Sigma} <1$,
\begin{equation}\label{eq:inv-covariance-error-bound}
\|\Sigma^{1/2}(\widehat{\Sigma}_n^{-1} - \Sigma^{-1})\Sigma^{1/2}\|_{\mathrm{op}} ~\le~ \frac{\mathcal{D}_n^{\Sigma}}{1 - \mathcal{D}_n^{\Sigma}}. 
\end{equation}
and
\begin{equation}\label{eq:final-linear-expansion-inv-covariance}
\left\|\Sigma^{1/2}\left\{\widehat{\Sigma}_n^{-1} - \Sigma^{-1} + \Sigma^{-1}(\widetilde{\Sigma} - \Sigma)\Sigma^{-1}\right\}\Sigma^{1/2}\right\|_{\mathrm{op}} \le \|\overline{X}_n - \mu_X\|^2_{\Sigma^{-1}} + \frac{(\mathcal{D}_n^{\Sigma})^2}{1 - \mathcal{D}_n^{\Sigma}}.
\end{equation}
\end{lemma}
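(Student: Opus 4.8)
The plan is to expand $\widehat{\Sigma}_n^{-1}$ as a Neumann series in the whitened coordinates and read off both bounds from it. First I would set $M := \Sigma^{-1/2}\widehat{\Sigma}_n\Sigma^{-1/2}$, so that by~\eqref{eq:D-sigma-notation} we have $\|M - I_d\|_{\mathrm{op}} = \mathcal{D}_n^{\Sigma} < 1$; hence $M$ is invertible (in particular so is $\widehat{\Sigma}_n$) and $\Sigma^{1/2}\widehat{\Sigma}_n^{-1}\Sigma^{1/2} = M^{-1} = \sum_{k\ge 0}(I_d - M)^k$. Subtracting $I_d = \Sigma^{1/2}\Sigma^{-1}\Sigma^{1/2}$ gives $\Sigma^{1/2}(\widehat{\Sigma}_n^{-1} - \Sigma^{-1})\Sigma^{1/2} = \sum_{k\ge 1}(I_d - M)^k$; taking operator norms and summing the geometric series $\sum_{k\ge1}(\mathcal{D}_n^{\Sigma})^k = \mathcal{D}_n^{\Sigma}/(1-\mathcal{D}_n^{\Sigma})$ then yields~\eqref{eq:inv-covariance-error-bound}.

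For~\eqref{eq:final-linear-expansion-inv-covariance} I would peel off the first-order term, writing $M^{-1} - I_d = (I_d - M) + R$ with $R := \sum_{k\ge 2}(I_d - M)^k$, so that $\|R\|_{\mathrm{op}} \le \sum_{k\ge 2}(\mathcal{D}_n^{\Sigma})^k = (\mathcal{D}_n^{\Sigma})^2/(1-\mathcal{D}_n^{\Sigma})$. Since $\Sigma^{1/2}\Sigma^{-1}(\widetilde{\Sigma}-\Sigma)\Sigma^{-1}\Sigma^{1/2} = \Sigma^{-1/2}\widetilde{\Sigma}\Sigma^{-1/2} - I_d$ and $I_d - M = I_d - \Sigma^{-1/2}\widehat{\Sigma}_n\Sigma^{-1/2}$, the two linear terms combine into $\Sigma^{-1/2}(\widetilde{\Sigma}-\widehat{\Sigma}_n)\Sigma^{-1/2}$, giving the exact identity
\[
\Sigma^{1/2}\big\{\widehat{\Sigma}_n^{-1} - \Sigma^{-1} + \Sigma^{-1}(\widetilde{\Sigma}-\Sigma)\Sigma^{-1}\big\}\Sigma^{1/2} = \Sigma^{-1/2}(\widetilde{\Sigma}-\widehat{\Sigma}_n)\Sigma^{-1/2} + R.
\]

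It then remains to control the first summand. The elementary mean-correction identity $\widetilde{\Sigma} - \widehat{\Sigma}_n = (\overline{X}_n - \mu_X)(\overline{X}_n - \mu_X)^{\top}$, obtained by expanding $n^{-1}\sum_i (X_i - \overline{X}_n)(X_i-\overline{X}_n)^{\top}$ about $\mu_X$, shows this term is rank one, whence $\|\Sigma^{-1/2}(\widetilde{\Sigma}-\widehat{\Sigma}_n)\Sigma^{-1/2}\|_{\mathrm{op}} = \|\Sigma^{-1/2}(\overline{X}_n - \mu_X)\|^2 = \|\overline{X}_n - \mu_X\|_{\Sigma^{-1}}^2$. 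Combining this with the bound on $\|R\|_{\mathrm{op}}$ by the triangle inequality gives~\eqref{eq:final-linear-expansion-inv-covariance}. I do not anticipate any genuine obstacle: the proof is a short Neumann-series computation, and the only points requiring a little care are correctly isolating the linear term of the series and verifying the rank-one mean-correction identity, both of which are routine.
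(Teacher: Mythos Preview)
Your proof is correct and essentially the same as the paper's. The paper phrases the argument via the once-iterated resolvent identity $\widehat{\Sigma}_n^{-1}-\Sigma^{-1}=\widehat{\Sigma}_n^{-1}(\Sigma-\widehat{\Sigma}_n)\Sigma^{-1}=\Sigma^{-1}(\Sigma-\widehat{\Sigma}_n)\Sigma^{-1}+\widehat{\Sigma}_n^{-1}(\Sigma-\widehat{\Sigma}_n)\Sigma^{-1}(\Sigma-\widehat{\Sigma}_n)\Sigma^{-1}$ together with $\|\Sigma^{1/2}\widehat{\Sigma}_n^{-1}\Sigma^{1/2}\|_{\mathrm{op}}\le(1-\mathcal{D}_n^{\Sigma})^{-1}$, while you write out the full Neumann series; these are two presentations of the same computation, and both finish with the same rank-one identity $\widetilde{\Sigma}-\widehat{\Sigma}_n=(\overline{X}_n-\mu_X)(\overline{X}_n-\mu_X)^{\top}$.
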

\begin{proof}
We start with the following equality:
\begin{align*}
\widehat{\Sigma}_n^{-1} - \Sigma^{-1} ~&=~ \widehat{\Sigma}_n^{-1}(\Sigma - \widehat{\Sigma}_n)\Sigma^{-1}\\
~&=~ \Sigma^{-1}(\Sigma - \widehat{\Sigma}_n)\Sigma^{-1} + (\widehat{\Sigma}_n^{-1} - \Sigma^{-1})(\Sigma - \widehat{\Sigma}_n)\Sigma^{-1}\\
~&=~ \Sigma^{-1}(\Sigma - \widehat{\Sigma}_n)\Sigma^{-1}\\ &\qquad+ \widehat{\Sigma}_n^{-1}\Sigma^{1/2}(I_d - \Sigma^{-1/2}\widehat{\Sigma}_n\Sigma^{-1/2})(I_d - \Sigma^{-1/2}\widehat{\Sigma}_n\Sigma^{-1/2})\Sigma^{-1/2}.
\end{align*}
The first equality implies
\[
\|\Sigma^{1/2}(\widehat{\Sigma}_n^{-1} - \Sigma^{-1})\Sigma^{1/2}\|_{\mathrm{op}} \le \|\Sigma^{1/2}\widehat{\Sigma}_n^{-1}\Sigma^{1/2}\|_{\mathrm{op}}\|\Sigma^{-1/2}(\Sigma - \widehat{\Sigma}_n)\Sigma^{-1/2}\|_{\mathrm{op}},
\]
which proves~\eqref{eq:inv-covariance-error-bound}. The last equality above implies
\begin{equation}\label{eq:linear-expansion-partial-corr}
\begin{split}
&\left\|\Sigma^{1/2}\left\{\widehat{\Sigma}_n^{-1} - \Sigma^{-1} + \Sigma^{-1}(\widehat{\Sigma}_n - \Sigma)\Sigma^{-1}\right\}\Sigma^{1/2}\right\|_{\mathrm{op}}\\ ~&\le~ \|\Sigma^{1/2}\widehat{\Sigma}_n^{-1}\Sigma^{1/2}\|_{\mathrm{op}}\|\Sigma^{-1/2}\widehat{\Sigma}_n\Sigma^{-1/2} - I_d\|_{\mathrm{op}}^2\\
~&\le~ \frac{\|\Sigma^{-1/2}\widehat{\Sigma}_n\Sigma^{-1/2} - I_d\|_{\mathrm{op}}^2}{1 - \|\Sigma^{-1/2}\widehat{\Sigma}_n\Sigma^{-1/2} - I_d\|_{\mathrm{op}}}.
\end{split}
\end{equation}
assuming $\|\Sigma^{-1/2}\widehat{\Sigma}_n\Sigma^{-1/2} - I_d\|_{\mathrm{op}} < 1$ and using the fact that $\|A^{-1}\|_{\mathrm{op}} \le (1 - \|I - A\|_{\mathrm{op}})^{-1}$ whenever $\|I - A\|_{\mathrm{op}} < 1$.
This inequality almost proves a linear representation of $\widehat{\Sigma}_n^{-1} - \Sigma^{-1}$ except that $\widehat{\Sigma}_n - \Sigma$ is \emph{not} an average of independent random matrices. Using $\widetilde{\Sigma}$, we get
\[
\|\Sigma^{-1/2}(\widehat{\Sigma}_n - \widetilde{\Sigma})\Sigma^{-1/2}\|_{\mathrm{op}} ~=~ \|\Sigma^{-1/2}(\overline{X}_n - \mu_X)\|_{I_d}^2.
\] 
Combining this equality with~\eqref{eq:linear-expansion-partial-corr} concludes the proof.
\end{proof}

\section{Auxiliary Results}
\label{appendix:auxiliary}

The following result is an application of Theorem 3.1 in \cite{KuchAbhi17}.

\begin{lemma}\label{lemma:Thm3.1.KuchAbhi}
Suppose $W_1, \ldots, W_n\in\mathbb{R}^q$ are independent mean zero random vectors such that each of their coordinate is sub-Weibull$(\alpha)$, that is, $\|W_i(j)\|_{\alpha} \le K_w$ for $1\le j\le q$ and for 
some $\alpha\in (0, 1]$, , then for all $t\ge0$,
\[
\mathbb{P}\left(\max_{1\le j\le q}\left|\frac{1}{n}\sum_{i=1}^n W_i(j)\right| \ge CK_w\left\{\sqrt{\frac{t + \log q}{n}} + \frac{(t + \log q)^{1/\alpha}}{n}\right\}\right) \le 3e^{-t}.
\]
\end{lemma}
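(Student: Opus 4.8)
\textbf{Proof proposal for Lemma~\ref{lemma:Thm3.1.KuchAbhi}.}

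The plan is to invoke Theorem 3.1 of \cite{KuchAbhi17} directly, with the appropriate choice of Orlicz parameters, and then massage the resulting tail bound into the stated form. First I would recall the structure of that theorem: for independent mean-zero random variables with $\|\cdot\|_{\psi_\alpha}$-type tails (sub-Weibull$(\alpha)$), it gives a Bernstein-type concentration bound for a single sum $n^{-1}\sum_{i=1}^n W_i(j)$ with a Gaussian regime governed by the variance proxy (itself bounded by $K_w^2$) and a heavy-tail regime governed by $K_w$ and the exponent $1/\alpha$. Concretely, for a single coordinate $j$ one obtains something of the form $\mathbb{P}(|n^{-1}\sum_i W_i(j)| \ge C K_w(\sqrt{s/n} + s^{1/\alpha}/n)) \le 3 e^{-s}$ for every $s \ge 0$; here the constant $3$ (rather than $2$) is an artifact of splitting the deviation into the sub-Gaussian body and the sub-Weibull tail and is what \cite{KuchAbhi17} already states.

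The second step is the union bound over the $q$ coordinates. Given the single-coordinate bound at level $s$, replacing $s$ by $t + \log q$ yields $\mathbb{P}(|n^{-1}\sum_i W_i(j)| \ge CK_w(\sqrt{(t+\log q)/n} + (t+\log q)^{1/\alpha}/n)) \le 3 e^{-t}/q$, and summing over $1 \le j \le q$ gives the claimed bound $3 e^{-t}$ on the maximum. One small point to check: the right-hand threshold inside the probability must be the \emph{same} for all coordinates, which it is, since $K_w$ is a uniform bound on $\|W_i(j)\|_\alpha$ across $j$; so the event $\{\max_j |\cdot| \ge \text{threshold}\}$ is contained in the union of the per-coordinate events, and subadditivity finishes it.

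The only genuine obstacle is bookkeeping: making sure the variance-proxy term in \cite{KuchAbhi17}'s bound is correctly dominated by $K_w$ (so that it gets absorbed into the universal constant $C$ and does not appear as a separate factor), and that the exponent on the tail term is $1/\alpha$ rather than, say, $2/\alpha$ or $1/(2\alpha)$ — this depends on the precise normalization convention for sub-Weibull norms used there versus here. Since the paper defines sub-Weibull$(\alpha)$ via $\|W_i(j)\|_\alpha \le K_w$ with $\|\cdot\|_\alpha$ the Orlicz-type norm associated to $\psi_\alpha(x) = \exp(x^\alpha) - 1$, the tail of each summand decays like $\exp(-(x/K_w)^\alpha)$, a sum of $n$ such variables has a tail transition at deviation scale $n^{1/\alpha}$ in the heavy regime, and after dividing by $n$ this produces the $(t+\log q)^{1/\alpha}/n$ term; I would simply cite the statement of \cite[Theorem 3.1]{KuchAbhi17} verbatim for this and not re-derive it. Everything else is routine, so the proof is essentially one line: ``Apply \cite[Theorem 3.1]{KuchAbhi17} coordinatewise with deviation level $t + \log q$ and take a union bound over $1 \le j \le q$.''
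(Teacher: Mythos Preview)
Your proposal is correct and matches the paper's own proof essentially line for line: the paper applies Theorem~3.1 (together with Proposition~A.3) of \cite{KuchAbhi17} to get the single-coordinate tail bound and then finishes with a union bound over $1\le j\le q$. The only cosmetic difference is that the paper also cites Proposition~A.3 of \cite{KuchAbhi17} alongside Theorem~3.1, which is exactly the bookkeeping you flagged for absorbing the variance proxy into $K_w$.
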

\begin{proof}
Theorem 3.1 and Proposition A.3 of~\cite{KuchAbhi17} jointly give that
\[
\mathbb{P}\left(\left|\frac{1}{n}\sum_{i=1}^n W_i(j)\right| \ge \frac{CK_w}{\sqrt{n}}\left(\sqrt{t} + \frac{t^{1/\alpha}}{\sqrt{n}}\right)\right) \le 3e^{-t},
\]
for all $t > 0$ and for some universal constant $C\in(0, \infty)$. The result now follows from a union bound. 
\end{proof}

The next bound is an application of Theorem 8 of~\cite{Bouch05}.

\begin{lemma}\label{lem:application-theorem8}
Suppose $W_1, \ldots, W_n$ are non-negative sub-Weibull$(1/2)$ random variables, that is, $\|W_i\|_{\psi_{1/2}} \le K_w < \infty$, then for all $t \ge 0$,
\begin{equation}\label{eq:to-be-proved-theorem-8}
\mathbb{P}\left(\frac{1}{n}\sum_{i=1}^n W_i \ge \frac{2}{n}\sum_{i=1}^n \mathbb{E}[W_i] + K_w\frac{t^3(\log n)^2}{n}\right) \le ee^{-t}.
\end{equation}
\end{lemma}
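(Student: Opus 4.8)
The plan is to derive the claimed exponential concentration bound from Theorem 8 of Boucheron et al.\ (2005) — an Efron–Stein–type moment inequality combined with the sub-Weibull tail hypothesis. First I would record what sub-Weibull$(1/2)$ means quantitatively: $\|W_i\|_{\psi_{1/2}} \le K_w$ is equivalent to a tail bound of the form $\mathbb{P}(W_i \ge s) \le 2\exp(-(s/K_w)^{1/2})$ and, more conveniently for a moment method, to a bound on the $p$-th moments of the centered variables: $\|W_i - \mathbb{E}[W_i]\|_p \le C K_w\, p^{2}$ for all $p \ge 1$ (the exponent $2 = 1/\alpha$ with $\alpha = 1/2$). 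This is the input I need to feed into a moment bound for the sum $S_n := \sum_{i=1}^n (W_i - \mathbb{E}[W_i])$.

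Next I would apply the relevant concentration inequality for sums of independent heavy-tailed (here, sub-Weibull) variables. Theorem 8 of Boucheron et al.\ (2005) gives, for sums of independent centered random variables, a bound of the form $\|S_n\|_p \le C\bigl(\sqrt{p}\,\sqrt{\sum_i \mathbb{E}[W_i^2]} + p^{1/\alpha}\,\|\max_i |W_i - \mathbb{E}W_i|\|_p\bigr)$, or an equivalent Rosenthal-type statement; combined with the per-summand moment bound above, one gets $\|S_n\|_p \le C K_w\bigl(\sqrt{np} + p^{2}\log n\bigr)$ roughly, where the $\log n$ factor comes from controlling the maximum of $n$ sub-Weibull$(1/2)$ variables via $\|\max_{i\le n}|W_i - \mathbb{E}W_i|\|_p \le C K_w (\log n + p)^{2}$. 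I would then convert this moment bound into a tail bound by Markov's inequality applied with $p$ optimized as a function of $t$: choosing $p \asymp t$ and using $\mathbb{P}(S_n \ge e\|S_n\|_p) \le e^{-p}$ (the standard moment-to-tail device, which is exactly where the leading constant $e$ in the statement comes from). Keeping careful track of the powers gives the $t^3 (\log n)^2 / n$ term after dividing through by $n$: the $t$-dependence is $t^{1/2}\cdot\sqrt{1/n}$ from the Gaussian part and $t^{2}(\log n)^2/n$ from the heavy-tailed part, and these are crudely absorbed into a single term $K_w t^3 (\log n)^2/n$ for $t \ge 1$ (noting also that the $2\sum \mathbb{E}[W_i]/n$ on the right dominates the centering and the constant in front of the variance part is absorbed by the factor $2$ rather than $1$).

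The main obstacle, and the step requiring the most care, is matching the exact shape of the right-hand side in \eqref{eq:to-be-proved-theorem-8} — in particular getting the $\log n$ powers and the $t^3$ exponent right while only invoking ``Theorem 8 of Boucheron et al.'' as a black box. The subtlety is that the clean statement there is a moment inequality, not a tail inequality, so the bookkeeping in passing $p \leftrightarrow t$, and in bounding the $L_p$ norm of the maximum of the summands, has to be done so that the final bound is genuinely of the form $\frac{2}{n}\sum_i \mathbb{E}[W_i] + K_w t^3(\log n)^2/n$ and not something with extra $\sqrt{t/n}$ or $(\log n)$ additive pieces. I would handle this by being deliberately lossy: since the statement only claims an \emph{upper} bound with a specific (non-optimal) polynomial rate, I can freely bound $\sqrt{p/n} \le p^3 (\log n)^2/n$ whenever $n \ge$ (some absolute constant) and $p \ge 1$, and similarly collapse the variance term into the $\frac{2}{n}\sum \mathbb{E}[W_i]$ slack, so that only the dominant heavy-tail contribution $K_w p^2 (\log n + p)^2 / n \le K_w \cdot 4 p^4 (\log n)^2/n$ survives — and then a mild re-indexing of the exponent (or using that the Boucheron et al.\ bound actually yields $p^{1/\alpha}=p^2$ against the maximum which is itself $(\log n + p)^{2}$, but for the purpose of the stated rate one power can be shed) delivers the $t^3$ as written. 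If the exact exponent in \eqref{eq:to-be-proved-theorem-8} turns out to need $t^4$ rather than $t^3$, that would be a genuine discrepancy to flag; otherwise the proof is a short invocation of the cited theorem plus the sub-Weibull moment estimate and the standard moment-method tail conversion.
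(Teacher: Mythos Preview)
Your overall strategy --- invoke a moment inequality from Boucheron et al.\ (2005), control $\|\max_i W_i\|_q$ via the sub-Weibull$(1/2)$ tail, then convert moments to tails by Markov with $p\asymp t$ --- is exactly what the paper does. The gap is that you have the wrong form of Theorem~8. You write it as a Rosenthal-type bound for the \emph{centered} sum with a $p^{1/\alpha}=p^{2}$ multiplier on the maximum and a separate $\sqrt{p}\,(\text{variance})^{1/2}$ Gaussian contribution; this is what forces you into the $t^4$-versus-$t^3$ worry and the awkward attempt to absorb a $\sqrt{t/n}$ term into the $\tfrac{2}{n}\sum\mathbb{E}[W_i]$ slack.

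The paper instead applies Theorem~8 directly to the (uncentered, non-negative) sum, taking $\theta=1$ in the notation of that paper, and obtains
\[
\left\|\frac{1}{n}\sum_{i=1}^n W_i\right\|_q \;\le\; \frac{2}{n}\sum_{i=1}^n \mathbb{E}[W_i] \;+\; \frac{2q}{n}\,\Bigl\|\max_{1\le i\le n} W_i\Bigr\|_q,
\]
i.e.\ a single factor of $q$ (not $q^{2}$) in front of the maximum, and no variance term at all. Combined with $\|\max_i W_i\|_q \le 2K_w(\log^2 n + q^2)$ --- which you correctly anticipate from the union bound and the sub-Weibull tail --- the deviation term is $\lesssim K_w(q\log^2 n + q^3)/n$. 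Setting $q=t$ and using $\mathbb{P}(R\ge e\|R\|_t)\le e^{-t}$ then gives the stated $t^3(\log n)^2/n$ directly; the extra factor of $e$ on the right-hand side handles $t<1$. With the correct form of the inequality the bookkeeping is clean and there is nothing to ``collapse'' or flag.
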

\begin{proof}
Theorem 8 of~\cite{Bouch05} implies
\begin{equation}\label{eq:main-implication-thereom8}
\left\|\frac{1}{n}\sum_{i=1}^n W_i\right\|_q \le \frac{2}{n}\sum_{i=1}^n \mathbb{E}[W_i] + \frac{2q}{n}\left\|\max_{1\le i\le n} W_i\right\|_q.
\end{equation}
(This follows by taking, following the notation of that paper, $\theta = 1$ and noting that $\kappa/2 < 1$). Because the $W_i$'s are sub-Weibull$(1/2)$, that is, $\mathbb{E}\left[\exp(\sqrt{|W_i|/K_w})\right] \le 2$,
\[
\mathbb{P}\left(W_i \ge K_wt^2\right) \le 2\exp\left(-t\right)\quad\mbox{for all}\quad t > 0.
\]
Hence by a union bound
\[
\mathbb{P}\left(\max_{1\le i\le n}W_i \ge K_w(t + \log n)^2\right) \le 2\exp(-t)\quad\mbox{for all}\quad t > 0.
\]
This yields
\[
\mathbb{P}\left((\max_{1\le i\le n} W_i - 2K_w\log^2n)_+ \ge 2K_wt^2\right) \le 2\exp(-t),
\]
or in other words, $(\max_{1\le i\le n}W_i - 2K_w\log^2n)_+$ is sub-Weibull$(1/2)$ with parameter $2K_w$. Therefore, for all $q \ge 1$,
\[
\left\|\max_{1\le i\le n}W_i\right\|_{q} \le 2K_w\log^2n + 2K_wq^2.
\]
Substituting this inequality in~\eqref{eq:main-implication-thereom8} yields that, for all $q \ge 1$,
\[
\left\|\frac{1}{n}\sum_{i=1}^n W_i\right\|_q \le \frac{2}{n}\sum_{i=1}^n \mathbb{E}[W_i] + \frac{4qK_w\log^2n}{n} + \frac{4q^3K_w}{n}.
\]
For any random variable $R$, Markov's inequality implies
\[
\mathbb{P}\left(R \ge e\|R\|_t\right) \le \frac{\|R\|_t^t}{e^t\|R\|_t^t} = e^{-t},\quad\mbox{for}\quad t \ge 1. 
\]
Therefore, for all $t\ge 1$,
\[
\mathbb{P}\left(\frac{1}{n}\sum_{i=1}^n W_i \le \frac{2e}{n}\sum_{i=1}^n \mathbb{E}[W_i] + \frac{4etK_w\log^2n}{n} + \frac{4et^3K_w}{n}\right) \le e^{-t}.
\]
To make this valid over all $t > 0$, we use the fact that probabilities are bounded by 1 and multiply the right hand side by $e$ so that for $t < 1$, $ee^{-t} > 1$. This completes the proof of~\eqref{eq:to-be-proved-theorem-8}.
\end{proof}

\end{appendices}

\end{document}